\numberwithin{equation}{section}
\theoremstyle{plain}
\newtheorem{thm}{Theorem}[section]
\newtheorem{prop}[thm]{Proposition}
\newtheorem{rem}[thm]{Remark}
\theoremstyle{definition}
\newtheorem{exam}[thm]{Example}
\newcommand{\brac}[1]{\left(#1\right)}
\newcommand{\abs}[1]{\left\vert#1\right\vert}
\def\half{\frac 1 2}
\newcommand{\nn}{\nonumber}
\newcommand{\bk}{\mathbf{k}}
\newcommand{\bx}{\mathbf{x}}
\newfont{\iams}{msbm9}
\newcommand{\commentbis}[1]{}
\newcommand{\be}{\begin{eqnarray}}
\newcommand{\ee}{\end{eqnarray}}
\newcommand{\beno}{\begin{eqnarray*}}
\newcommand{\eeno}{\end{eqnarray*}}
\newcommand{\barr}[1]{\begin{array}{#1}}
\newcommand{\earr}{\end{array}}
\newcommand{\beq}{\begin{equation}}
\newcommand{\eeq}{\end{equation}}
\newcommand{\beqa}{\begin{eqnarray}}
\newcommand{\eeqa}{\end{eqnarray}}
\newcommand{\bv}{{\bf v}}
\newcommand{\bV}{{\bf V}}
\newcommand{\jl}{j-\frac 12}
\newcommand{\jr}{j+\frac 12}
\newcommand{\il}{i-\frac 12}
\newcommand{\ir}{i+\frac 12}
\newcommand{\bfv}{\mathbf{V}_h^k}
\newcommand{\bzero}{\mathbf{0}}
\newcommand{\bone}{\mathbf{1}}
\newcommand{\bl}{\mathbf{l}}
\newcommand{\bi}{\mathbf{i}}
\newcommand{\bj}{\mathbf{j}}
\newcommand{\bW}{\mathbf{W}}
\DeclareMathOperator{\sech}{sech}
\DeclareMathOperator{\arccot}{arccot}
\begin{document}
\baselineskip=1.6pc

\vspace{.5in}

\begin{center}

{\large\bf A class of adaptive multiresolution ultra-weak discontinuous Galerkin methods for some nonlinear dispersive wave equations}

\end{center}

\vspace{.1in}

\centerline{
Juntao Huang \footnote{Department of Mathematics,
Michigan State University, East Lansing, MI 48824, USA.
E-mail: huangj75@msu.edu} \qquad
Yong Liu \footnote{LSEC, Institute of Computational Mathematics, Hua Loo-Keng Center for Mathematical
Sciences, Academy of Mathematics and Systems Science, Chinese Academy of Sciences, Beijing
100190, China.
E-mail address: yongliu@lsec.cc.ac.cn.
Research is supported by the fellowship of China Postdoctoral Science Foundation No. 2020TQ0343.
} \qquad
Yuan Liu\footnote{Department of Mathematics, Statistics and Physics,
Wichita State University, Wichita, KS 67260, USA.
E-mail: liu@math.wichita.edu.
Research supported in part by a grant from the Simons Foundation (426993, Yuan Liu). }\qquad
Zhanjing Tao  \footnote{School of Mathematics, Jilin University, Changchun, Jilin 130012, China. zjtao@jlu.edu.cn. Research is supported by NSFC grant 12001231.}  \qquad
Yingda Cheng  \footnote{Department of Mathematics, Department of Computational Mathematics, Science and Engineering, Michigan State University, East Lansing, MI 48824, USA. E-mail: ycheng@msu.edu. Research is supported by NSF grant DMS-2011838.}
}

\vspace{.4in}

\centerline{\bf Abstract}

\vspace{.1in}
In this paper, we propose a class of adaptive multiresolution (also called adaptive sparse grid) ultra-weak discontinuous Galerkin (UWDG) methods for solving some nonlinear dispersive wave equations including the Korteweg-de Vries (KdV) equation and its two dimensional generalization, the Zakharov-Kuznetsov (ZK) equation.  The UWDG formulation, which relies on repeated integration by parts, was proposed for KdV equation  in \cite{cheng2008discontinuous}. For the ZK equation which contains mixed derivative terms, we develop a new UWDG formulation. The $L^2$ stability and the optimal error estimate with a novel local projection are established for this new scheme on regular meshes. Adaptivity   is achieved based on multiresolution and is particularly effective for capturing solitary wave structures. Various numerical examples are presented to demonstrate the accuracy and capability of our methods.
\bigskip

\bigskip

{\bf Key Words:} sparse grid; discontinuous Galerkin; dispersive equations; multiresolution; adaptive; error estimate


\pagenumbering{arabic}

\section{Introduction}\label{sec:intro}
\setcounter{equation}{0}
\setcounter{figure}{0}
\setcounter{table}{0}

In this paper, we are interested in numerically solving dispersive equations including the Korteweg-de Vries (KdV) equation in one-dimension (1D) \cite{korteweg1895}
\begin{equation}\label{eq:kdv}
	u_t + f(u)_x + u_{xxx} = 0,
\end{equation}
and the Zakharov-Kuznetsov (ZK) equation in two-dimension (2D) \cite{zakharov1974threedimensional}
\begin{equation}\label{eq:zk}
	u_t + f(u)_x + u_{xxx} + u_{xyy} = 0.
\end{equation}
Our work can be easily generalized to 3D ZK equation. For simplicity of discussion, we did not include it in this paper.

The KdV-type equations first arise in the study of shallow wave propagations, and later are widely used to describe the propagation of waves in a variety of nonlinear and dispersive media. In the literature, these equations have wide applications in scientific fields including acoustics, nonlinear optics, hydromagnetics, and among others \cite{benjamin1972model}. As an important model in the family of  KdV-type equations, the ZK equation \eqref{eq:zk} is a high dimensional generalization of the 1D KdV equation \eqref{eq:kdv}, which governs the behavior of weakly nonlinear ion-acoustic waves in a plasma comprising cold ions and hot isothermal electrons in the presence of a uniform magnetic field. It was found that the solitary wave solutions of the ZK equation are inelastic \cite{Wazwaz2009}.

A vast amount of numerical methods have been developed to solve KdV-type equations, including finite difference method, finite volume method, finite element method as well as spectral method. In this paper, we are interested in further exploring discontinuous Galerkin (DG) method, which is a class of finite element method using discontinuous piecewise polynomials for numerical solutions \cite{cockburn2001runge,cockburn2012discontinuous}. The main advantages of DG method include flexibility in handling geometry, provable convergence properties and accommodating $h$-$p$ adaptivity. Various DG schemes have been applied to solve the KdV-type equations.  In \cite{yan2002local, xu2005local, xu2007error}, local DG (LDG) schemes are investigated to solve the KdV-type equations, including the ZK equation. Hybridizable DG (HDG) is used to solve 1D KdV equations in \cite{dong2017optimally}. In contrast of LDG schemes which need to introduce auxiliary variables and rewrite the equation into first order system, direct DG (DDG) schemes \cite{yi2013direct} and ultra-weak DG (UWDG) schemes  \cite{cheng2008discontinuous, bona2013conservative, fu2019energy} are developed to simulate the 1D KdV equation.  While optimal $L^2$-error estimates of various types of semi-discrete DG schemes are obtained for the 1D KdV equation \cite{xu2012optimal, dong2017optimally, fu2019energy, bona2013conservative} using various projection techniques, only sub-optimal error estimates of LDG schemes are available for the ZK equation \cite{xu2007error}. Also, the ZK equation, which contains   mixed derivative term, has not been considered yet under the UWDG framework.  One aim of this work is to design stable and accurate UWDG scheme for the ZK equation, which can also shed light on the design of the UWDG formulation for general mixed derivative terms. 
We showed that by applying  integration by parts in a proper order and continuing to integrate on the trace of elements until the technique can no longer be used, the resulting weak formulation will yield a $L^2$ stable scheme.
Further,  a novel local projection can be designed to prove  optimal $L^2$ error estimate on regular meshes.

KdV-type equations admit solitary wave solutions, which means that the solutions often contain localized structures. This makes adaptivity very appealing in efficient numerical simulations, which is another aim of this work.
In particular, this paper continues our line of research \cite{guo2017adaptive, huang2019adaptive, huang2020adaptive, tao2020Schrodinger, guo2020hj}   on the development of adaptive multiresolution DG methods. The adaptivity in our proposed schemes is realized through the hierarchical polynomial spaces and multiresolution analysis (MRA), which has built in error indicators. By further incorporating the sparse tensor product in 2D space, adaptive multiresolution  UWDG schemes for the ZK equation are capable of capturing solitary waves efficiently. Two classes of multiwavelets are employed in our schemes to attain MRA. First, the Alperts orthonormal multiwavelets are adopted as the DG basis functions \cite{wang2016elliptic}, and then the interpolatory multiwavelets \cite{tao2021collocation} are introduced to deal with the nonlinear terms in the scheme.

The organization of the paper is as follows. In Section \ref{sec:zk}, we review the UWDG method for the KdV equation and present our new UWDG method for the ZK equation. The $L^2$ stability and the optimal error estimate on the ZK equation will be provided. In Section \ref{sec:method}, we review the fundamentals of Alpert's and interpolatory wavelets and propose adaptive multiresolution UWDG schemes for both the KdV equation and the ZK equation. In Section \ref{sec:result}, we provide several numerical examples to illustrate the capability of our proposed adaptive multiresolution UWDG method. Concluding remarks are given in Section \ref{sec:conclusion}.

\section{Semi-discrete UWDG schemes}\label{sec:zk}

In this section, we will present the UWDG method for the KdV equation and the ZK equation on full grid, which serve as the building blocks of the adaptive multiresolution UWDG methods. Specifically, we review the UWDG method proposed in \cite{cheng2008discontinuous} for the KdV equation.
Moreover, we develop a new UWDG method for the ZK equation with proofs on $L^2$ stability and optimal error estimates.

\subsection{KdV equation}
Let the computational domain be the interval $\Omega\subset\mathbb{R}$, which is divided into $N$ intervals $\Omega=\bigcup_{i=1}^{N_x}I_i$ with $I_i = (x_{i-\frac{1}{2}}, x_{i+\frac{1}{2}})$, $x_i = \frac{1}{2} (x_{i-\frac{1}{2}} + x_{i+\frac{1}{2}})$, $h_i = x_{i+\frac{1}{2}} - x_{i-\frac{1}{2}}$, and $h = \max_i h_i$. Define the DG finite element space as
\begin{equation}\label{eq:dg-space}
	V^k_h=\{v\in L^2(\Omega):v|_{I_i}\in P^k(I_i),\, \forall i = 1,\ldots,N_x \},
\end{equation}
where $P^k(I_i)$ represents all polynomials of degree at most $k$ on $I_i$. Then, the semi-discrete UWDG scheme proposed in \cite{cheng2008discontinuous} is based on repeated integration by parts, and to find $u^h \in V^k_h$, such that for any test function $v^h \in V^k_h$, there holds the equality:
\begin{align}\label{eq:kdv-weak-formulation0} \nonumber
& \int_{\Omega} (u^h)_t v^h dx = \int_{\Omega} f(u^h)v^h_x dx +\int_{\Omega} u^h v^h_{xxx} dx
-\sum_{i=1}^{N_x} ( \hat{f}_{i+\frac{1}{2}} (v^h)^-_{i+\frac{1}{2}}- \hat{f}_{i-\frac{1}{2}} (v^h)^+_{i-\frac{1}{2}})  \\ \nonumber &
 - \sum_{i=1}^{N_x} (\hat{u}^h_{i+\frac{1}{2}} (v^h_{xx})^-_{i+\frac{1}{2}} -\hat{u}^h_{i-\frac{1}{2}} (v^h_{xx})^+_{i-\frac{1}{2}} )
 +\sum_{i=1}^{N_x} ( (\widetilde{u}^h_x)_{i+\frac{1}{2}}(v^h_x)^-_{i+\frac{1}{2}} - (\widetilde{u}^h_x)_{i-\frac{1}{2}}(v^h_x)^+_{i-\frac{1}{2}}  ) \\&
 -\sum_{i=1}^{N_x} ( (\check{u}^h_{xx})_{i+\frac{1}{2}} (v^h)^-_{i+\frac{1}{2}}  - (\check{u}^h_{xx})_{i-\frac{1}{2}} (v^h)^+_{i-\frac{1}{2}}).
\end{align}
Here, $\hat{f}$, $\hat{u}^h$, $\widetilde{u}^h_x$ and  $\check{u}^h_{xx}$ are numerical fluxes and will be chosen as follows.
For the convection term, the global Lax-Friedrichs flux
$$\hat{f}((u^h)^+, (u^h)^-) = \frac{1}{2} (f((u^h)^+) + f((u^h)^-)) -\alpha ((u^h)^+ - (u^h)^-))$$
with $\alpha = \max\abs{f'(u)}$ is used.
Numerical fluxes about $u$ are taken by following the alternating principle \cite{cheng2008discontinuous}
\begin{align}\label{kdv3_00}
\hat{u}_{i+\frac{1}{2}}^h = u^h(x_{i+\frac{1}{2}}^-), \qquad (\widetilde{u}^h_x)_{i+\frac{1}{2}} = u^h_x(x_{i+\frac{1}{2}}^+), \qquad (\check{u}^h_{xx})_{i+\frac{1}{2}} = u^h_{xx}(x_{i+\frac{1}{2}}^+)
\end{align}
or alternatively
\begin{align}\label{kdv3_01}
\hat{u}_{i+\frac{1}{2}}^h = u^h(x_{i+\frac{1}{2}}^+), \qquad (\widetilde{u}^h_x)_{i+\frac{1}{2}} = u_x^h(x_{i+\frac{1}{2}}^+), \qquad (\check{u}^h_{xx})_{i+\frac{1}{2}} = u_{xx}^h(x_{i+\frac{1}{2}}^-).
\end{align}

As shown in \cite{bona2013conservative, fu2019energy}, optimal error estimate can be obtained for the semi-discrete the scheme (\ref{eq:kdv-weak-formulation0}), when no convection term presents in (\ref{eq:kdv}). Also energy conserving schemes can be designed by switching the fluxes \eqref{kdv3_00}-\eqref{kdv3_01} to central fluxes.

\subsection{ZK equation}\label{sec.zk}
In this part, we present a new ultra-weak DG scheme for solving the ZK equation \eqref{eq:zk}. For simplicity, we only consider periodic boundary condition here and similar results can be obtained for Dirichlet boundary conditions.

Given a rectangular domain $\Omega\subset\mathbb{R}^2$, we have a shape regular partition of $\Omega$ into rectangular cells $\Omega=\bigcup_{i,j}K_{i,j}$ with $K_{i,j}=I_i\times J_j=[x_{\il},x_{\ir}]\times[y_{\jl},y_{\jr}]$ for $i=1\ldots,N_x$ and $j=1\ldots,N_y$.
Let $x_{i}=\frac{1}{2}(x_{\il}+x_{\ir})$, $y_j=\frac 12 (y_{\jl}+y_{\jr})$, $h_{x_i}=(x_{\ir}-x_{\il})$, $h_{y_j}=(y_{\jr}-y_{\jl})$ and $h=\max_{i,j}(h_{x_i},h_{y_j})$. The DG finite element space is defined as
\begin{equation}
\bfv =\{v\in L^2(\Omega):v|_{K_{i,j}}\in Q^k(K_{i,j}),\, \forall i=1\ldots,N_x, ~ j=1\ldots,N_y \},	
\end{equation}
where $Q^k(K_{i,j})$ denotes the space of tensor product polynomials of degree at most $k$ in each dimension.

The main idea of UWDG scheme is to repeatedly apply integration by parts so all the spatial derivatives are shifted from the solution to the test function in the weak formulations. Due to the existence of the mixed derivative in the ZK equation, we apply integration by part in a proper order and continue to integrate on the trace of elements until the integration by part can no longer be used. This will result in some terms involving the vertices of each element which does not appear in the previous DG method. Specifically, we propose our new UWDG scheme for \eqref{eq:zk} as follows: find $u^h \in \bfv$, such that for any test function $v^h \in \bfv$,
\begin{align}\label{zk3.1.2}
 \int_{\Omega} u^h_t (v^h) dxdy  ={}& \int_{\Omega} f(u^h)v^h_x dxdy + \int_{\Omega} u^h v^h_{xxx} dxdy +\int_{\Omega} u^h v^h_{xyy} dxdy \\ \nonumber
 & -\sum_{i=1}^{N_x}\sum_{j=1}^{N_y} \int_{J_j} \hat{f}_{i+\frac{1}{2}, y} (v^h)^-_{i+\frac{1}{2}, y} -\hat{f}_{i-\frac{1}{2}, y} (v^h)^+_{i-\frac{1}{2}, y} dy \\ \nonumber
 & -\sum_{i=1}^{N_x}\sum_{j=1}^{N_y} \int_{J_j} (\check{u}^h_{xx})_{i+\frac{1}{2}, y} (v^h)^-_{i+\frac{1}{2}, y} - (\check{u}^h_{xx})_{i-\frac{1}{2}, y} (v^h)^+_{i-\frac{1}{2}, y} dy \\ \nonumber
 & + \sum_{i=1}^{N_x}\sum_{j=1}^{N_y} \int_{J_j}  (\widetilde{u}^h_{x})_{i+\frac{1}{2}, y} (v^h_x)^-_{i+\frac{1}{2}, y} - (\widetilde{u}^h_{x})_{i-\frac{1}{2}, y} (v^h_x)^+_{i-\frac{1}{2}, y} dy \\ \nonumber
 & - \sum_{i=1}^{N_x}\sum_{j=1}^{N_y} \int_{J_j}  \hat{u}^h_{i+\frac{1}{2}, y} (v^h_{xx})^-_{i+\frac{1}{2}, y} - \hat{u}^h_{i-\frac{1}{2}, y} (v^h_{xx})^+_{i-\frac{1}{2}, y} dy \\ \nonumber
 & + \sum_{i=1}^{N_x}\sum_{j=1}^{N_y} \int_{I_i} (\check{u}^h_{y})_{x, j+\frac{1}{2}} (v^h_x)^-_{x, j+\frac{1}{2}} - (\check{u}^h_{y})_{x, j-\frac{1}{2}} (v^h_x)^+_{x, j-\frac{1}{2}} dx\\ \nonumber
 & +\sum_{i=1}^{N_x}\sum_{j=1}^{N_y}-(\check{u}^h_{y})_{\ir,\jr}(v^h)^{-,-}_{\ir,\jr}+(\check{u}_y^h)_{\il,\jr}(v^h)_{\il,\jr}^{+,-} \\ \nonumber
 & +\sum_{i=1}^{N_x}\sum_{j=1}^{N_y}(\check{u}_y^h)_{\ir,\jl}(v^h)_{\ir,\jl}^{-,+}-(\check{u}_y^h)_{\il,\jl}(v^h)_{\il,\jl}^{+,+}\\\nonumber
&- \sum_{i=1}^{N_x}\sum_{j=1}^{N_y} \int_{J_j}  (\widetilde{u}^h)_{i+\frac{1}{2}, y} (v^h_{yy})^-_{i+\frac{1}{2}, y} - (\widetilde{u}^h)_{i-\frac{1}{2}, y} (v^h_{yy})^+_{i-\frac{1}{2}, y} dy \\ \nonumber
&+ \sum_{i=1}^{N_x} \sum_{j=1}^{N_y}  (\widetilde{u}^h)_{\ir,\jr}(v^h_y)_{\ir,\jr}^{-,-}-(\widetilde{u}^h)_{\ir,\jl} (v^h_y)_{\ir,\jl}^{-,+}\\\nonumber
&+ \sum_{i=1}^{N_x} \sum_{j=1}^{N_y} -(\widetilde{u}^h)_{\il,\jr}(v^h_y)_{\il,\jr}^{+,-}+(\widetilde{u}^h)_{\il,\jl}(v^h_y)_{\il,\jl}^{+,+} \nonumber\\
& - \sum_{i=1}^{N_x}\sum_{j=1}^{N_y} \int_{I_i}  (\hat{u}^h)_{x, j+\frac{1}{2}} (v^h_{xy})^-_{x, j+\frac{1}{2}} - (\hat{u}^h)_{x, j-\frac{1}{2}} (v^h_{xy})^+_{x, j-\frac{1}{2}} dx,\nonumber
\end{align}
Here, the numerical fluxes on the element interfaces are chosen as
\begin{align}\label{zk3.1.3.1}
& (\check{u}^h_{xx})_{i+\frac{1}{2}, y} = u^h_{xx}(x_{\ir}^+,y), \qquad (\widetilde{u}^h_x)_{i+\frac{1}{2}, y} = u^h_x(x_{\ir}^+, y), \qquad (\hat{u}^h)_{i+\frac{1}{2}, y} = u^h(x_{\ir}^-,y),\nonumber\\
& (\check{u}^h_{y})_{x, j+\frac{1}{2}} = u^h_{y}(x,y_{\jr}^+), \qquad (\widetilde{u}^h)_{i+\frac{1}{2}, y} = u^h(x_{\ir}^+, y), \qquad (\hat{u}^h)_{x, j+\frac{1}{2}} = u^h(x, y_{\jr}^-),
\end{align}
and the numerical fluxes on the element vertices are taken to be
\begin{align}\label{zk3.1.3.2}
(\check{u}^h_{y})_{\ir, \jr}=u^h_{y}(x_{\ir}^-,y_{\jr}^+), \qquad (\widetilde{u}^h)_{\ir,\jr}=u^h(x_{\ir}^+,y_{\jr}^-),
\end{align}
and $\hat{f}$ is taken as the global Lax-Friedrichs flux. The notations in \eqref{zk3.1.2} for $v^h$ are
\begin{align}\label{zk3.1.4}
(v^h)_{\ir,y}^{\pm} := v^h(x_{\ir}^{\pm},y), \quad (v^h)_{x, \jr}^{\pm} := v^h(x, y_{\jr}^{\pm}),\quad (v^h)_{\ir,\jr}^{\pm,\pm} := v^h(x_{\ir}^{\pm},y_{\jr}^{\pm}).
\end{align}

\begin{rem}
The numerical fluxes can also be alternatively chosen as
\begin{align}\label{zkremk}
& (\check{u}^h_{xx})_{i+\frac{1}{2}, y} = u^h_{xx}(x_{\ir}^-,y), \qquad (\widetilde{u}^h_x)_{i+\frac{1}{2}, y} = u^h_x(x_{\ir}^+, y), \qquad (\hat{u}^h)_{i+\frac{1}{2}, y} = u^h(x_{\ir}^+,y),\nonumber\\
& (\check{u}^h_{y})_{x, j+\frac{1}{2}} = u^h_{y}(x, y_{\jr}^-), \qquad
(\check{u}^h_{y})_{\ir, \jr}=u^h_{y}(x_{\ir}^-,y_{\jr}^-),\nonumber\\ &(\widetilde{u}^h)_{i+\frac{1}{2}, y} = u^h(x_{\ir}^+, y), \qquad (\widetilde{u}^h)_{\ir,\jr}=u^h(x_{\ir}^+,y_{\jr}^+), \nonumber\\
&(\hat{u}^h)_{x, j+\frac{1}{2}} = u^h(x, y_{\jr}^+).
\end{align}
\end{rem}

\subsubsection{$L^2$ stability}

We show the $L^2$ stability on the ultra-weak DG scheme (\ref{zk3.1.2})-(\ref{zk3.1.4}) for ZK equation \eqref{eq:zk} in the following proposition:
\begin{prop}\label{prop:L2-stability}
The numerical solution to the DG scheme (\ref{zk3.1.2})-(\ref{zk3.1.4}) satisfies the $L^2$ stability
\begin{align}
\frac{d}{ dt} \int_{\Omega} (u^h(x, y, t))^2 dxdy \le 0.
\end{align}
\end{prop}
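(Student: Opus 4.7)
The proof plan is the classical energy method: take the test function $v^h = u^h$ in the ultra-weak formulation (\ref{zk3.1.2}), so that the left-hand side becomes $\tfrac{1}{2}\tfrac{d}{dt}\int_{\Omega}(u^h)^2\,dx\,dy$. I would then split the right-hand side into three groups and show that each contributes non-positively after summation over all cells: (i) the convection piece $\int_\Omega f(u^h)u^h_x\,dx\,dy$ together with the Lax-Friedrichs edge flux $\hat f$; (ii) the pure $x$-dispersion piece $\int_\Omega u^h u^h_{xxx}\,dx\,dy$ together with its three associated edge fluxes $\hat u^h$, $\widetilde u^h_x$, $\check u^h_{xx}$ on vertical interfaces; and (iii) the mixed-dispersion piece $\int_\Omega u^h u^h_{xyy}\,dx\,dy$ together with all of its edge fluxes on \emph{both} vertical and horizontal interfaces and, crucially, the four vertex fluxes appearing in (\ref{zk3.1.2}).

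Group (i) is handled by the standard argument: on each cell integrate $f(u^h)u^h_x$ by parts to produce an entropy flux $F(u^h)=\int^{u^h} f(s)\,ds$ at cell boundaries, combine with the global Lax-Friedrichs flux $\hat f$ on vertical edges, and use the monotonicity of $\hat f$ to bound the total convection contribution by $-\alpha \sum_{i,j}\int_{J_j}\bigl(u^{h,+}-u^{h,-}\bigr)_{\ir,y}^{2}\,dy\le 0$. Horizontal interfaces contribute nothing since no flux in the $y$-direction is involved in the convection term.

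For groups (ii) and (iii), I would apply integration by parts within each cell to rewrite the volume integrals purely as edge and vertex expressions, using the identities
\begin{align*}
\int_{K_{i,j}} u^h u^h_{xxx}\,dx\,dy &= \int_{J_j}\bigl[u^h u^h_{xx} - \tfrac{1}{2}(u^h_x)^2\bigr]_{x_{\il}^+}^{x_{\ir}^-}\,dy,\\
\int_{K_{i,j}} u^h u^h_{xyy}\,dx\,dy &= \int_{I_i}\bigl[u^h u^h_{xy}\bigr]_{y_{\jl}^+}^{y_{\jr}^-}\,dx - \tfrac{1}{2}\int_{J_j}\bigl[(u^h_y)^2\bigr]_{x_{\il}^+}^{x_{\ir}^-}\,dy.
\end{align*}
Adding these to the numerical-flux terms already present in (\ref{zk3.1.2}), summing over all cells, and reorganizing edge by edge, I expect every interior-edge contribution to reassemble into a single jump expression whose sign is controlled by the alternating flux choice (\ref{zk3.1.3.1}). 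The deliberate pairing $\hat u^h=u^{h,-}$ versus $\check u^h_{xx}=u^{h,+}_{xx}$, together with $\widetilde u^h_x=u^{h,+}_x$ and the analogous prescriptions for the $y$-fluxes, is precisely what makes the sum of these interior-edge terms telescope to zero under periodic boundary conditions.

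The main obstacle, and what distinguishes this proof from the one-dimensional KdV case of \cite{cheng2008discontinuous}, is accounting for the pointwise contributions at the four corners of each cell generated by the mixed derivative $u^h_{xyy}$. After the integration by parts above, a further one-dimensional integration by parts in $x$ must be applied to the horizontal-edge integrand $\int_{I_i}(\check u^h_y)(v^h_x)^{\pm}\,dx$, which produces exactly the vertex quantities $(\check u^h_y u^h)|_{\ir,\jr}$ and analogues at the other three corners; each interior vertex then receives contributions from four adjacent cells. Verifying that the vertex-flux prescription (\ref{zk3.1.3.2}) makes this four-fold sum telescope to zero under periodic boundary conditions is the subtle bookkeeping step. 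Once this is confirmed, groups (ii) and (iii) contribute exactly $0$, group (i) contributes a non-positive term, and the stated inequality follows.
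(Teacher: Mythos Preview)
Your plan is essentially the paper's proof: set $v^h=u^h$ in (\ref{zk3.1.2}), treat the convection block via the entropy $F(u)=\int^u f$ and the monotonicity of the Lax--Friedrichs flux, and check that the vertex contributions arising from the mixed derivative cancel after summing over the four cells sharing each interior vertex.  One point needs correction, though.  Groups (ii) and (iii) do \emph{not} telescope to exactly~$0$ under the alternating fluxes (\ref{zk3.1.3.1})--(\ref{zk3.1.3.2}).  After the vertex cancellations (which do indeed net to zero, as you anticipate) and after collapsing all edge sums, the paper's calculation arrives at the identity
\[
\frac{1}{2}\frac{d}{dt}\int_\Omega (u^h)^2\,dx\,dy
+\sum_{i,j}\int_{J_j}\!\Bigl([F(u^h)]-\hat f\,[u^h]\Bigr)_{\ir,y}dy
+\frac{1}{2}\sum_{i,j}\int_{J_j}\![u^h_x]_{\ir,y}^{2}\,dy
+\frac{1}{2}\sum_{i,j}\int_{J_j}\![u^h_y]_{\ir,y}^{2}\,dy
=0,
\]
so the two dispersion blocks leave the strictly non-negative residuals $\tfrac12[u^h_x]^2$ and $\tfrac12[u^h_y]^2$ on the left (equivalently, non-positive on the right).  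This is the numerical dissipation produced by the upwind/downwind pairing of $\hat u^h$, $\widetilde u^h_x$, $\check u^h_{xx}$, $\widetilde u^h$, $\check u^h_y$; the fluxes do not make the interior-edge sums ``telescope to zero'' but rather collapse them to a sum of squared jumps.  Your conclusion is unaffected since these terms only help the inequality, but the mechanism you describe is not quite the one at work.
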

\begin{proof}	
By taking $v = u^h$ in \eqref{zk3.1.2}, we obtain
\begin{align}
0 = & \int_{\Omega} u^hu^h_t dxdy - \int_{\Omega} f(u^h)u^h_x dxdy + \sum_{i=1}^{N_x}\sum_{j=1}^{N_y} \int_{J_j} \hat{f}_{i+\frac{1}{2}, y} u^h(x_{\ir}^-,y) - \hat{f}_{i-\frac{1}{2}, y}  u^h(x_{\il}^+,y) dy \nonumber\\
& +  \int_{K_{i, j}} u^h_xu^h_{xx} dxdy
 - \int_{J_j} u^h_x(x_{\ir}^+, y)u^h_x(x_{\ir}^-, y) - u^h_x(x_{\il}^+, y) u^h_x(x_{\il}^+, y) dy \nonumber\\
 &- \int_{K_{i, j}} u^h u^h_{xyy} dxdy+\int_{I_i}u_{xy}^h(x,y_{\jr}^+)u^h(x,y_{\jr}^-)-u_{xy}^h(x,y_{\jl}^+)u^h(x,y_{\jl}^+)\, dx\nonumber\\
&-(u_y^h(x_{\il}^-,y_{\jr}^+)-u_y^h(x_{\il}^+,y_{\jr}^+))u^h(x_{\il}^+,y_{\jr}^-)\nonumber\\
&+(u_y^h(x_{\il}^-,y_{\jl}^+)-u_y^h(x_{\il}^+,y_{\jl}^+))u^h(x_{\il}^+,y_{\jl}^+)\nonumber\\
&-\int_{J_j}u_y^h(x_{\ir}^+,y)u^h_y(x_{\ir}^-,y)-u_y^h(x_{\il}^+,y)u^h_y(x_{\il}^+,y)\,dy\nonumber\\
&-(u^h(x_{\ir}^+,y_{\jl}^+)-u^h(x_{\ir}^+,y_{\jl}^-))u^h_y(x_{\ir}^-,y_{\jl}^+)\nonumber\\
&+(u^h(x_{\il}^+,y_{\jl}^+)-u^h(x_{\il}^+,y_{\jl}^-))u^h_y(x_{\il}^+,y_{\jl}^+)\nonumber\\
&+\int_{I_i}u^h(x,y_{\jr}^-)u^h_{xy}(x,y_{\jr}^-)-u^h(x,y_{\jl}^-)u^h_{xy}(x,y_{\jl}^+)\, dx.\nonumber\\
= & \int_{\Omega} u^hu^h_t dxdy - \int_{\Omega} f(u^h)u^h_x dxdy + \sum_{i=1}^{N_x}\sum_{j=1}^{N_y} \int_{J_j} \hat{f}_{i+\frac{1}{2}, y}  u^h(x_{\ir}^-,y) - \hat{f}_{i-\frac{1}{2}, y}  u^h(x_{\il}^+,y) dy  \nonumber\\
&+  \int_{J_j} \frac{1}{2}  [u^h_x]^2_{i+\frac{1}{2}, y} dy+ \int_{K_{i,j}}u^h_yu^h_{xy}\,dx dy\nonumber\\
&- \int_{J_j}u_y^h(x_{\ir}^+,y)u^h_y(x_{\ir}^-,y)-u_y^h(x_{\il}^+,y)u^h_y(x_{\il}^+,y)\,dy. \nonumber
\end{align}
Denote $F(u) = \int^u f(u) du$, we can further get
\begin{align}\label{zk5}
& \frac{d}{dt} \int_{\Omega} \frac{1}{2} (u^h)^2 dxdy + \sum_{i=1}^{N_x}\sum_{j=1}^{N_y} \int_{J_j} [ F(u^h)]_{i+\frac{1}{2}, y} - \hat{f}_{i+\frac{1}{2}, j} [u^h]_{i+\frac{1}{2}, y} dy \\ \nonumber
& ~ + \int_{J_j} \frac{1}{2} [ u^h_x] ^2_{i+\frac{1}{2}, y} dy + \int_{J_j}\frac{1}{2} [u^h_y] ^2_{i+\frac{1}{2}, y} dy = 0
\end{align}
with $[u] := u^+ - u^-$. Notice that $ [F(u)] -\hat{f} [ u ] \ge 0 $ due to the monotonicity of numerical flux $\hat{f}$. Therefore, we obtain
\begin{align}\label{zk6}
 \frac{d}{dt} \int_{\Omega} \frac{1}{2} (u^h)^2 dxdy  \le 0
\end{align}
and it completes the proof.
\end{proof}

\subsubsection{Optimal error estimate}
In this subsection, we will prove the optimal error estimate of the UWDG scheme (\ref{zk3.1.2})-(\ref{zk3.1.4}) for the following simplified ZK equation
\begin{align}\label{zk_eq_err}
&u_t+u_{xyy}=0,
\end{align}
with periodic boundary conditions.

In this simplified case, the semi-discrete scheme (\ref{zk3.1.2})-(\ref{zk3.1.4}) reduces to: find $u^h\in \bfv$, such that for any test function $v^h \in \bfv$
\begin{align}\label{uwdg_zk}
&\int_{\Omega}u_t^hv^h\,dx dy=\sum_{i=1}^{N_x}\sum_{j=1}^{N_y}\mathcal{H}_{i,j}(u^h,v^h)
\end{align}
where
\begin{align}\label{hij}
\mathcal{H}_{i,j}(u^h,v^h)=&\int_{K_{i,j}}u^hv^h_{xyy} \, dx dy+\int_{I_i}u_{y}^h(x,y_{\jr}^+)v^h_x(x,y_{\jr}^-)-u_{y}^h(x,y_{\jl}^+)v^h_x(x,y_{\jl}^+)\, dx\nonumber\\
&-u_y^h(x_{\ir}^-,y_{\jr}^+)v^h(x_{\ir}^-,y_{\jr}^-)+u_y^h(x_{\il}^-,y_{\jr}^+)v^h(x_{\il}^+,y_{\jr}^-)\nonumber\\
&+u_y^h(x_{\ir}^-,y_{\jl}^+)v^h(x_{\ir}^-,y_{\jl}^+)-u_y^h(x_{\il}^-,y_{\jl}^+)v^h(x_{\il}^+,y_{\jl}^+)\nonumber\\
&-\int_{J_j}u^h(x_{\ir}^+,y)v^h_{yy}(x_{\ir}^-,y)-u^h(x_{\il}^+,y)v^h_{yy}(x_{\il}^+,y)\,dy\nonumber\\
&+u^h(x_{\ir}^+,y_{\jr}^-)v^h_y(x_{\ir}^-,y_{\jr}^-)-u^h(x_{\ir}^+,y_{\jl}^-)v^h_y(x_{\ir}^-,y_{\jl}^+)\nonumber\\
&-u^h(x_{\il}^+,y_{\jr}^-)v^h_y(x_{\il}^+,y_{\jr}^-)+u^h(x_{\il}^+,y_{\jl}^-)v^h_y(x_{\il}^+,y_{\jl}^+)\nonumber\\
&-\int_{I_i}u^h(x,y_{\jr}^-)v^h_{xy}(x,y_{\jr}^-)-u^h(x,y_{\jl}^-)v^h_{xy}(x,y_{\jl}^+)\, dx.
\end{align}
To obtain the optimal error estimates for the semi-discrete UWDG scheme \eqref{uwdg_zk}-\eqref{hij}, we first introduce a special local projection. For each index $i,j$ and $k\ge1$, we define the projection $\Pi^{\star}: H^2(K_{i,j})\rightarrow Q^k(K_{i,j})$, which satisfies
\begin{align}
&\int_{K_{i,j}}(\Pi^{\star}u-u)\varphi\,dx dy=0,\quad \forall \varphi \in P^{k-1}(I_i)\otimes P^{k-2}(J_j), \label{proj1}\\
&\int_{I_i}(\Pi^{\star}u-u)_y(x,y_{\jl}^+)\varphi(x)\, dx=0 \quad \forall \varphi\in P^{k-1}(I_i), \label{proj2}\\
&\int_{I_i}(\Pi^{\star}u-u)(x,y_{\jr}^-)\varphi(x)\, dx=0 \quad \forall \varphi\in P^{k-1}(I_i), \label{proj3}\\
&\int_{J_j}(\Pi^{\star}u-u)(x_{\il}^+,y)\varphi(y)\, dy=0 \quad \forall \varphi\in P^{k-2}(J_j), \label{proj4}\\
&\Pi^{\star}u(x_{\il}^+,y_{\jr}^-)=u(x_{\il},y_{\jr})\label{proj5}, \\
&(\Pi^{\star}u)_y(x_{\ir}^-,y_{\jl}^+)=u_y(x_{\ir},y_{\jl}). \label{proj6}
\end{align}
The projection $\Pi^{\star}$ is well-defined and holds the optimal approximation property.
\begin{prop}\label{prop:projection}
The projection $\Pi^{\star}$ defined by \eqref{proj1}-\eqref{proj6} on the cell $K_{i,j}$ exists and is unique for any smooth function $u\in H^{k+1}(K_{i,j})$, and the projection has the optimal approximation:
\begin{equation}
	\|u-\Pi^{\star}u\|_{L^2(K_{i,j})} \leq C h^{k+1}\|u\|_{H^{k+1}(K_{i,j})}
\end{equation}
where $C$ is independent of the element $K_{i,j}$ and the mesh size $h$.
\end{prop}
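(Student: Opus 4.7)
The plan is to combine a dimension count with a Legendre-basis uniqueness argument, and then get the approximation bound from a standard reference-cell/Bramble–Hilbert argument. First, I would verify that the number of scalar conditions imposed by \eqref{proj1}--\eqref{proj6} equals $\dim Q^k(K_{i,j}) = (k+1)^2$: the cell-interior moments in \eqref{proj1} contribute $k(k-1)$ conditions, the two bottom/top edge moment families \eqref{proj2}--\eqref{proj3} contribute $k$ each, the left-edge moments \eqref{proj4} contribute $k-1$, and the two vertex conditions \eqref{proj5}--\eqref{proj6} contribute one apiece, for a total of $k(k-1)+k+k+(k-1)+1+1 = (k+1)^2$. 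Hence existence follows from uniqueness of the homogeneous problem, so I would take $u\equiv 0$ and aim to show the only $p\in Q^k(K_{i,j})$ satisfying all conditions is $p\equiv 0$.

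To prove uniqueness, expand $p(x,y)=\sum_{a,b=0}^{k} c_{ab}L_a^x(x)L_b^y(y)$ in scaled Legendre polynomials on $I_i$ and $J_j$. Orthogonality against $P^{k-1}(I_i)\otimes P^{k-2}(J_j)$ in \eqref{proj1} immediately forces $c_{ab}=0$ whenever $a\le k-1$ and $b\le k-2$, leaving $3k+1$ free coefficients supported on the ``edge'' index set $\{a=k\}\cup\{b\in\{k-1,k\}\}$. For each fixed $a\in\{0,\dots,k-1\}$, testing \eqref{proj3} and \eqref{proj2} against $L_a^x$ and using $L_k^x\perp L_a^x$ gives
\begin{equation*}
\begin{pmatrix} L_{k-1}^y(y_{\jr}) & L_k^y(y_{\jr}) \\[2pt] (L_{k-1}^y)'(y_{\jl}) & (L_k^y)'(y_{\jl}) \end{pmatrix}
\begin{pmatrix} c_{a,k-1} \\ c_{ak} \end{pmatrix} = \begin{pmatrix} 0 \\ 0 \end{pmatrix}.
\end{equation*}
Using the endpoint identities $L_m(\pm 1)=(\pm 1)^m$ and $L_m'(\pm 1)=(\pm 1)^{m+1}m(m+1)/2$, the determinant evaluates (up to the positive scaling factor $2/h_{y_j}$) to $(-1)^{k+1}k^2\ne 0$, so $c_{a,k-1}=c_{ak}=0$ for all $a\le k-1$. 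What remains is $p(x,y)=L_k^x(x)A(y)$ with $A\in P^k(J_j)$.

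Next, I would feed $p=L_k^x A$ into the remaining conditions. Condition \eqref{proj4}, combined with $L_k^x(x_{\il})\ne 0$, forces $A\perp P^{k-2}(J_j)$, so $A=\alpha L_{k-1}^y+\beta L_k^y$. Conditions \eqref{proj5} and \eqref{proj6} then yield a second $2\times 2$ system, which after the same endpoint computations reduces to $\alpha+\beta=0$ and $\alpha(k-1)-\beta(k+1)=0$; the determinant is $-2k\ne 0$, so $\alpha=\beta=0$ and $p\equiv 0$. This establishes that $\Pi^\star$ is well-defined and, by construction, preserves $Q^k(K_{i,j})$.

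For the approximation estimate, I would pull back to the reference square $\hat K=[-1,1]^2$ via the affine map $K_{i,j}\to\hat K$. The pulled-back projection $\hat\Pi^\star$ is a bounded linear operator on $H^{k+1}(\hat K)$ (boundedness follows from the Sobolev trace theorem applied to the edge and vertex functionals in \eqref{proj2}--\eqref{proj6}) that fixes $Q^k(\hat K)\supset P^k(\hat K)$. The Bramble–Hilbert lemma then gives $\|u-\hat\Pi^\star u\|_{L^2(\hat K)}\le C|u|_{H^{k+1}(\hat K)}$, and the usual scaling back to $K_{i,j}$ yields the stated bound $\|u-\Pi^\star u\|_{L^2(K_{i,j})}\le C h^{k+1}\|u\|_{H^{k+1}(K_{i,j})}$ with $C$ independent of $K_{i,j}$ and $h$. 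The main obstacle is the uniqueness step: one must organize the elimination so the two $2\times 2$ determinants coming from the Legendre endpoint values/derivatives are visibly nonzero, since without that algebraic identity the projection would fail to be well-defined.
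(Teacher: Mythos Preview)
Your proposal is correct and complete; the uniqueness argument via Legendre expansion and the two $2\times 2$ endpoint determinants checks out, and the Bramble--Hilbert finish is standard.  However, the route differs from the paper's.  The paper does \emph{not} expand in a Legendre basis.  Instead, it first pairs \eqref{proj2} with \eqref{proj6} and \eqref{proj3} with \eqref{proj5} and recognizes each pair as the defining conditions of a one-dimensional Gauss--Radau projection in $x$, which immediately kills the traces $(\Pi^\star u)_y(\cdot,y_{j-1/2}^+)$ and $\Pi^\star u(\cdot,y_{j+1/2}^-)$ on all of $I_i$.  It then writes the remaining polynomial in the integral form $\Pi^\star u(x,y)=\int_y^{y_{j+1/2}}(s-y_{j-1/2})G(x,s)\,ds$ and uses \eqref{proj4} and \eqref{proj1} (with an integration-by-parts trick in $y$) to successively force $G(x_{i-1/2}^+,\cdot)=0$ and then $G\equiv 0$.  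Your approach is more elementary and fully self-contained---no appeal to 1D Gauss--Radau theory and no integral factorization, just explicit endpoint identities for Legendre polynomials---while the paper's argument is shorter once the Gauss--Radau fact is quoted and makes the structural reason for the particular pairing of conditions more transparent.  The approximation step (scaling to a reference cell, boundedness, polynomial preservation, Bramble--Hilbert) is essentially identical in both.
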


\begin{proof}
The proof is given in Appendix \ref{sec:appendix-projection}.
\end{proof}

Then we can prove the following error estimate.
\begin{thm}\label{thm:error-estimate}
Suppose that $u^h$ is the numerical solution of the UWDG scheme \eqref{uwdg_zk}-\eqref{hij} and the exact solution to the ZK equation \eqref{zk_eq_err} $u(x,y,t)\in C^{1}(0,T;H^{k+1}(\Omega))$, then the $L^2$-error satisfies the following estimation
\begin{align}\label{opterror}
\|u(\cdot,T) - u^h(\cdot,T)\|_{L^2(\Omega)} \leq C(T+1)h^{k+1}\sup_{0\leq t\leq T}\|u_t(\cdot,t)\|_{H^{k+1}(\Omega)},
\end{align}
where $k\geq 1$ is the degree of the piecewise tensor product polynomials in finite element spaces $\bfv$, and the constant $C$ only depends on $k$ but is independent of the mesh size $h$.
\end{thm}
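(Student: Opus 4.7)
The plan is to carry out a standard energy-type error analysis built around the tailor-made projection $\Pi^{\star}$. I would decompose the error as
\[
u - u^h = (u-\Pi^{\star}u) - (u^h-\Pi^{\star}u) =: \eta - \xi ,
\]
so that $\eta$ is controlled by Proposition~\ref{prop:projection} and $\xi \in \bfv$ is the part to be estimated in energy norm. Choosing the initial condition $u^h(\cdot,0)=\Pi^{\star}u(\cdot,0)$ kills $\xi(\cdot,0)$ and isolates the growth of the scheme error.

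The next step is to derive Galerkin orthogonality. Since the exact solution $u$ is smooth enough to support all the integrations by parts used to assemble $\mathcal{H}_{i,j}$, the identity
\[
\int_{\Omega} u_t v^h \, dx\,dy = \sum_{i,j}\mathcal{H}_{i,j}(u,v^h) \qquad \forall\, v^h\in\bfv
\]
holds, and subtracting \eqref{uwdg_zk} yields the error equation $\int_\Omega (\eta_t-\xi_t)v^h = \sum_{i,j}\mathcal{H}_{i,j}(\eta-\xi,v^h)$. The crucial step — and the one I expect to be the main obstacle — is showing
\[
\sum_{i,j}\mathcal{H}_{i,j}(\eta,v^h)=0 \qquad \forall\, v^h\in\bfv .
\]
This identity is exactly what the six conditions \eqref{proj1}--\eqref{proj6} are designed for. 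I would verify it term by term in \eqref{hij}: the volume term $\int_{K_{i,j}}\eta\, v^h_{xyy}$ vanishes because $v^h_{xyy}\in P^{k-1}(I_i)\otimes P^{k-2}(J_j)$ and \eqref{proj1} applies; the $\int_{J_j}$ and $\int_{I_i}$ edge integrals with factors $v^h_{yy}$, $v^h_{xy}$, $v^h_x$ land in the polynomial spaces targeted by \eqref{proj2}--\eqref{proj4} once one identifies which neighboring cell supplies the trace implicit in the flux choices \eqref{zk3.1.3.1}--\eqref{zk3.1.3.2}; and the eight vertex terms, after summing over $(i,j)$ and telescoping, produce differences that are annihilated by the point-value conditions \eqref{proj5} and \eqref{proj6}. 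Careful bookkeeping of which cell each one-sided trace belongs to, and how the summation reindexes neighboring vertices, is where the real work lies.

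Once Galerkin orthogonality is in place, the error equation reduces to
\[
\int_{\Omega}\xi_t v^h\,dx\,dy - \sum_{i,j}\mathcal{H}_{i,j}(\xi,v^h) = \int_{\Omega}\eta_t v^h\,dx\,dy .
\]
Taking $v^h=\xi$ and invoking the dissipative identity established in the proof of Proposition~\ref{prop:L2-stability} (specialized to $f\equiv 0$, so the only surviving boundary contributions are the nonnegative jump squares $\tfrac12 [\xi_x]^2$ and $\tfrac12 [\xi_y]^2$), we obtain
\[
\frac{1}{2}\frac{d}{dt}\|\xi\|_{L^2(\Omega)}^2 \leq \int_{\Omega}\eta_t\,\xi\,dx\,dy \leq \|\eta_t\|_{L^2(\Omega)}\,\|\xi\|_{L^2(\Omega)},
\]
hence $\tfrac{d}{dt}\|\xi\|_{L^2(\Omega)}\leq \|\eta_t\|_{L^2(\Omega)}$.

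The conclusion then follows by standard steps. Applying the approximation bound of Proposition~\ref{prop:projection} to $\eta_t = u_t - \Pi^{\star}u_t$ gives $\|\eta_t\|_{L^2(\Omega)} \leq Ch^{k+1}\|u_t\|_{H^{k+1}(\Omega)}$; integrating in time from $0$ to $T$ and using $\xi(\cdot,0)=0$ yields $\|\xi(\cdot,T)\|_{L^2(\Omega)} \leq C T h^{k+1}\sup_{[0,T]}\|u_t\|_{H^{k+1}(\Omega)}$. A triangle inequality together with $\|\eta(\cdot,T)\|_{L^2(\Omega)}\leq C h^{k+1}\|u(\cdot,T)\|_{H^{k+1}(\Omega)}$ and the bound $\|u(\cdot,T)\|_{H^{k+1}}\leq \|u(\cdot,0)\|_{H^{k+1}} + \int_0^T\|u_t\|_{H^{k+1}}\,dt$ absorbs the projection part into the $C(T+1)$ prefactor, producing \eqref{opterror}.
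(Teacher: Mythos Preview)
Your proposal is correct and follows essentially the same approach as the paper: the error splitting around $\Pi^{\star}$, consistency of the scheme, the key identity $\mathcal{H}_{i,j}(u-\Pi^{\star}u,v^h)=0$, the energy estimate via the stability argument, and integration in time are exactly the steps the paper takes. One minor simplification worth noting: no telescoping is needed for the vertex terms, because each flux value of $\eta$ or $\eta_y$ at a vertex is supplied by a single neighboring cell on which \eqref{proj5} or \eqref{proj6} kills it outright, so in fact $\mathcal{H}_{i,j}(\eta,v^h)=0$ holds cell by cell, as the paper states.
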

\begin{proof}
Denote the error by $e:=u-u^h = (u-\Pi^{\star}u) + (\Pi^{\star}u-u^h)$.
Thanks to the consistency of the scheme \eqref{uwdg_zk}, we have the following error equation
\begin{align}\label{errorequ1}
\int_{\Omega}(u-u^h)_tv\,dx dy=\sum_{i=1}^{N_x}\sum_{j=1}^{N_y}\mathcal{H}_{i,j}(u-u^h,v)\quad \forall v\in \bfv.
\end{align}
Thus, we have
\begin{align}\label{errorequ2}
&\int_{\Omega}(\Pi^{\star}u-u^h)_tv\,dx dy-\sum_{i=1}^{N_x}\sum_{j=1}^{N_y}\mathcal{H}_{i,j}(\Pi^{\star}u-u^h,v)\nonumber\\
&=\int_{\Omega}(\Pi^{\star}u-u)_tv\,dx dy-\sum_{i=1}^{N_x}\sum_{j=1}^{N_y}\mathcal{H}_{i,j}(\Pi^{\star}u-u,v)\quad \forall v\in \bfv.
\end{align}
From the definition of the projection $\Pi^{\star}u$ in \eqref{proj1}-\eqref{proj6} and the bilinear form \eqref{hij}, it is easy to verify
\begin{align}
\mathcal{H}_{i,j}(\Pi^{\star}u-u,v)=0, \quad \forall v\in Q^k(K_{i,j}),\forall i,j.
\end{align}
Next, by taking $v=\Pi^{\star}u-u^h\in \bfv$ in \eqref{errorequ2} and applying the Cauchy-Schwarz inequality, we have the estimate
\begin{align}
\frac{1}{2}\frac{d}{dt}\|\Pi^{\star}u-u^h\|^2 &\leq \|\Pi^{\star}u_t-u_t\|\|\Pi^{\star}u-u^h\|\nonumber\\
&\leq Ch^{k+1}\|u_t\|_{H^{k+1}(\Omega)}\|\Pi^{\star}u-u^h\|
\end{align}
Here, we also use the $L^2$ stability in Proposition \ref{prop:L2-stability}.
Finally, by integrating the above equation over $t\in[0,T]$ and using the initial projection, we obtain the optimal error estimate \eqref{opterror}.
\end{proof}

\section{Adaptive multiresolution UWDG schemes}\label{sec:method}

In this section, we will present our adaptive multiresolution UWDG schemes for the KdV equation and the ZK equation.

\subsection{Multiresolution analysis and multiwavelets}\label{sec:mra}

We first review the fundamentals of MRA of DG approximation spaces and the associated multiwavelets. The $L^2$ orthonormal Alpert's multiwavelets \cite{alpert1993class} are presented and will be used later. We also introduce a set of key notations.

Our construction of the UWDG schemes starts with the hierarchical decomposition of the DG finite element space. Without loss of generality, we assume the computational domain to be the unit interval $\Omega=[0,1]$. 
We define a set of nested grids $\Omega_0,\,\Omega_1,\ldots$ on $\Omega$, for which the $n$-th level grid $\Omega_n$ consists of $2^n$ uniform cells:
\begin{equation*}
I_{n}^j=(2^{-n}j, 2^{-n}(j+1)], \quad j=0, \ldots, 2^n-1.
\end{equation*}
The piecewise polynomial space of degree at most $k\ge1$ on grid $\Omega_n$ for $n\ge 0$ is denoted by
\begin{equation}\label{eq:DG-space-Vn}
V_n^k:=\{v\in L^2(\Omega): v \in P^k(I_{n}^j),\, \forall \,j=0, \ldots, 2^n-1\}.
\end{equation}
Observing the nested structure
$$V_0^k \subset V_1^k \subset V_2^k \subset V_3^k \subset  \cdots,$$
we can define the multiwavelet subspace $W_n^k$, $n=1, 2, \ldots $ as the orthogonal complement of $V_{n-1}^k$ in $V_{n}^k$ with respect to the $L^2$ inner product on $[0,1]$, i.e.,
\begin{equation*}
V_{n-1}^k \oplus W_n^k=V_{n}^k, \quad W_n^k \perp V_{n-1}^k.
\end{equation*}
By letting $W_0^k:=V_0^k$, we obtain a hierarchical decomposition $V_n^k=\bigoplus_{0 \leq l \leq n} W_l^k$, i.e., MRA of space $V_n^k$.
A set of orthonormal basis can be defined on $W_l^k$ as follows. When $l=0$, the basis $v^0_{i,0}(x)$, $ i=0,\ldots,k$ are the normalized shifted Legendre polynomials in $[0,1]$. When $l>0$, we will use the Alpert's orthonormal multiwavelets \cite{alpert1993class} as the bases, which have been employed to develop a class of sparse grid DG methods for solving PDEs in high dimensions \cite{wang2016elliptic,guo2016sparse, tao2020Schrodinger,guo2020hj}. In this paper, we adopt the notation
$$v^j_{i,l}(x),\quad i=0,\ldots,k,\quad j=0,\ldots,2^{l-1}-1.$$
for Alpert's multiwavelets.

We then follow a tensor-product approach to construct the hierarchical finite element space in multi-dimensional space.  Denote $\bl=(l_1,\cdots,l_d)\in\mathbb{N}_0^d$ as the mesh level in a multivariate sense, where $\mathbb{N}_0$ denotes the set of nonnegative integers. Then we can define the tensor-product mesh grid $\Omega_\bl=\Omega_{l_1}\otimes\cdots\otimes\Omega_{l_d}$ and the corresponding mesh size $h_\bl=(h_{l_1},\cdots,h_{l_d}).$ Based on the grid $\Omega_\bl$, we denote  $I_\bl^\bj=\{\bx:x_m\in(h_mj_m,h_m(j_{m}+1)),m=1,\cdots,d\}$ as an elementary cell, and
$$\bV_\bl^k:=\{\bv:  \bv \in Q^k(I^{\bj}_{\bl}), \,\,  \bzero \leq \bj  \leq 2^{\bl}-\bone \}= V_{l_1,x_1}^k\times\cdots\times  V_{l_d,x_d}^k$$
as the tensor-product piecewise polynomial space, where $Q^k(I^{\bj}_{\bl})$ represents the collection of polynomials of degree up to $k$ in each dimension on cell $I^{\bj}_{\bl}$.
If we use equal mesh refinement of size $h_N=2^{-N}$ in each coordinate direction, the  grid and space will be denoted by $\Omega_N$ and $\bV_N^k$, respectively.
Based on a tensor-product construction, the multi-dimensional increment space can be  defined as
$$\bW_\bl^k=W_{l_1,x_1}^k\times\cdots\times  W_{l_d,x_d}^k.$$
The basis functions in multi-dimensions are defined as
\begin{equation}\label{eq:multidim-basis}
v^\bj_{\bi,\bl}(\bx) := \prod_{m=1}^d v^{j_m}_{i_m,l_m}(x_m),
\end{equation}
for $\bl \in \mathbb{N}_0^d$, $\bj \in B_\bl := \{\bj\in\mathbb{N}_0^d: \,\mathbf{0}\leq\bj\leq\max(2^{\bl-\mathbf{1}}-\mathbf{1},\mathbf{0}) \}$ and $\mathbf{1}\leq\bi\leq \bk+\mathbf{1}$.

Using the notation of $$
|\bl|_1:=\sum_{m=1}^d l_m, \qquad   |\bl|_\infty:=\max_{1\leq m \leq d} l_m.
$$
and  the same component-wise arithmetic operations and relations   as defined in \cite{wang2016elliptic},  we reach the decomposition
\begin{equation}\label{eq:hiere_tp}
\bV_N^k=\bigoplus_{\substack{ |\bl|_\infty \leq N\\\bl \in \mathbb{N}_0^d}} \bW_\bl^k.
\end{equation}
When $d=2$, it is easy to see that $\bV_N^k$ is the same space of $\bV_h^k$ in Section \ref{sec.zk} with uniform partition and $N_x = N_y =2^N$.
On the other hand, a standard choice of sparse grid   space  \cite{wang2016elliptic, guo2016sparse} is
\begin{equation}
\label{eq:hiere_sg}
\hat{\bV}_N^k=\bigoplus_{\substack{ |\bl|_1 \leq N\\\bl \in \mathbb{N}_0^d}}\bW_\bl^k \subset \bV_N^k.
\end{equation}
We skip the details about the property of the space, but refer the readers to \cite{wang2016elliptic, guo2016sparse}. In Section \ref{subsec:scheme}, we will describe the adaptive scheme which adapts a subspace of $\bV_N^k$ according to the numerical solution, hence offering more flexibility and efficiency.

For nonlinear convection terms in the KdV equation \eqref{eq:kdv} and the ZK equation \eqref{eq:zk}, we use the interpolatory multiwavelets based on Hermite interpolations introduced in \cite{tao2021collocation}. The treatment of the nonlinear convection terms is the same as that in the adaptive multiresolution DG scheme for solving conservation laws in \cite{huang2019adaptive}. For saving space, we omit the details and refer readers to \cite{tao2021collocation,huang2019adaptive}.

\subsection{Semi-discrete schemes}\label{subsec:scheme}
Based on the construction of MRA of DG approximation space, we are ready to present the adaptive multiresolution UWDG schemes for simulating the KdV equation \eqref{eq:kdv} and the ZK equation \eqref{eq:zk}. For illustrative purposes, we first introduce some basis notation about jumps and averages for piecewise functions defined on a grid $\Omega_N$. Denote by $\Gamma$ the union of the boundaries for all the elements in the partition $\Omega_N$. In 2D case, $\Gamma$ is further decomposed into two parts: $\Gamma = \Gamma_x \cup \Gamma_y$ with $\Gamma_x$ and $\Gamma_y$ are the union of the boundaries in $x$- and $y$- directions, respectively.
The jump and average of $q\in L^2(\Gamma)$ and $\textbf{q}\in [L^2(\Gamma)]^d$ are defined as follows. Suppose $e$ is an edge (degenerate to a point in 1D) shared by elements $T^+$ and $T^-$, we define the unit normal vectors $\textbf{n}^+$ and $\textbf{n}^-$ on $e$ pointing exterior to $T^+$ and $T^-$, and then
\begin{align}
[q]=q^- \textbf{n}^- + q^+ \textbf{n}^+, \qquad & \{ q\} = \frac{1}{2} (q^-+q^+),  \nonumber  \\
[\textbf{q}] = \textbf{q}^- \cdot \textbf{n}^-  + \textbf{q}^+ \cdot \textbf{n}^+ ,  \qquad & \{ \textbf{q}\} = \frac{1}{2}(\textbf{q}^- + \textbf{q}^+). \nonumber
\end{align}
 Moreover, in 2D case, we denote $S$ the set of all the vertices in the partition $\Omega_N$. For any $p = (x_p, y_p) \in S$, we denote
\begin{equation}\label{zk_4.2.0}
\{[v]\}_p = -v(x_p^-, y_p^-)-v(x_p^+, y_p^+) + v(x_p^-, y_p^+) + v(x_p^-, y_p^+)
\end{equation}
which will be used for the special treatment in the UWDG scheme for the ZK equation.

The semi-discrete multiresolution UWDG scheme for KdV equation is to find $u^h \in \bV$, such that for any test function $v^h \in \bV$,
\begin{align}\label{eq:kdv-weak-formulation}
\int_{\Omega} (u^h)_t v^h dx ={}& \int_{\Omega} \mathcal{I}_h[f(u^h)] v^h_x dx + \sum_{e\in\Gamma} \left(\mathcal{I}_h[\hat{f}(u^h)] [v^h]  \right)_e + \int_{\Omega} u^h v^h_{xxx} dx \nonumber \\
& + \sum_{e\in\Gamma} \left(\hat{u}^h [v^h_{xx}] \right)_e - \sum_{e\in\Gamma} \left( (\widetilde{u}^h_x)[v^h_x]  \right)_e + \sum_{e\in\Gamma} \left((\check{u}^h_{xx}) [v^h] \right)_e,
\end{align}
where the choices of $\hat{f}$, $\hat{u}^h$, $\widetilde{u}^h_x$ and  $\check{u}^h_{xx}$ are the same as in (\ref{kdv3_00})-(\ref{kdv3_01}). Here, $\bV$ is a subspace of $\bV_N^k$ which dynamically evolves over time \cite{guo2017adaptive}. The adaptive procedure follows the technique developed in \cite{guo2017adaptive} to determine the UWDG function space that dynamically evolves over time. The main idea is that in light of the distinguished property of the orthonormal multiwavelets, we keep track of multiwavelet coefficients as a natural error indicator for refining and coarsening, aiming to efficiently capture the soliton solutions. For the details, we refer readers to \cite{guo2017adaptive,huang2019adaptive}.

In \eqref{eq:kdv-weak-formulation}, we follow the approach in \cite{huang2019adaptive} and apply the multiresolution Hermite interpolation $\mathcal{I}_h$ to efficiently compute the nonlinear terms. It is required that the polynomial degree of Hermite interpolation $M \geq k+1$ \cite{huang2019adaptive}. For example, if we take quadratic polynomials for the DG space, then it is required to apply a cubic interpolation operator to treat the nonlinear terms.

Similarly, the semi-discrete adaptive multiresolution UWDG for ZK equation is to find $u^h \in \bV$, such that for any test function $v^h \in \bV$, the following equation holds,
\begin{align}\label{zk2}
 \int_{\Omega} u^h_t v^h dxdy & = \int_{\Omega} \mathcal{I}_h[f(u^h)]v^h_x dxdy + \int_{\Omega} u^hv^h_{xxx} dxdy +\int_{\Omega} u^h v^h_{xyy} dxdy \\ \nonumber
 & +\sum_{e\in \Gamma_y} \int_{e} \mathcal{I}_h[\hat{f}] [v^h] +  \check{u}^h_{xx} [v^h] - \widetilde{u}^h_{x} [v^h_x] + \hat{u}^h[v_{xx}^h] + \widetilde{u}^h[v^h_{yy}]  ds \\ \nonumber
 & - \sum_{e \in \Gamma_x} \int_{e} \check{u}^h_{y}[v_x^h] -  \hat{u}^h [v^h_{xy}] ds
  +\sum_{p \in S}( \check{u}^h_{y} \{[v^h] \}  -\widetilde{u}^h\{[v^h]\} )_p, \nonumber
\end{align}
Here, the numerical fluxes are given in \eqref{zk3.1.3.1}-\eqref{zk3.1.3.2}. A sparse grid UWDG scheme can be defined similarly if $\bV=\hat{\bV}_N^k.$

\section{Numerical examples}\label{sec:result}

In this section, we present several numerical examples to demonstrate the performance of the proposed adaptive multiresolution UWDG schemes for solving the KdV equation and the ZK equation. For the time discretization, we employ the third-order implicit-explicit (IMEX) Runge-Kutta (RK) method in \cite{pareschi2005implicit}. All adaptive calculations are obtained by $k=2$, unless otherwise stated. The maximum mesh level $N$ is set to be 8.  $\textrm{DoF}=\textrm{dim}(\bV)$ refers to the number of Alpert's multiwavelets basis functions in the adaptive grids. The coarsening threshold $\eta$ is taken to be $\epsilon/10$ with $\epsilon$ to be the refinement threshold \cite{guo2017adaptive}.

\subsection{KdV equation}

\begin{exam}[accuracy test for the KdV equation]\label{exam:accuracy-kdv-1d}
We first test accuracy of our scheme for the nonlinear KdV equation on the domain $[0,1]$:
	\begin{equation}
		u_t + \brac{\frac{u^2}{2}}_x + u_{xxx} = s(x,t).
	\end{equation}	
	By adding the additional source term
	\begin{equation}
		s(x,t) = 2\pi\cos(2\pi(x-t)) (-4\pi^2-1 + \sin(2\pi(x-t))),
	\end{equation}
	we have an explicit exact solution to test the accuracy:
	\begin{equation}
		u(x,t) = \sin(2\pi(x-t))
	\end{equation}
	The periodic boundary condition is applied here.

Similar as in \cite{bokanowski2013adaptive,guo2017adaptive}, two types rates of convergence will be investigated. The first one is the convergence rate with respect to the error thresold:
\begin{equation*}
    R_{\epsilon_l} = \frac{\log(e_{l-1}/e_l)}{\log(\epsilon_{l-1}/\epsilon_l)},
\end{equation*}
where $e_l$ is the standard $L^2$ error with refinement parameter $\epsilon$.
The second one is the convergence rate with respect to degrees of freedom:
\begin{equation*}
    R_{\textrm{DoF}_l} = \frac{\log(e_{l-1}/e_l)}{\log(\textrm{DoF}_{l}/\textrm{DoF}_{l-1})}.
\end{equation*}

For reference, the numerical results with UWDG scheme on full grid for $k=2$ are shown in Table \ref{tab:accuracy-kdv-full}. We observe clear third-order accuracy. Table \ref{tab:smooth-adaptive-kdv} presents the accuracy of the UWDG schemes with
adaptivity, from which we can observe the effectiveness of our adaptive scheme.
\begin{table}[!hbp]
\centering
\caption{Example \ref{exam:accuracy-kdv-1d}: accuracy test for KdV equation in 1D. Full grid, $k=2$. $t=0.1$.}
\label{tab:accuracy-kdv-full}
\begin{tabular}{c|c|c|c|c|c|c|c}
  \hline
  & $N$ & $L^1$-error & order & $L^2$-error & order & $L^{\infty}$-error & order \\
  \hline
\multirow{5}{3em}{$k$ = 2}
& 2 & 2.67e-01 & - & 3.47e-01 & - & 6.56e-01 \\
& 3 & 3.19e-02 & 3.07 & 3.82e-02 & 3.18 & 6.90e-02 & 3.25 \\
& 4 & 2.46e-03 & 3.70 & 2.78e-03 & 3.78 & 5.94e-03 & 3.54 \\
& 5 & 2.88e-04 & 3.09 & 3.32e-04 & 3.07 & 8.96e-04 & 2.73 \\
& 6 & 3.58e-05 & 3.01 & 4.15e-05 & 3.00 & 1.15e-04 & 2.96 \\
\hline
\end{tabular}
\end{table}

\end{exam}
\begin{table}[!hbp]
\centering
\caption{Example \ref{exam:accuracy-kdv-1d}: accuracy test for KdV equation in 1D.  Adaptive scheme, $k=2$, $t=0.1$.}
\label{tab:smooth-adaptive-kdv}
\begin{tabular}{c|c|c|c|c|c}
  \hline
  & $\epsilon$ & DoF & $L^2$-error & $R_{\textrm{DoF}}$ & $R_{\epsilon}$ \\
  \hline
\multirow{4}{3em}{$k=2$}
& 1e-2 & 24 & 3.82e-2 & - & - \\
& 1e-3 & 48 & 2.78e-3 & 3.78 & 1.14 \\
& 1e-4 & 90 & 7.14e-4 & 2.16 & 0.59 \\
& 1e-5 & 180 & 5.60e-5 & 3.67 & 1.11 \\
\hline
\end{tabular}
\end{table}

\begin{exam}[solitons for the KdV equation]\label{exam:kdv-1d}
In this example, we consider the nonlinear KdV equation on the domain $[0,1]$:
\begin{equation}
  u_t + \brac{\frac{u^2}{2}}_x + \sigma u_{xxx} = 0
\end{equation}
with periodic boundary conditions.

We first consider the single soliton case with the exact solution \cite{debussche1999numerical}:
\begin{equation}
  u(x,t) = 3c\sech^2(\kappa(x-ct-x_0))
\end{equation}
with $c=0.3$, $x_0=0.5$, $\sigma=5\times10^{-4}$ and $\kappa=\frac{1}{2}({c}/{\sigma})^{1/2}$. The numerical solutions and the active elements at $t=0.1$ and $t=0.8$ are presented in Figure \ref{fig:kdv-1d-single}. We can see that our adaptive algorithm can capture the evolution of solitons with steep gradients efficiently. The active elements are moving with the soliton.
\begin{figure}
    \centering
    \subfigure[numerical and exact solutions at $t=0.1$]{
    \begin{minipage}[b]{0.46\textwidth}
    \includegraphics[width=1\textwidth]{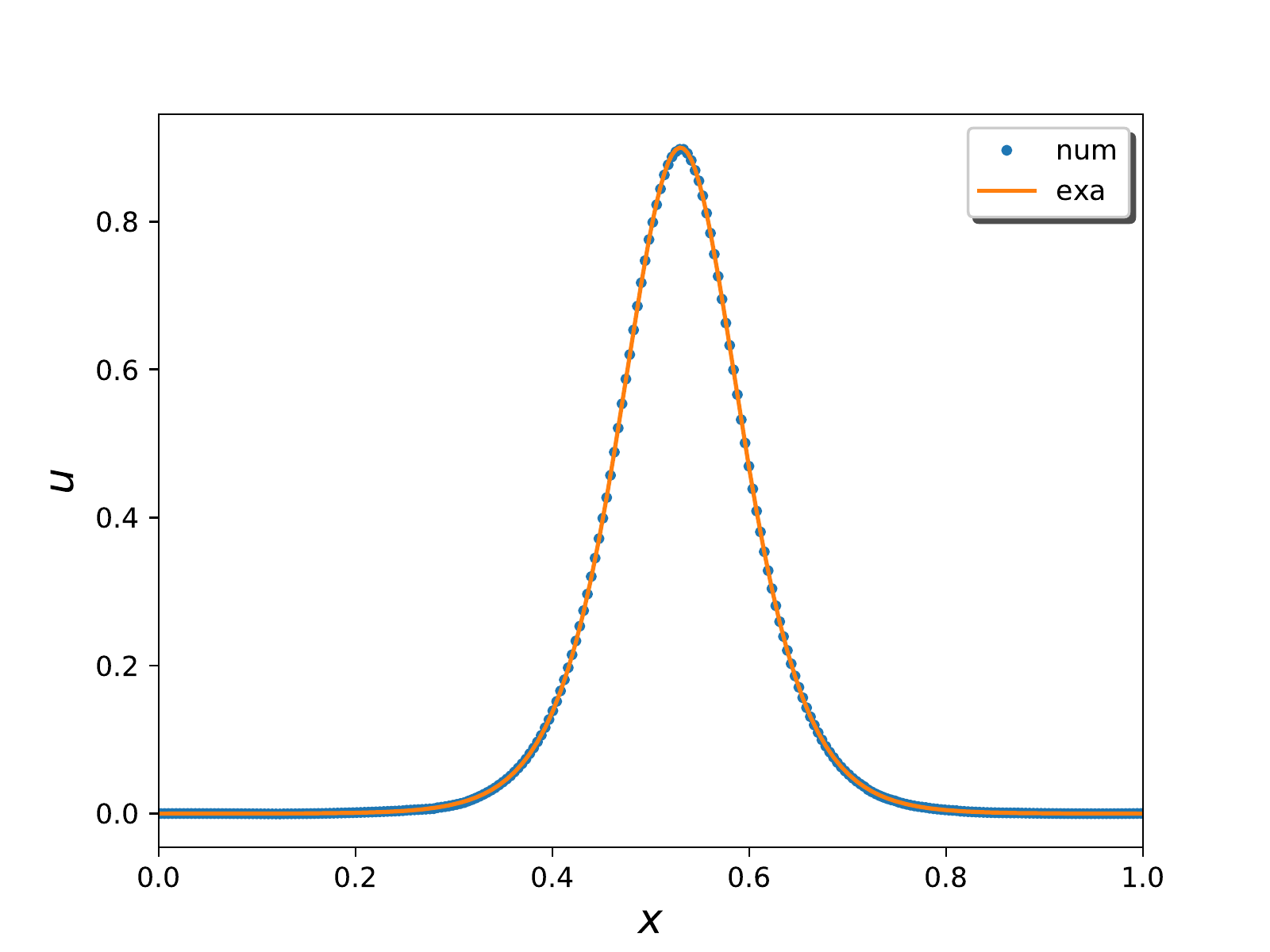}
    \end{minipage}
    }
    \subfigure[active elements at $t=0.1$]{
    \begin{minipage}[b]{0.46\textwidth}
    \includegraphics[width=1\textwidth]{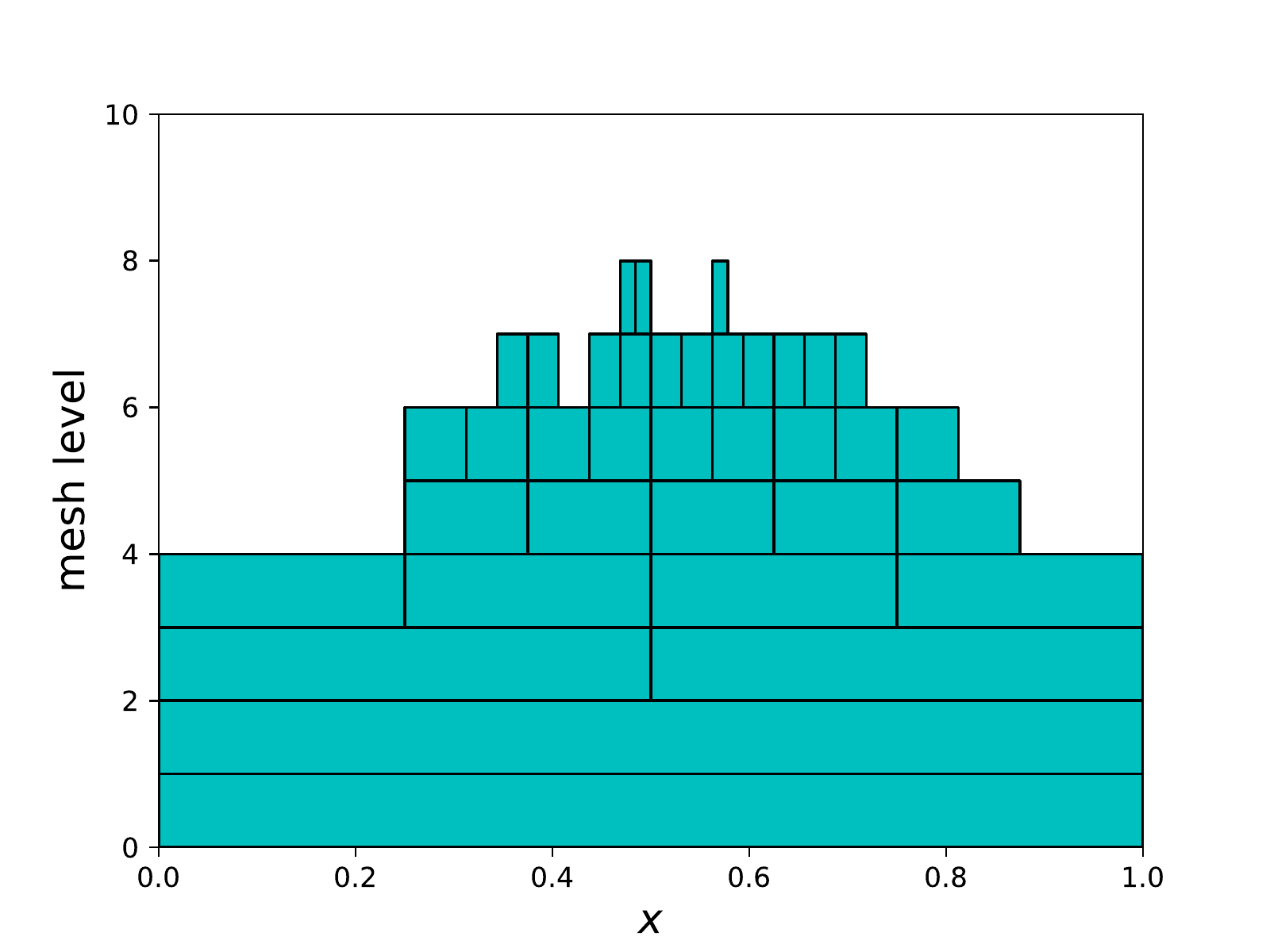}
    \end{minipage}
    }
    \bigskip
    \subfigure[numerical and exact solutions at $t=0.8$]{
    \begin{minipage}[b]{0.46\textwidth}
    \includegraphics[width=1\textwidth]{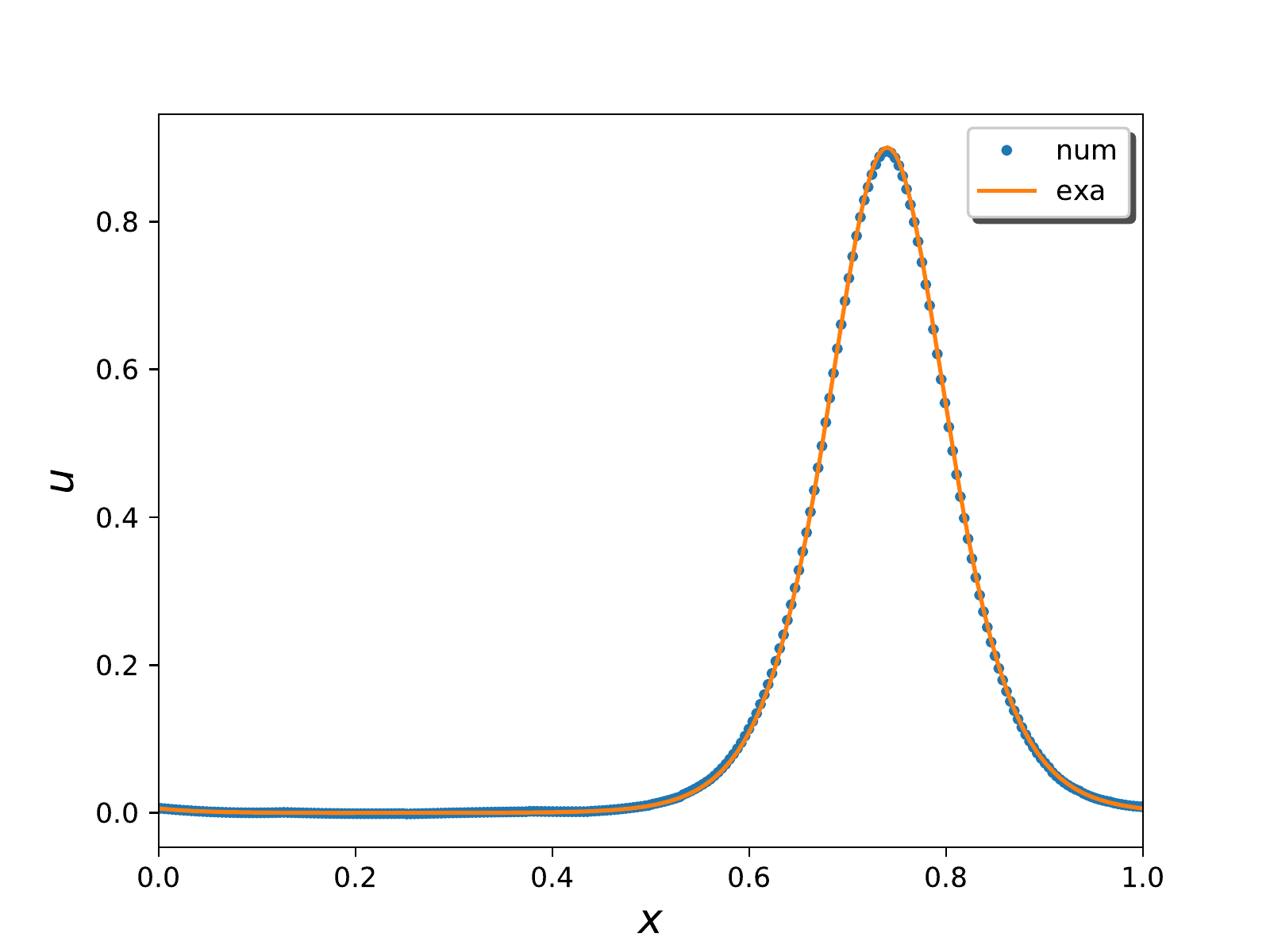}
    \end{minipage}
    }
    \subfigure[active elements at $t=0.8$]{
    \begin{minipage}[b]{0.46\textwidth}
    \includegraphics[width=1\textwidth]{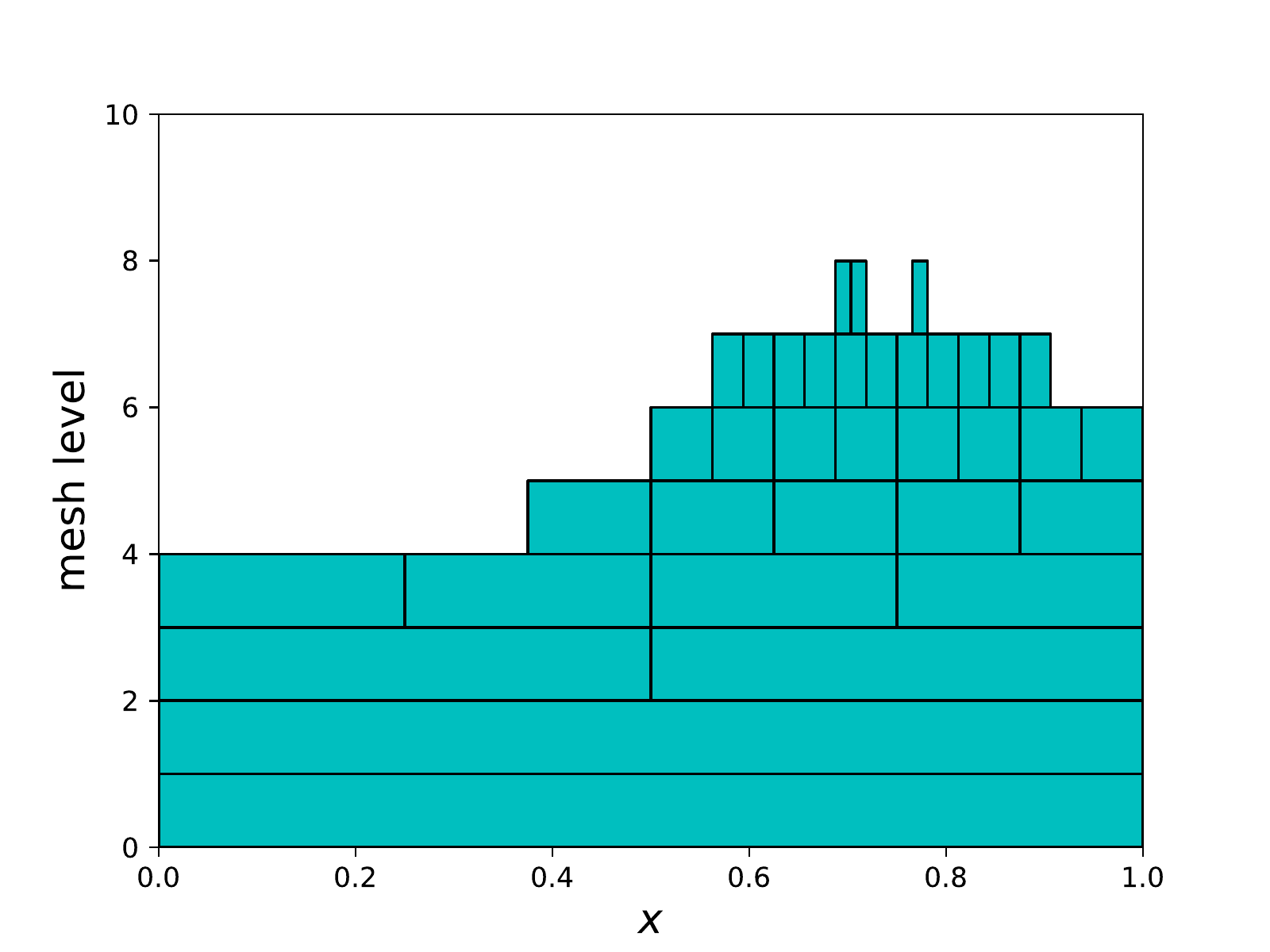}
    \end{minipage}
    }
    \caption{Example \ref{exam:kdv-1d}: nonlinear KdV equation in 1D, single soliton. $t=0.1$ and $t=0.8$. $N=8$ and $\epsilon=10^{-4}$. Left: numerical and exact solutions at $t=0.1$ and $t=0.8$; right: active elements at $t=0.1$ and $t=0.8$.}
    \label{fig:kdv-1d-single}
\end{figure}

Next, we consider the double soliton collision which has the initial condition \cite{debussche1999numerical}:
\begin{equation}
  u(x,0) = 3c_1 \sech^2(\kappa_1(x-x_1)) + 3c_2 \sech^2(\kappa_2(x-x_2)),
\end{equation}
with $c_1=0.3$, $c_2=0.1$, $x_1=0.45$, $x_2=0.65$, $\sigma=1.21\times10^{-4}$ and $\kappa_i=\frac{1}{2}{({c_i}/{\sigma})^{1/2}}$ for $i=1,2$. The numerical solutions and the active elements at $t=0$, 0.7 and 1 are shown in Figure \ref{fig:kdv-1d-double}. It is observed that our adaptive scheme is able to simulate a clean interaction where no dispersive tail or supplementary soliton are created.
\begin{figure}
    \centering
    \subfigure[numerical solution at $t=0$]{
    \begin{minipage}[b]{0.46\textwidth}
    \includegraphics[width=1\textwidth]{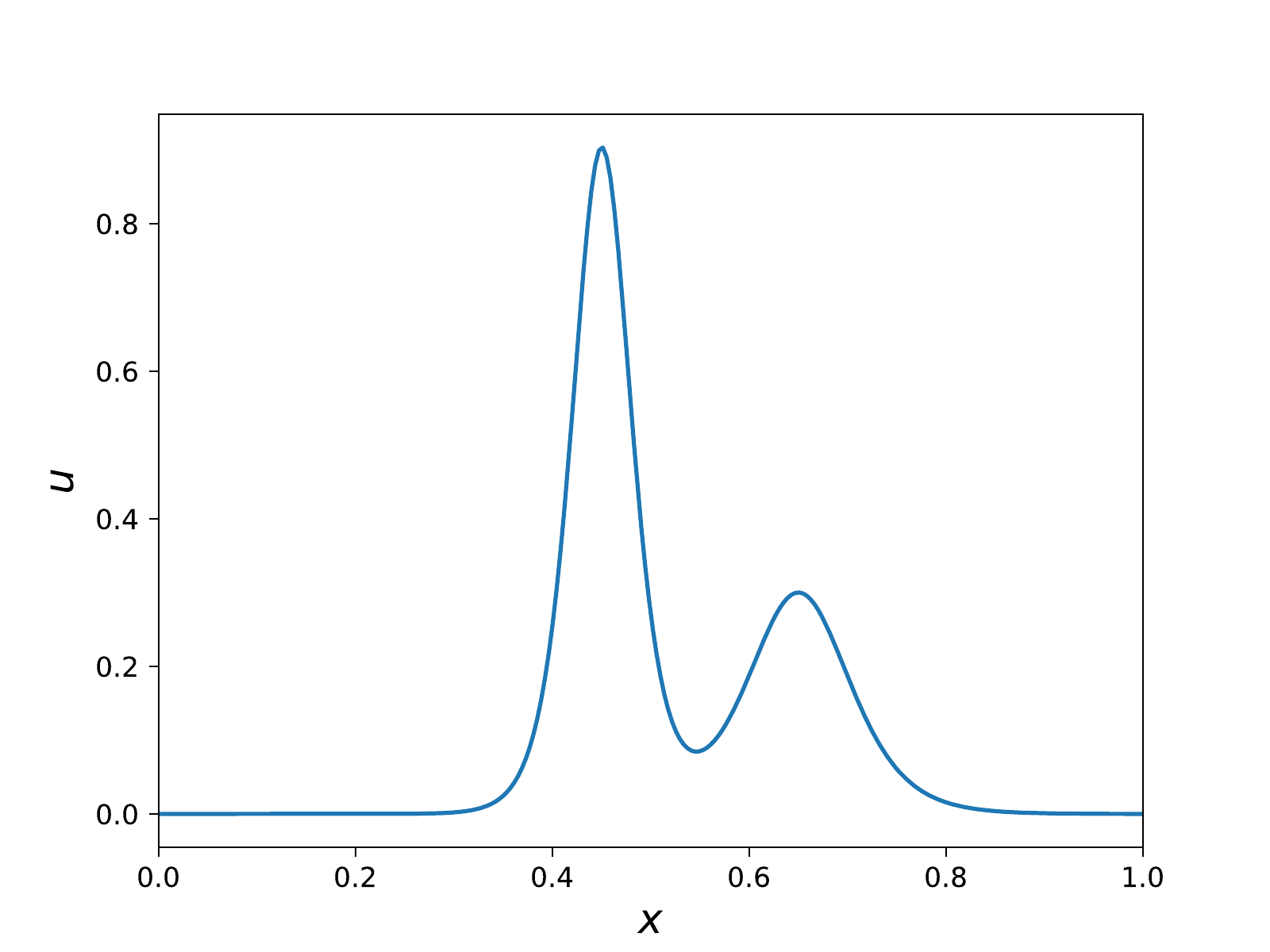}
    \end{minipage}
    }
    \subfigure[active elements at $t=0$]{
    \begin{minipage}[b]{0.46\textwidth}
    \includegraphics[width=1\textwidth]{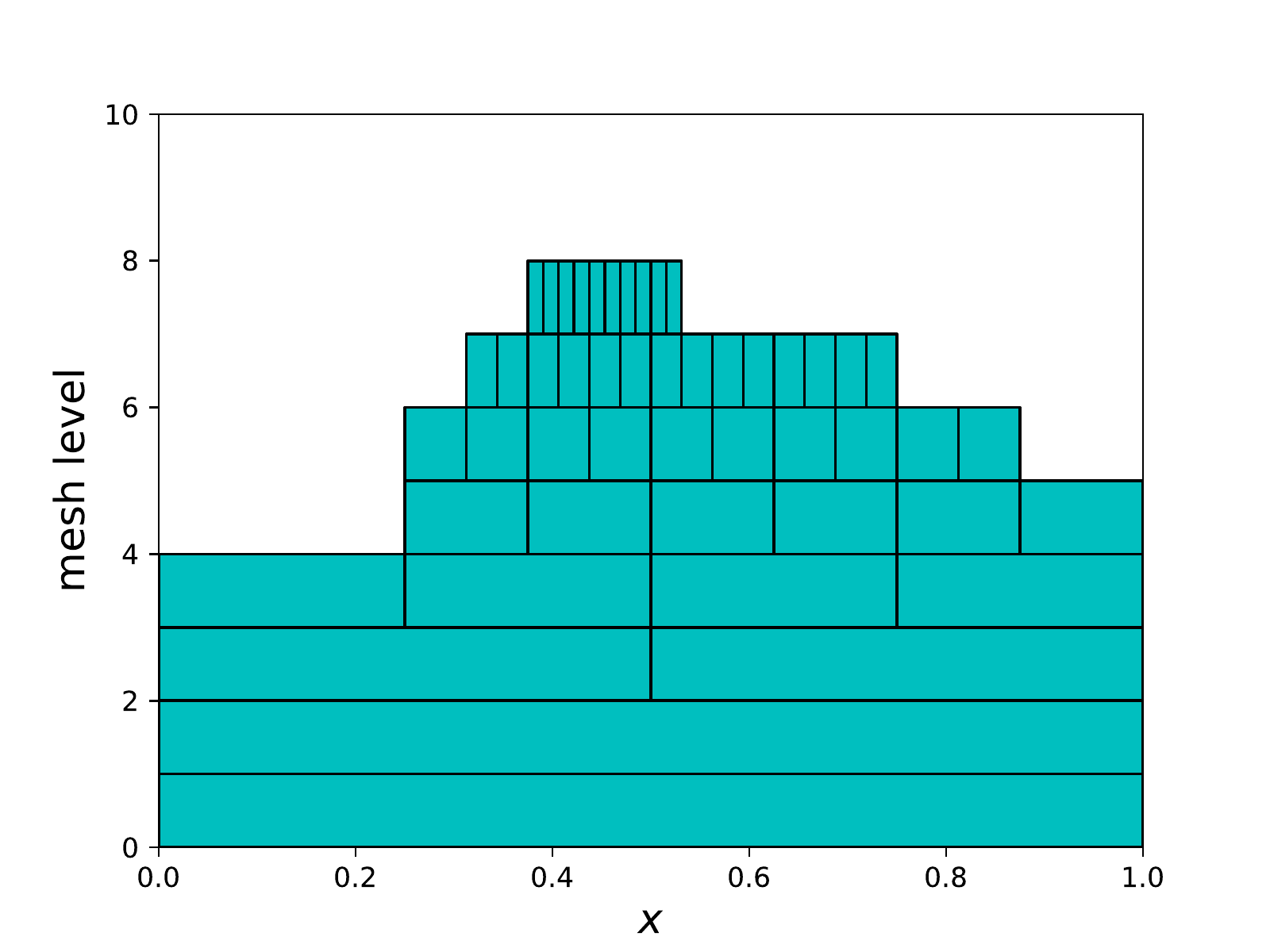}
    \end{minipage}
    }
    \bigskip
    \subfigure[numerical solution at $t=0.7$]{
    \begin{minipage}[b]{0.46\textwidth}
    \includegraphics[width=1\textwidth]{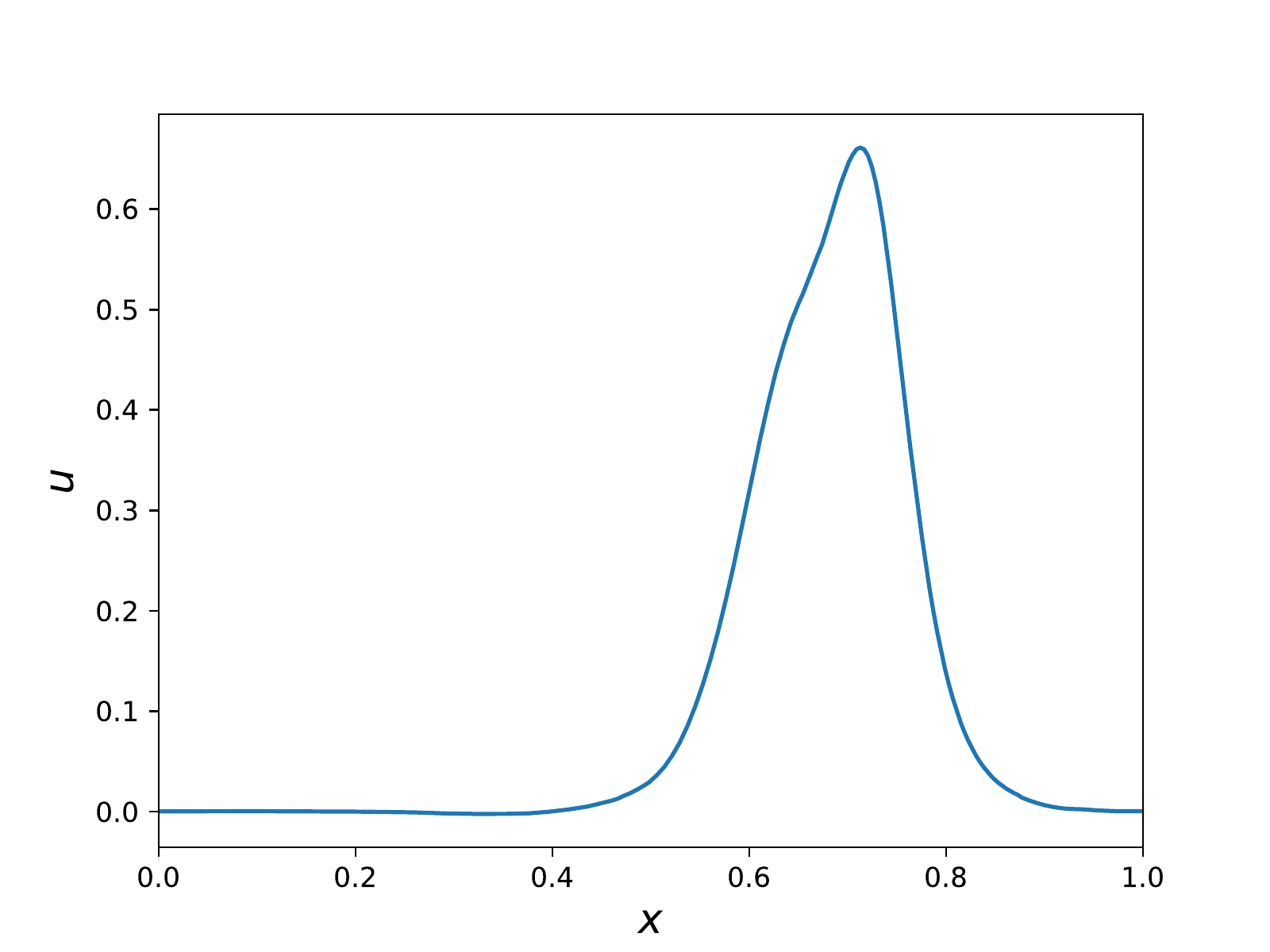}
    \end{minipage}
    }
    \subfigure[active elements at $t=0.7$]{
    \begin{minipage}[b]{0.46\textwidth}
    \includegraphics[width=1\textwidth]{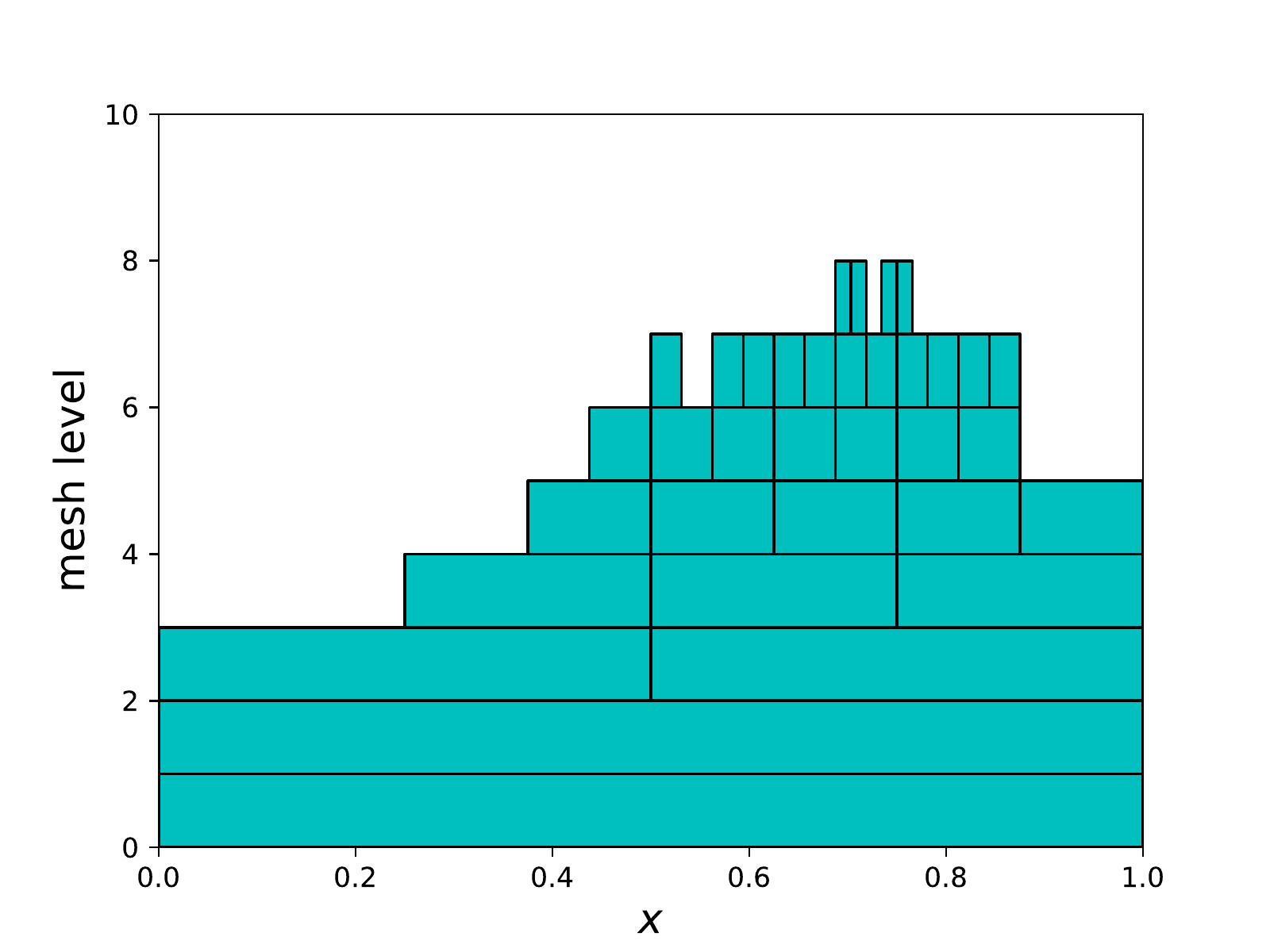}
    \end{minipage}
    }
    \bigskip
    \subfigure[numerical solution at $t=1$]{
    \begin{minipage}[b]{0.46\textwidth}
    \includegraphics[width=1\textwidth]{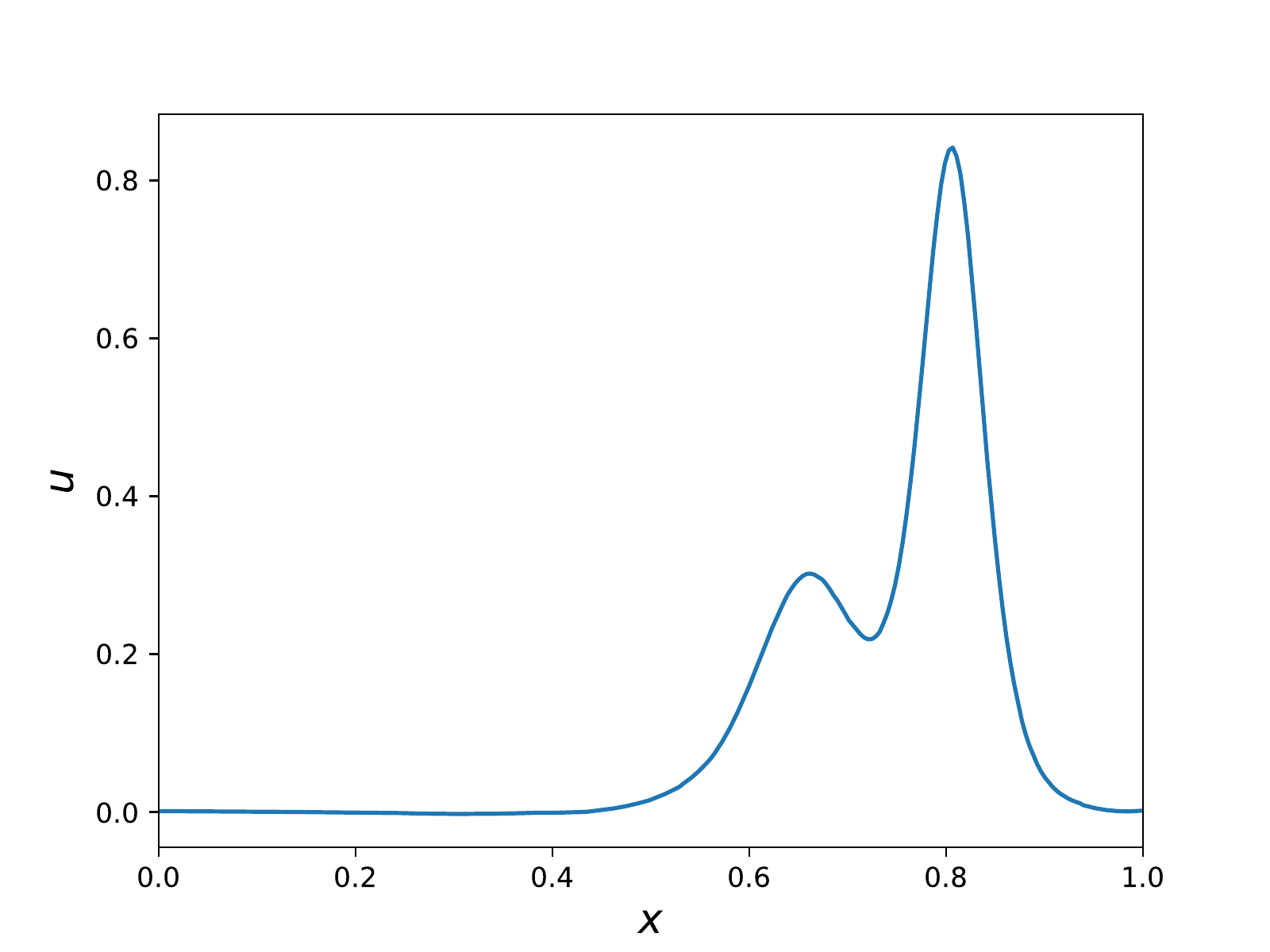}
    \end{minipage}
    }
    \subfigure[active elements at $t=1$]{
    \begin{minipage}[b]{0.46\textwidth}
    \includegraphics[width=1\textwidth]{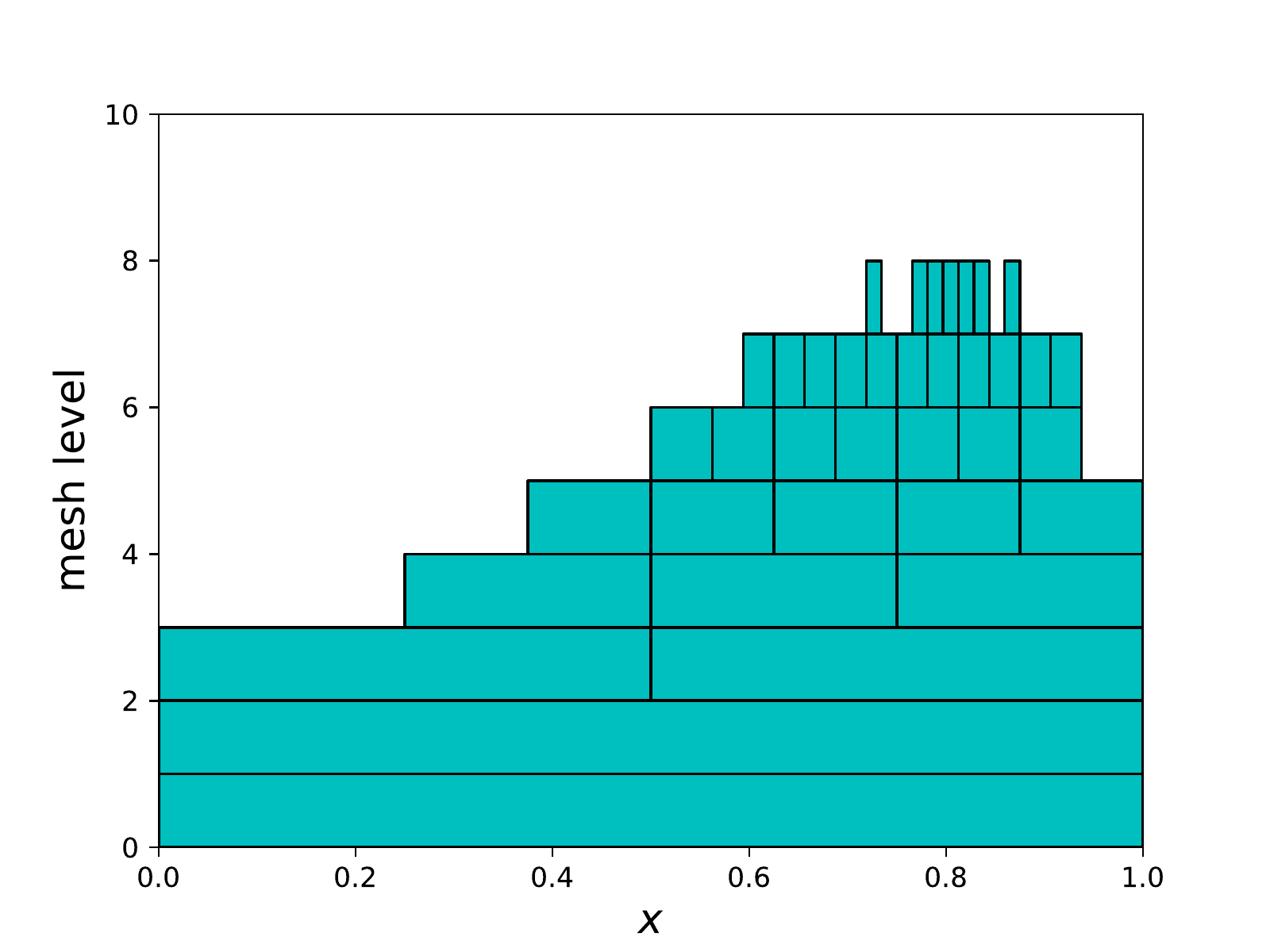}
    \end{minipage}
    }
    \caption{Example \ref{exam:kdv-1d}: nonlinear KdV equation in 1D, double soliton. $t=0$, 0.7 and 1. $N=8$ and $\epsilon=10^{-4}$. Left: numerical solutions at $t=0$, 0.7 and 1; right: active elements at $t=0$, 0.7 and 1.}
    \label{fig:kdv-1d-double}
\end{figure}

The last case in this example is the triple soliton splitting which has the initial condition \cite{yan2002local}
\begin{equation}
	u(x,0) = \frac{2}{3} \sech^2\brac{\frac{x-x_0}{\sqrt{108\sigma}}},
\end{equation}
with $x_0=0.5$ and $\sigma=2.5\times10^{-5}$. The numerical solutions and the active elements at $t=0$, 0.5 and 1 are shown in Figure \ref{fig:kdv-1d-triple}. Again, we observe that our adaptive algorithm can capture the steep gradients of the soliton efficiently. Moreover, the solution profiles are comparable to the results in \cite{yan2002local}.
\begin{figure}
    \centering
    \subfigure[numerical solution at $t=0$]{
    \begin{minipage}[b]{0.46\textwidth}
    \includegraphics[width=1\textwidth]{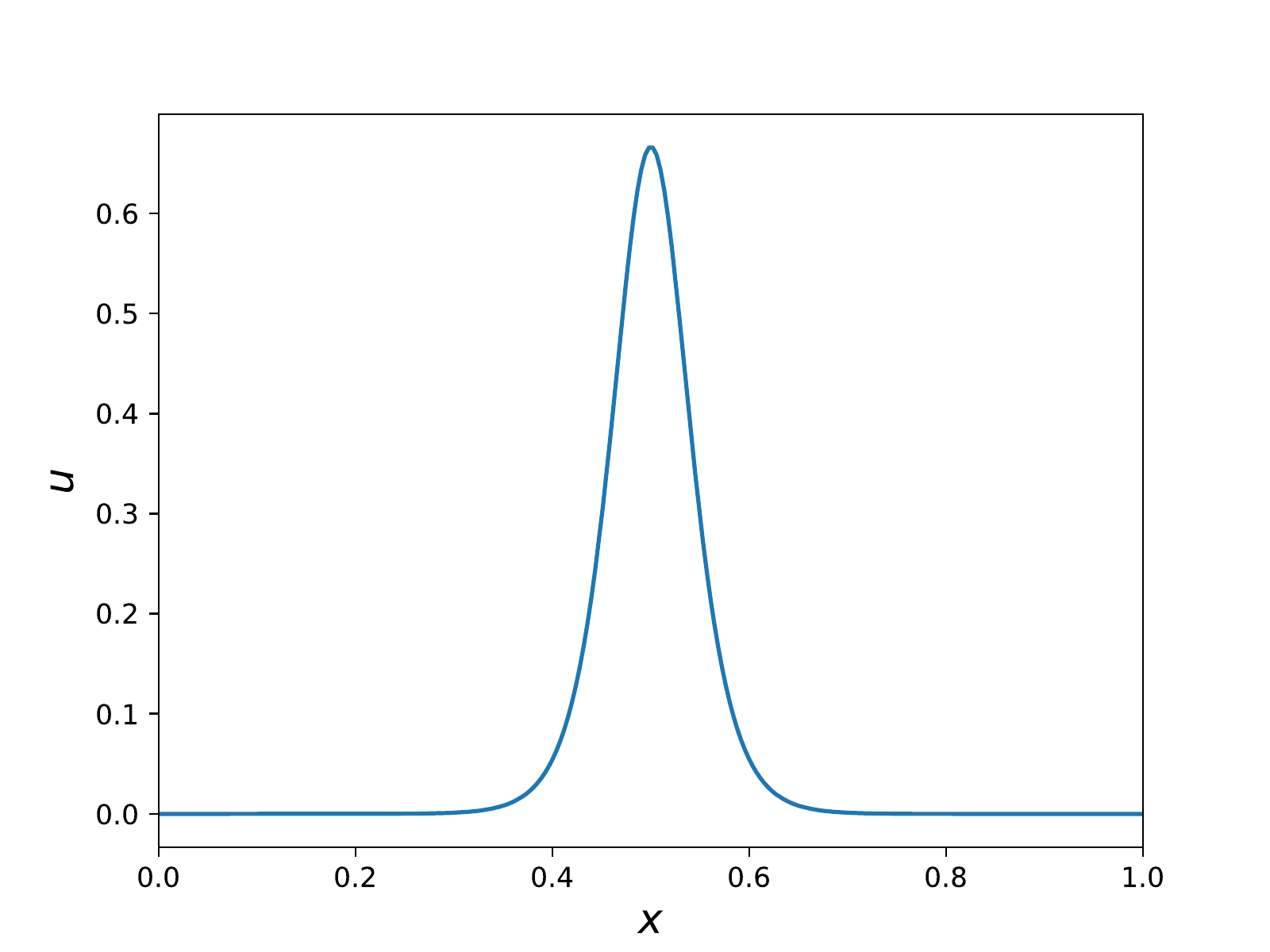}
    \end{minipage}
    }
    \subfigure[active elements at $t=0$]{
    \begin{minipage}[b]{0.46\textwidth}
    \includegraphics[width=1\textwidth]{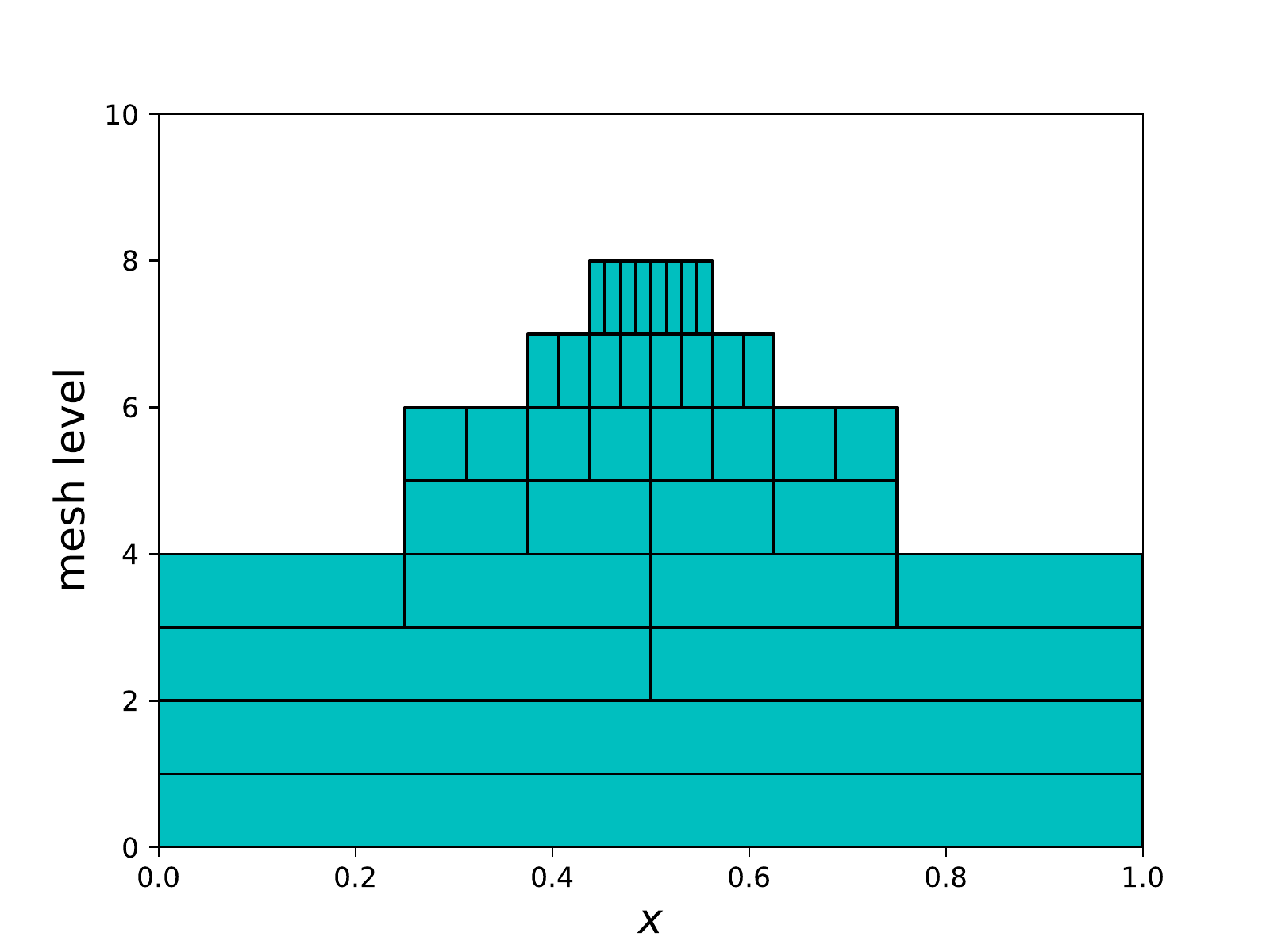}
    \end{minipage}
    }
    \bigskip
    \subfigure[numerical solution at $t=0.5$]{
    \begin{minipage}[b]{0.46\textwidth}
    \includegraphics[width=1\textwidth]{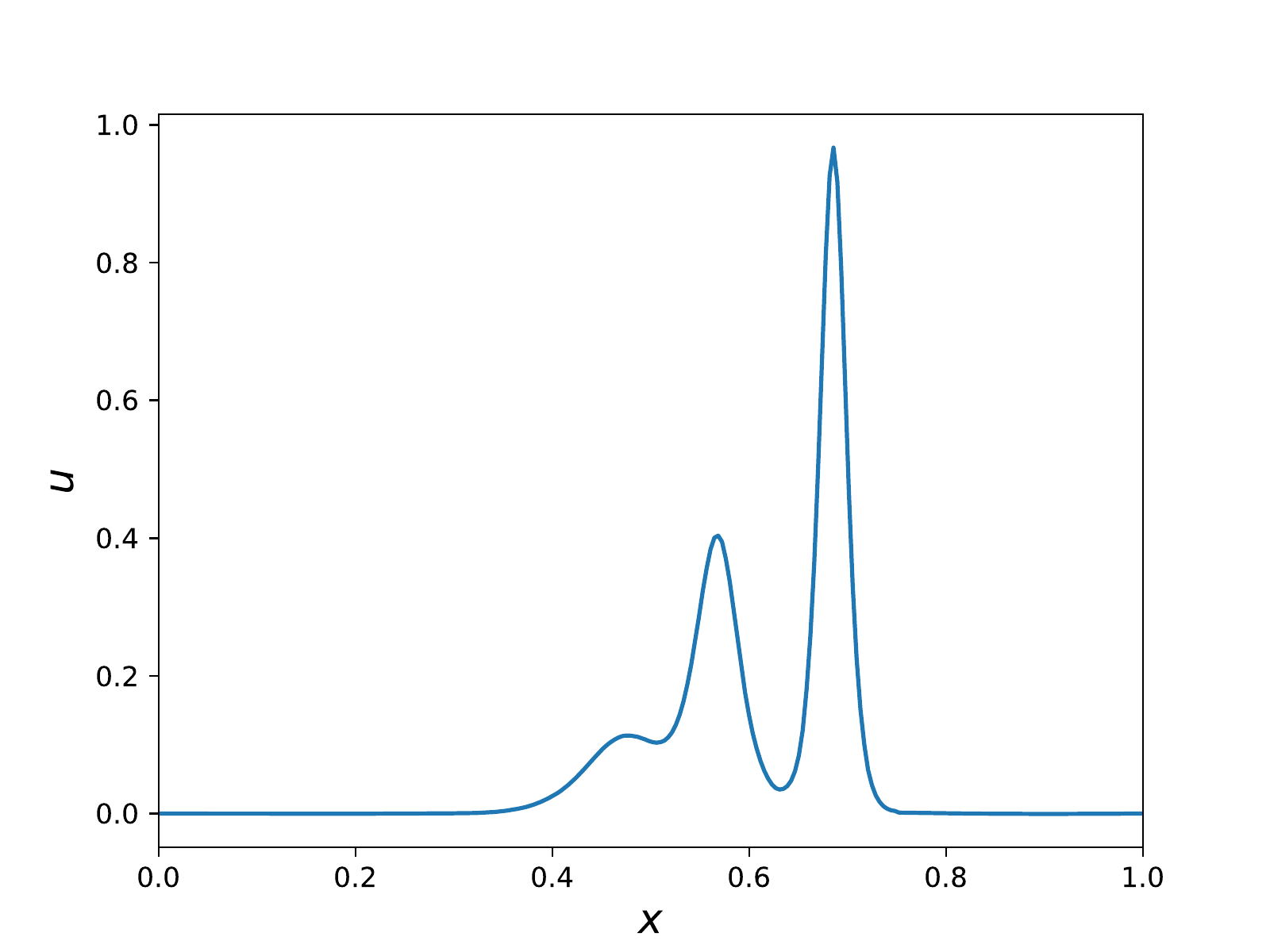}
    \end{minipage}
    }
    \subfigure[active elements at $t=0.5$]{
    \begin{minipage}[b]{0.46\textwidth}
    \includegraphics[width=1\textwidth]{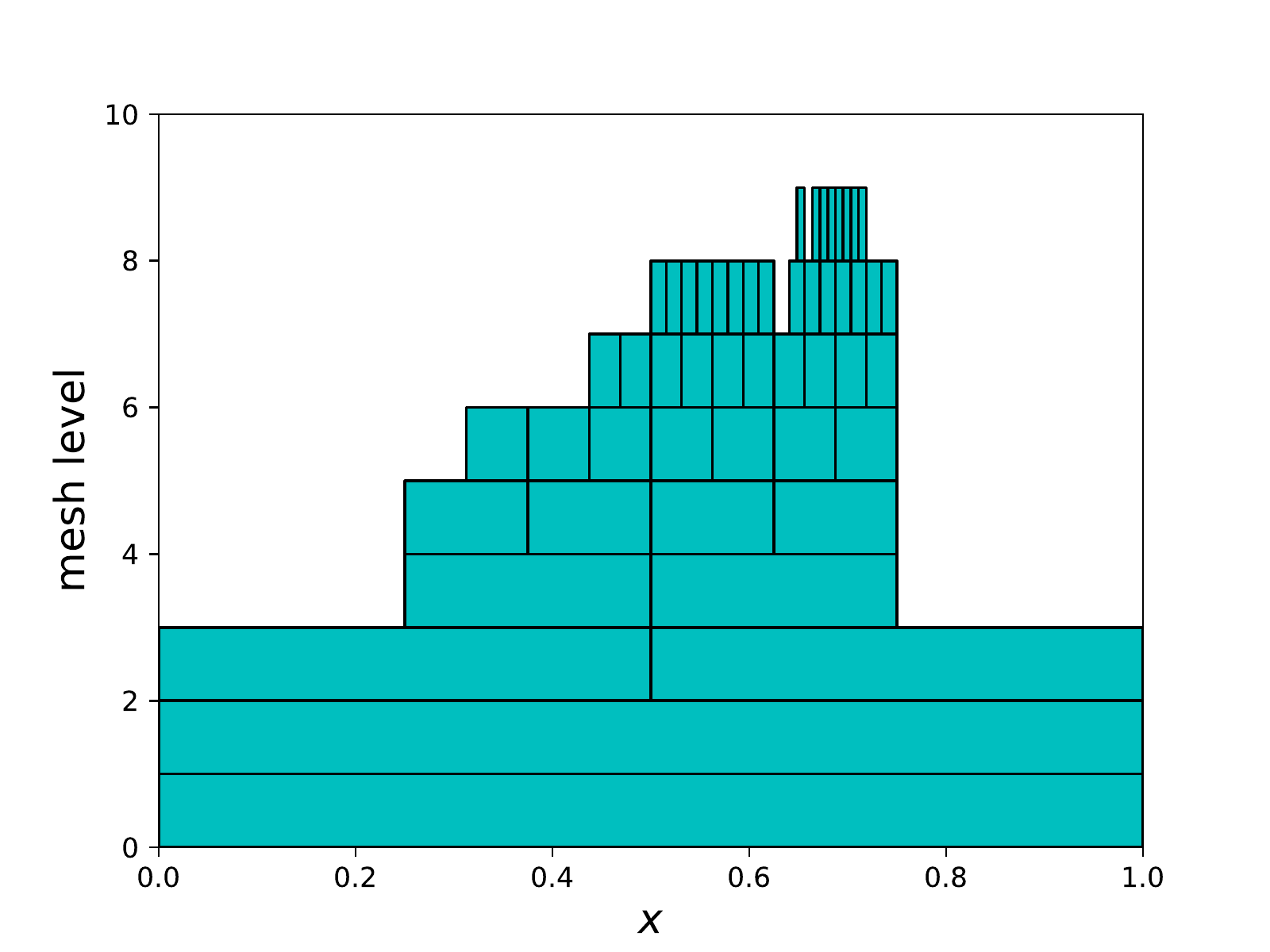}
    \end{minipage}
    }
    \bigskip
    \subfigure[numerical solution at $t=1$]{
    \begin{minipage}[b]{0.46\textwidth}
    \includegraphics[width=1\textwidth]{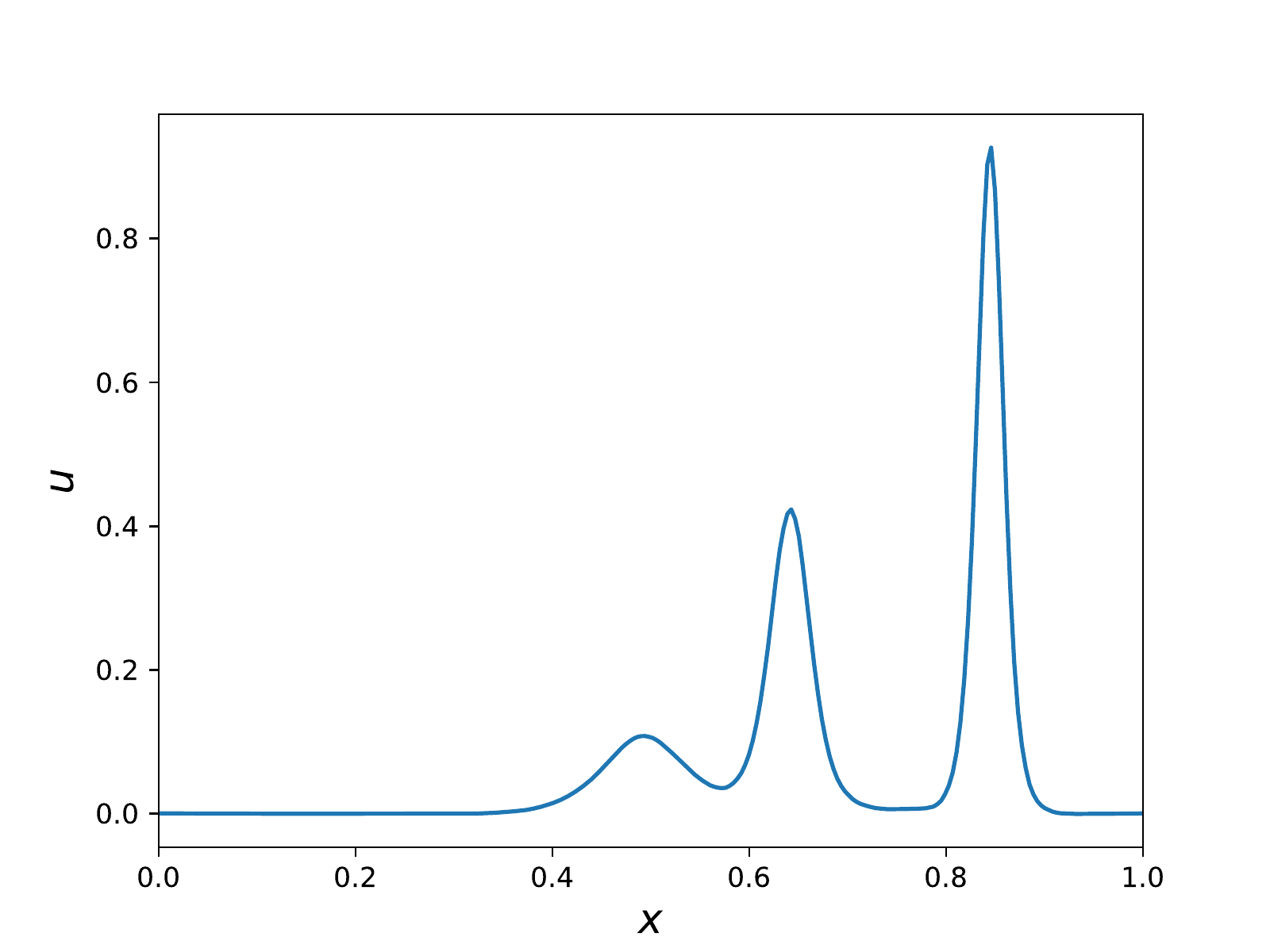}
    \end{minipage}
    }
    \subfigure[active elements at $t=1$]{
    \begin{minipage}[b]{0.46\textwidth}
    \includegraphics[width=1\textwidth]{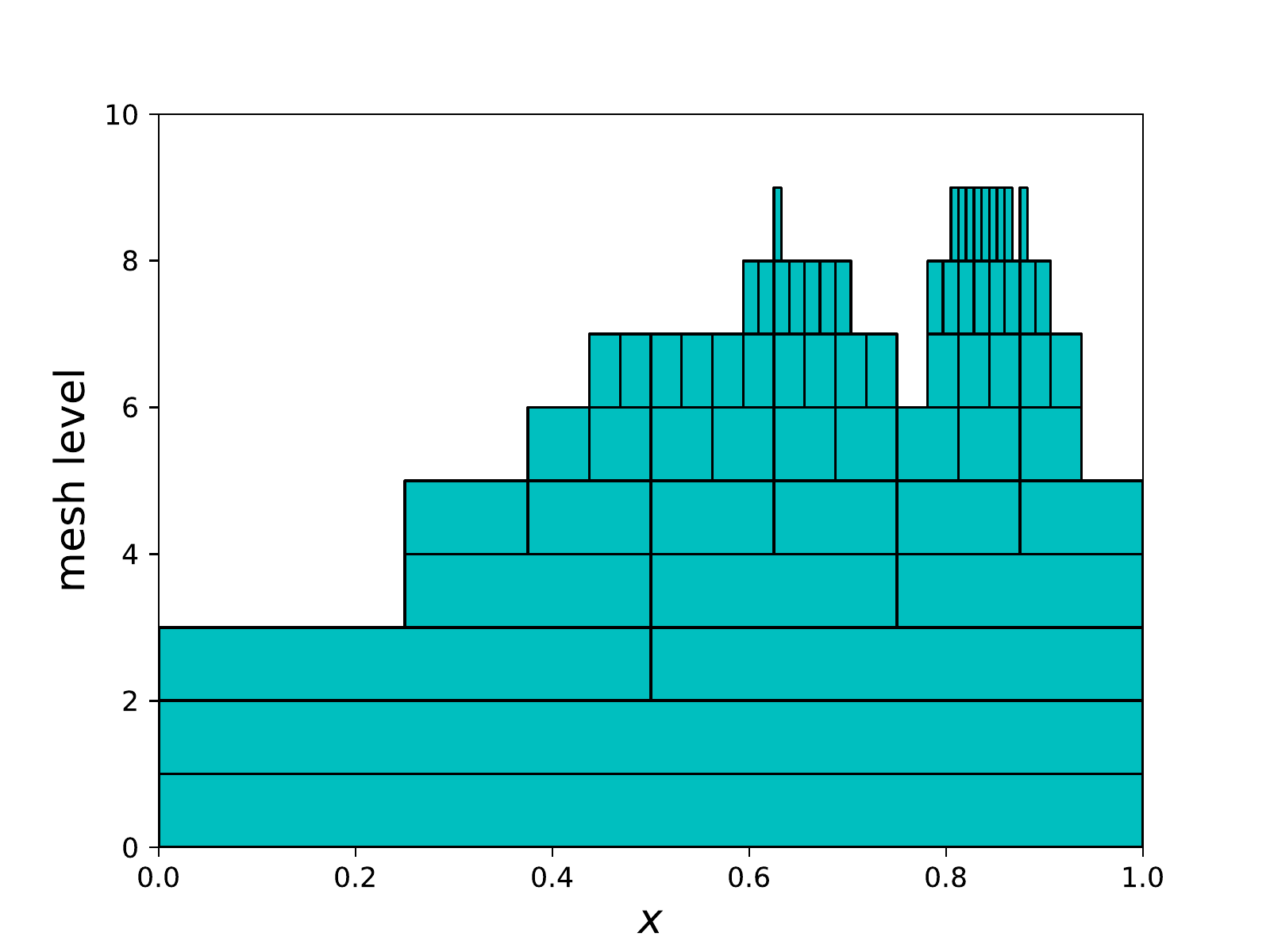}
    \end{minipage}
    }
    \caption{Example \ref{exam:kdv-1d}: nonlinear KdV equation in 1D, triple soliton splitting. $t=0$, 0.5 and 1. $N=8$ and $\epsilon=10^{-4}$. Left: numerical solutions at $t=0$, 0.5 and 1; right: active elements at $t=0$, 0.5 and 1.}
    \label{fig:kdv-1d-triple}
\end{figure}

\end{exam}

\subsection{ZK equation}

\begin{exam}[accuracy test for the simplified ZK equation]\label{exam:accuracy-uxyy}
    We test the convergence order of UWDG scheme on full and sparse grids for the simplified ZK equation:
    \begin{equation}
        u_t + u_{xyy} = 0,
    \end{equation}
    with the periodic boundary conditions. The exact solution is taken to be
    \begin{equation}
        u(x,y,t) = \sin(2\pi(x+y)+8\pi^3t)).
    \end{equation}
\end{exam}
The numerical results with full grid and sparse grid for $k=1,2,3$ are shown in Table \ref{tab:accuracy-uxyy-full} and Table \ref{tab:accuracy-uxyy-sparse}, respectively. We observe clearly $(k+1)$ order of accuracy for full grid, which  verifies our optimal error estimate in Theorem \ref{thm:error-estimate}. Moreover, slightly more than $(k+\frac{1}{2})$ order of accuracy is observed for sparse grid, which is consistent with approximation results for sparse grid, e.g. see \cite{wang2016elliptic} for results on elliptic equations.
\begin{table}[!hbp]
\centering
\caption{Example \ref{exam:accuracy-uxyy}: accuracy test for the simplified ZK equation. Full grid, $k=1, 2, 3$. $t=0.01$.}
\label{tab:accuracy-uxyy-full}
\begin{tabular}{c|c|c|c|c|c|c|c}
  \hline
  & $N$ & $L^1$-error & order & $L^2$-error & order & $L^{\infty}$-error & order \\
  \hline
\multirow{5}{3em}{$k$ = 1}
& 2 & 8.31e-01 & - & 9.75e-01 & - & 2.00e+00 & - \\
& 3 & 3.52e-01 & 1.24 & 3.93e-01 & 1.31 & 6.40e-01 & 1.64 \\
& 4 & 8.50e-02 & 2.05 & 9.43e-02 & 2.06 & 1.47e-01 & 2.12 \\
& 5 & 2.08e-02 & 2.03 & 2.31e-02 & 2.03 & 3.59e-02 & 2.03 \\
& 6 & 5.18e-03 & 2.01 & 5.75e-03 & 2.01 & 8.93e-03 & 2.01 \\
  \hline
\multirow{5}{3em}{$k$ = 2}
& 2 & 3.02e-02 & - & 4.08e-02 & - & 1.19e-01 & - \\
& 3 & 3.55e-03 & 3.09 & 4.75e-03 & 3.10 & 1.60e-02 & 2.90 \\
& 4 & 4.53e-04 & 2.97 & 5.84e-04 & 3.02 & 1.97e-03 & 3.02 \\
& 5 & 5.75e-05 & 2.98 & 7.27e-05 & 3.01 & 2.42e-04 & 3.02 \\
& 6 & 7.26e-06 & 2.99 & 9.07e-06 & 3.00 & 2.98e-05 & 3.02 \\
\hline
\multirow{5}{3em}{$k$ = 3}
& 2 & 2.22e-03 & - & 2.93e-03 & - & 7.59e-03 & - \\
& 3 & 1.64e-04 & 3.76 & 2.17e-04 & 3.75 & 5.77e-04 & 3.72 \\
& 4 & 9.57e-06 & 4.10 & 1.31e-05 & 4.06 & 3.85e-05 & 3.91 \\
& 5 & 6.22e-07 & 3.94 & 8.52e-07 & 3.94 & 2.55e-06 & 3.92 \\
& 6 & 3.90e-08 & 4.00 & 5.35e-08 & 3.99 & 1.60e-07 & 3.99 \\
\hline
\end{tabular}
\end{table}

\begin{table}[!hbp]
\centering
\caption{Example \ref{exam:accuracy-uxyy}: accuracy test for the simplified ZK equation. Sparse grid, $k=1, 2, 3$. $t=0.01$.}
\label{tab:accuracy-uxyy-sparse}
\begin{tabular}{c|c|c|c|c|c|c|c}
  \hline
  & $N$ & $L^1$-error & order & $L^2$-error & order & $L^{\infty}$-error & order \\
  \hline
\multirow{5}{3em}{$k$ = 1}
   & 2 & 7.60e-01 & - & 8.66e-01 & - & 1.80e+00 & - \\
   & 3 & 6.67e-01 & 0.19 & 7.56e-01 & 0.20 & 1.59e+00 & 0.18 \\
   & 4 & 4.10e-01 & 0.70 & 4.87e-01 & 0.63 & 1.06e+00 & 0.58 \\
   & 5 & 1.67e-01 & 1.29 & 1.92e-01 & 1.35 & 4.27e-01 & 1.32 \\
   & 6 & 5.31e-02 & 1.66 & 6.24e-02 & 1.62 & 1.81e-01 & 1.24 \\
  \hline
\multirow{5}{3em}{$k$ = 2}
   & 2 & 2.04e-01 & - & 2.58e-01 & - & 7.13e-01 & - \\
   & 3 & 3.73e-02 & 2.45 & 4.73e-02 & 2.45 & 1.67e-01 & 2.10 \\
   & 4 & 5.63e-03 & 2.73 & 7.53e-03 & 2.65 & 4.38e-02 & 1.93 \\
   & 5 & 9.11e-04 & 2.63 & 1.20e-03 & 2.65 & 7.93e-03 & 2.46 \\
   & 6 & 1.30e-04 & 2.81 & 1.73e-04 & 2.79 & 1.18e-03 & 2.75 \\
\hline
\multirow{5}{3em}{$k$ = 3}
   & 2 & 1.10e-02 & - & 1.36e-02 & - & 5.63e-02 & - \\
   & 3 & 1.08e-03 & 3.34 & 1.43e-03 & 3.25 & 9.37e-03 & 2.59 \\
   & 4 & 7.93e-05 & 3.77 & 1.07e-04 & 3.74 & 7.17e-04 & 3.71 \\
   & 5 & 6.02e-06 & 3.72 & 7.89e-06 & 3.76 & 5.76e-05 & 3.64 \\
   & 6 & 4.15e-07 & 3.86 & 5.55e-07 & 3.83 & 4.86e-06 & 3.57 \\
\hline
\end{tabular}
\end{table}

\begin{exam}[accuracy test for the ZK equation]\label{exam:zk-accuracy}
    We consider the ZK equation
    \begin{equation}\label{eq:zk-source}
        u_t + \brac{\frac{u^2}{2}}_x + u_{xxx} + u_{xyy} = s(x,y,t)
    \end{equation}
    with periodic boundary conditions.
    We add a particular source term
    \begin{equation}
    	s(x,y,t) = 2 \pi \cos(2 \pi (x + y + t)) (1 - 8 \pi^2 + \sin(2 \pi (x + y + t)))
    \end{equation}
    such that the exact solution is
    \begin{equation}
        u(x,y,t) = \sin(2\pi(x+y+t)).
    \end{equation}
\end{exam}

In Table \ref{tab:zk-accuracy-full} and Table \ref{tab:zk-accuracy-sparse}, we present the convergence order for full grid and sparse grid in the case of $k=2$ and $k=3$, from which $(k+1)$-th order is clearly observed for the full grid. The convergence order for the sparse grid is between $k$ and $(k+1)$. The accuracy with the adaptive method is shown in Table \ref{tab:zk-accuracy-adaptive}. The $R_{\textrm{DOF}}$ is larger than the full grid case $(k+1)/d$. Moreover, it is observed that to reach the same error of magnitude, it takes much fewer DoFs of $k=3$ than $k=2$.
\begin{table}[!hbp]
\centering
\caption{Example \ref{exam:zk-accuracy}: accuracy test for the ZK equation \eqref{eq:zk-source}. Full grid, $k=2, 3$, $t=0.01$.}
\label{tab:zk-accuracy-full}
\begin{tabular}{c|c|c|c|c|c|c|c}
  \hline
  & $N$ & $L^1$-error & order & $L^2$-error & order & $L^{\infty}$-error & order \\
  \hline
\multirow{5}{3em}{$k$ = 2}
& 2 & 1.85e-01 & - & 2.24e-01 & - & 5.04e-01 & - \\
& 3 & 3.85e-02 & 2.26 & 4.40e-02 & 2.35 & 8.80e-02 & 2.52 \\
& 4 & 5.56e-03 & 2.79 & 6.25e-03 & 2.82 & 1.12e-02 & 2.97 \\
& 5 & 7.17e-04 & 2.95 & 8.04e-04 & 2.96 & 1.40e-03 & 3.00 \\
& 6 & 9.01e-05 & 2.99 & 1.01e-04 & 2.99 & 1.74e-04 & 3.01 \\
\hline
\multirow{5}{3em}{$k$ = 3}	
& 2 & 1.92e-02 & - & 2.29e-02 & - & 4.44e-02 & - \\
& 3 & 2.41e-03 & 2.99 & 2.73e-03 & 3.06 & 4.95e-03 & 3.16 \\
& 4 & 1.36e-04 & 4.15 & 1.54e-04 & 4.15 & 2.79e-04 & 4.15 \\
& 5 & 9.44e-06 & 3.85 & 1.06e-05 & 3.85 & 1.89e-05 & 3.89 \\
& 6 & 5.96e-07 & 3.99 & 6.71e-07 & 3.99 & 1.18e-06 & 4.00 \\
\hline
\end{tabular}
\end{table}

\begin{table}[!hbp]
\centering
\caption{Example \ref{exam:zk-accuracy}: accuracy test for the ZK equation \eqref{eq:zk-source}. Sparse grid, $k=2, 3$, $t=0.01$.}
\label{tab:zk-accuracy-sparse}
\begin{tabular}{c|c|c|c|c|c|c|c}
  \hline
  & $N$ & $L^1$-error & order & $L^2$-error & order & $L^{\infty}$-error & order \\
  \hline
\multirow{5}{3em}{$k$ = 2}
	& 2 & 2.80e-01 & - & 3.42e-01 & - & 1.14e+00 & - \\
	& 3 & 6.86e-02 & 2.03 & 8.50e-02 & 2.01 & 3.06e-01 & 1.89 \\
	& 4 & 1.23e-02 & 2.48 & 1.49e-02 & 2.51 & 5.13e-02 & 2.58 \\
	& 5 & 2.59e-03 & 2.25 & 3.28e-03 & 2.18 & 1.72e-02 & 1.57 \\
	& 6 & 2.92e-04 & 3.15 & 3.63e-04 & 3.17 & 2.14e-03 & 3.01 \\
\hline
\multirow{5}{3em}{$k$ = 3}
	& 2 & 3.28e-02 & - & 3.96e-02 & - & 1.23e-01 & - \\
	& 3 & 2.78e-03 & 3.56 & 3.32e-03 & 3.58 & 1.16e-02 & 3.40 \\
	& 4 & 1.84e-04 & 3.91 & 2.27e-04 & 3.87 & 9.22e-04 & 3.66 \\
	& 5 & 1.45e-05 & 3.67 & 1.82e-05 & 3.64 & 9.63e-05 & 3.26 \\
	& 6 & 9.50e-07 & 3.93 & 1.18e-06 & 3.95 & 5.55e-06 & 4.12 \\
\hline
\end{tabular}
\end{table}

\begin{table}[!hbp]
    \centering
    \caption{Example \ref{exam:zk-accuracy}, accuracy test for the ZK equation \eqref{eq:zk-source}. Adaptive scheme, $k=2$ and $k=3$. $t=0.01$.}
    \label{tab:zk-accuracy-adaptive}
    \begin{tabular}{c|c|c|c|c|c}
      \hline
      & $\epsilon$ & DoF & $L^2$-error & $R_{\textrm{DoF}}$ & $R_{\epsilon}$ \\
      \hline
    \multirow{4}{3em}{$k=2$}
	& 1e-01 & 108 & 1.97e-01 & - & - \\
	& 1e-02 & 288 & 3.26e-02 & 1.83 & 0.78 \\
	& 1e-03 & 720 & 4.54e-03 & 2.15 & 0.86 \\
	& 1e-04 & 1656 & 6.01e-04 & 2.43 & 0.88 \\
    \hline
    \multirow{4}{3em}{$k=3$}
	& 1e-01 & 96 & 1.50e-01 & - & - \\
	& 1e-02 & 192 & 2.30e-02 & 2.71 & 0.82 \\
	& 1e-03 & 320 & 2.75e-03 & 4.16 & 0.92 \\
	& 1e-04 & 768 & 3.71e-04 & 2.29 & 0.87 \\
    \hline
    \end{tabular}
\end{table}

\begin{exam}[cylindrically symmetric solitons for the ZK equation]\label{exam:zk-soliton}
    We investigate a cylindrically symmetric solitary solution and its evolutions as well as interactions for the ZK equation \cite{iwasaki1990cylindrical,feng1999conservative}
    \begin{equation}\label{eq:zk-example-soliton}
        u_t + (3u^2)_x + \sigma(u_{xxx} + u_{xyy}) = 0.
    \end{equation}
    This type of solitary solution, also called the bell-shaped pulse, has the initial value
    \begin{equation}
        u(x,y,t) = \frac{c}{3}\sum_{n=1}^{10}a_n\brac{\cos(2n\arccot(\frac{\sqrt{c}}{2}r))-1}
    \end{equation}
    where $c$ is the velocity of the soliton wave solution and $r=\sqrt{(x-x_0)^2+(y-y_0)^2}$. The coefficients are \cite{iwasaki1990cylindrical}
    \begin{align}\label{eq:zk-soliton-coefficient}
		& a_1 = -1.25529873, \quad a_2 = 0.21722635, \quad a_3 = 0.06452543, \nn \\
		& a_4 = 0.00540862, \quad a_5 = -0.00332515, \quad a_6 = -0.00281281,  \\
		& a_7 = -0.00138352, \quad a_8 = -0.00070289, \quad a_9 = -0.00020451, \nn \\
		& a_{10} = -0.00003053. \nn
    \end{align}
    In this test, we take $x_0=y_0=0.5$ and $\sigma=1/1024$ in \eqref{eq:zk-example-soliton}. The stable propagation of a single pulse is presented in Figure \ref{fig:zk-single}. The active elements automatically move with the soliton and the soliton shape is well preserved in the time evolution.	
\end{exam}

\begin{figure}
    \centering
    \subfigure[numerical solutions at $t=0$]{
    \begin{minipage}[b]{0.46\textwidth}
    \includegraphics[width=1\textwidth]{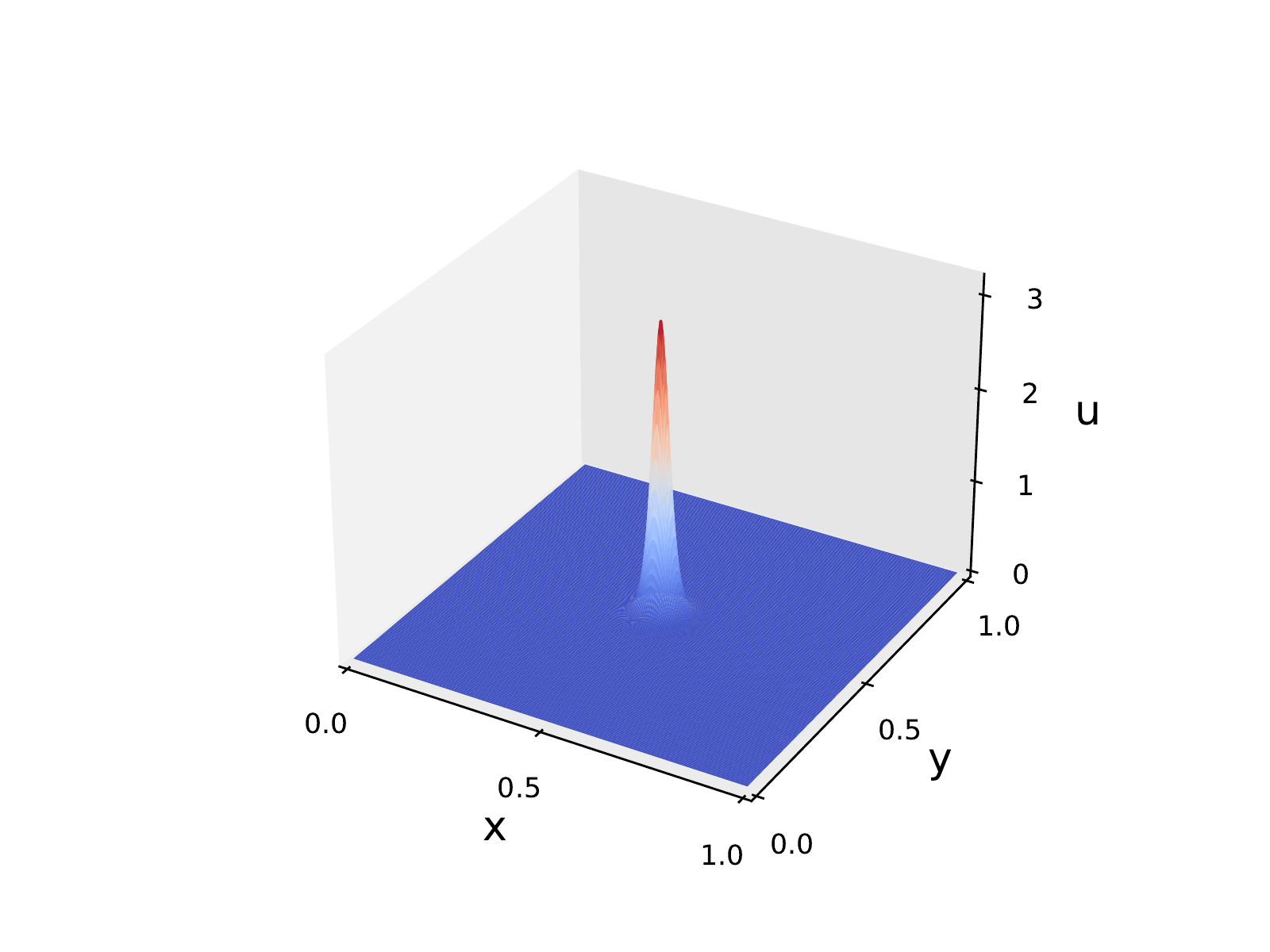}
    \end{minipage}
    }
    \subfigure[active elements at $t=0$]{
    \begin{minipage}[b]{0.46\textwidth}
    \includegraphics[width=1\textwidth]{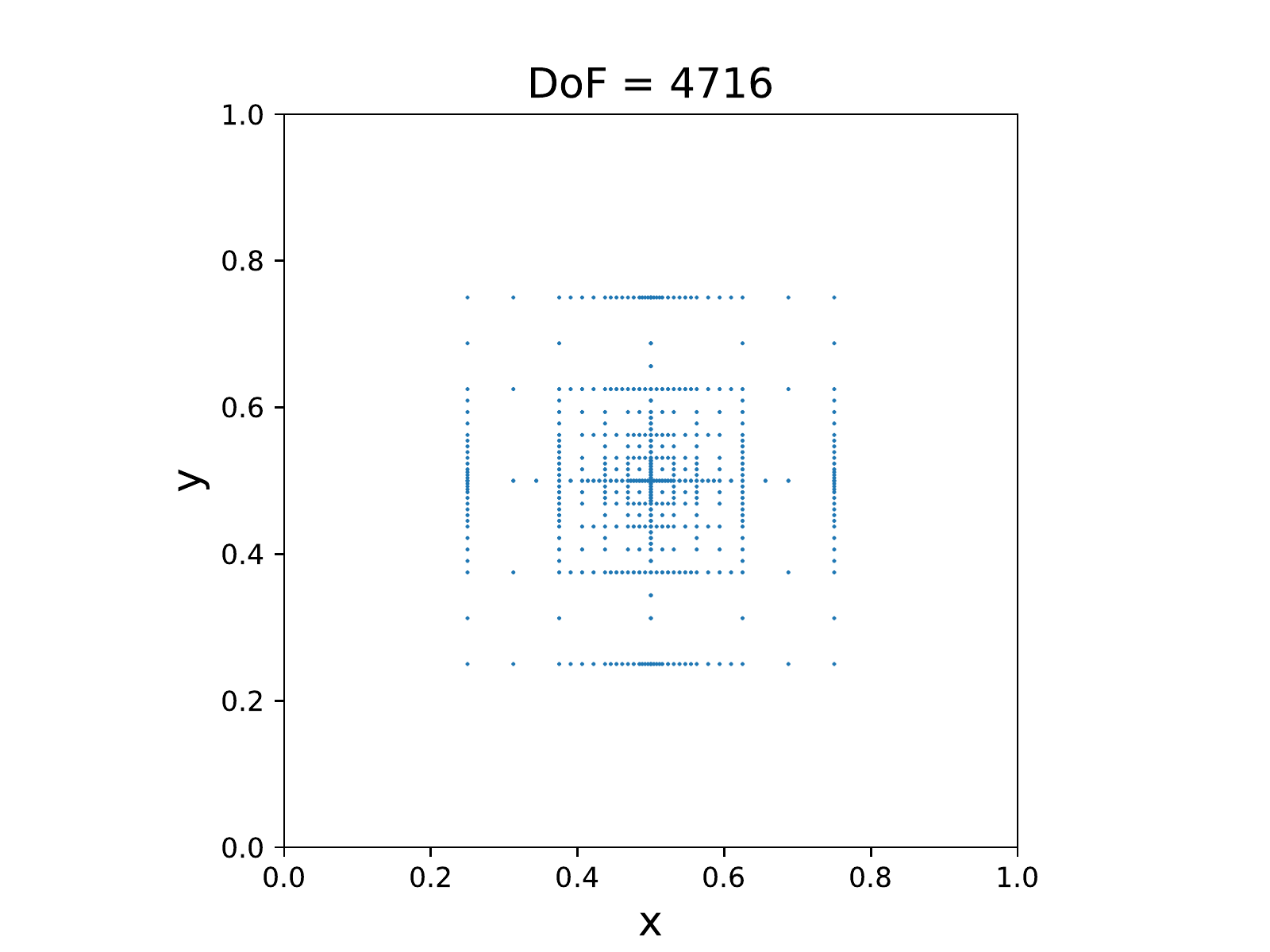}
    \end{minipage}
    }
    \bigskip
    \subfigure[numerical solutions at $t=0.1$]{
    \begin{minipage}[b]{0.46\textwidth}
    \includegraphics[width=1\textwidth]{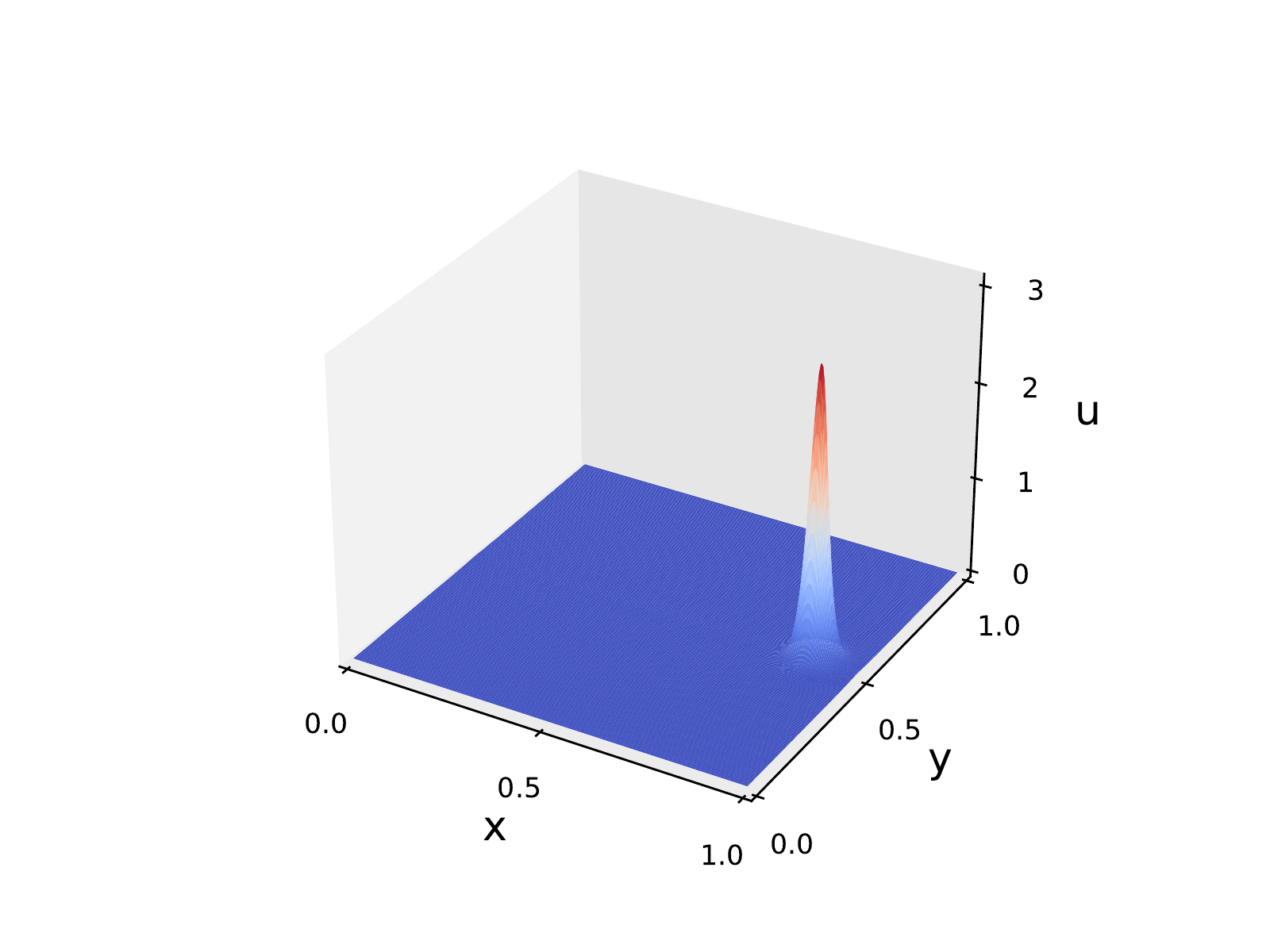}
    \end{minipage}
    }
    \subfigure[active elements at $t=0.1$]{
    \begin{minipage}[b]{0.46\textwidth}
    \includegraphics[width=1\textwidth]{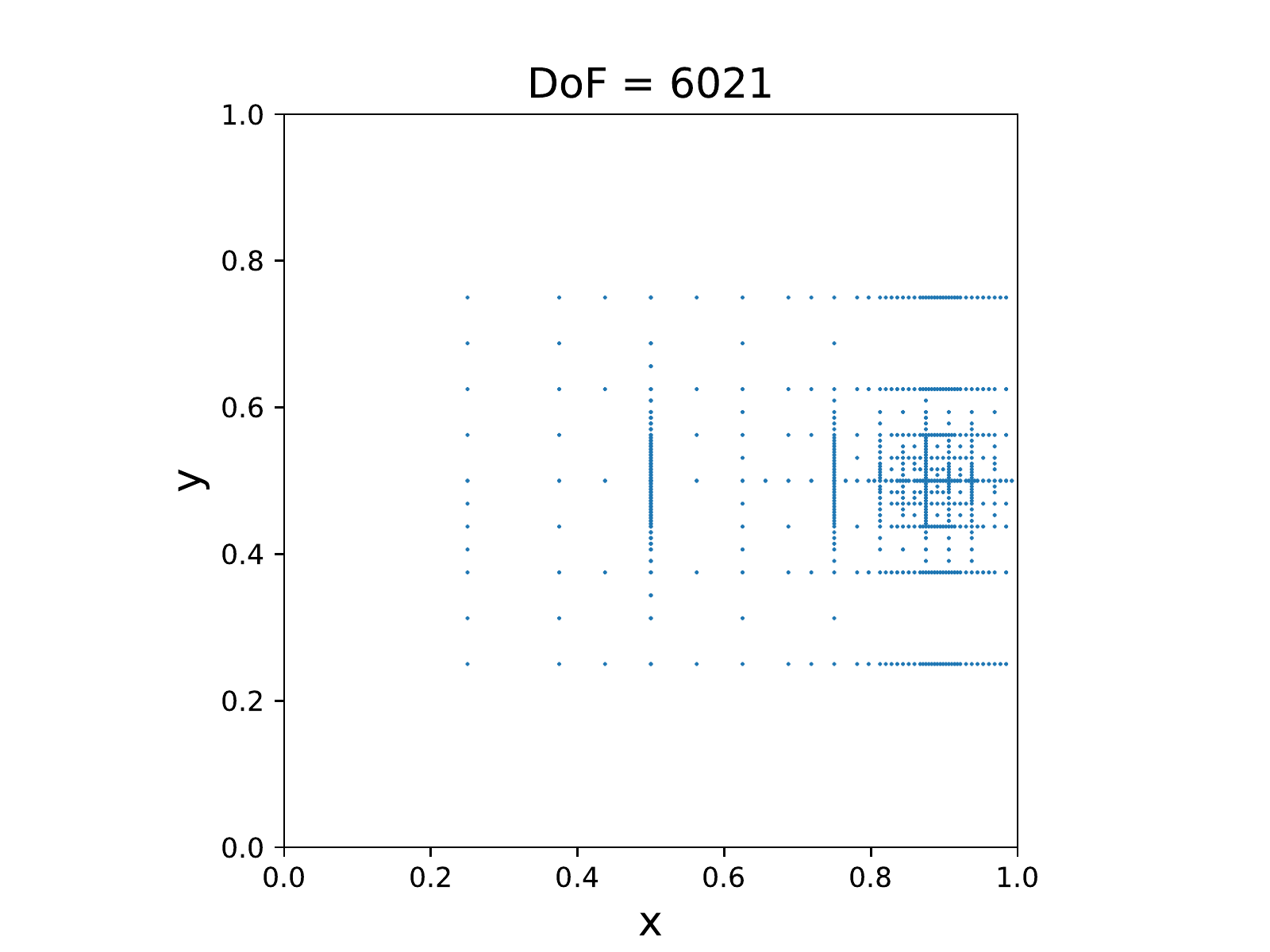}
    \end{minipage}
    }
    \bigskip
    \subfigure[numerical solutions at $t=0.2$]{
    \begin{minipage}[b]{0.46\textwidth}
    \includegraphics[width=1\textwidth]{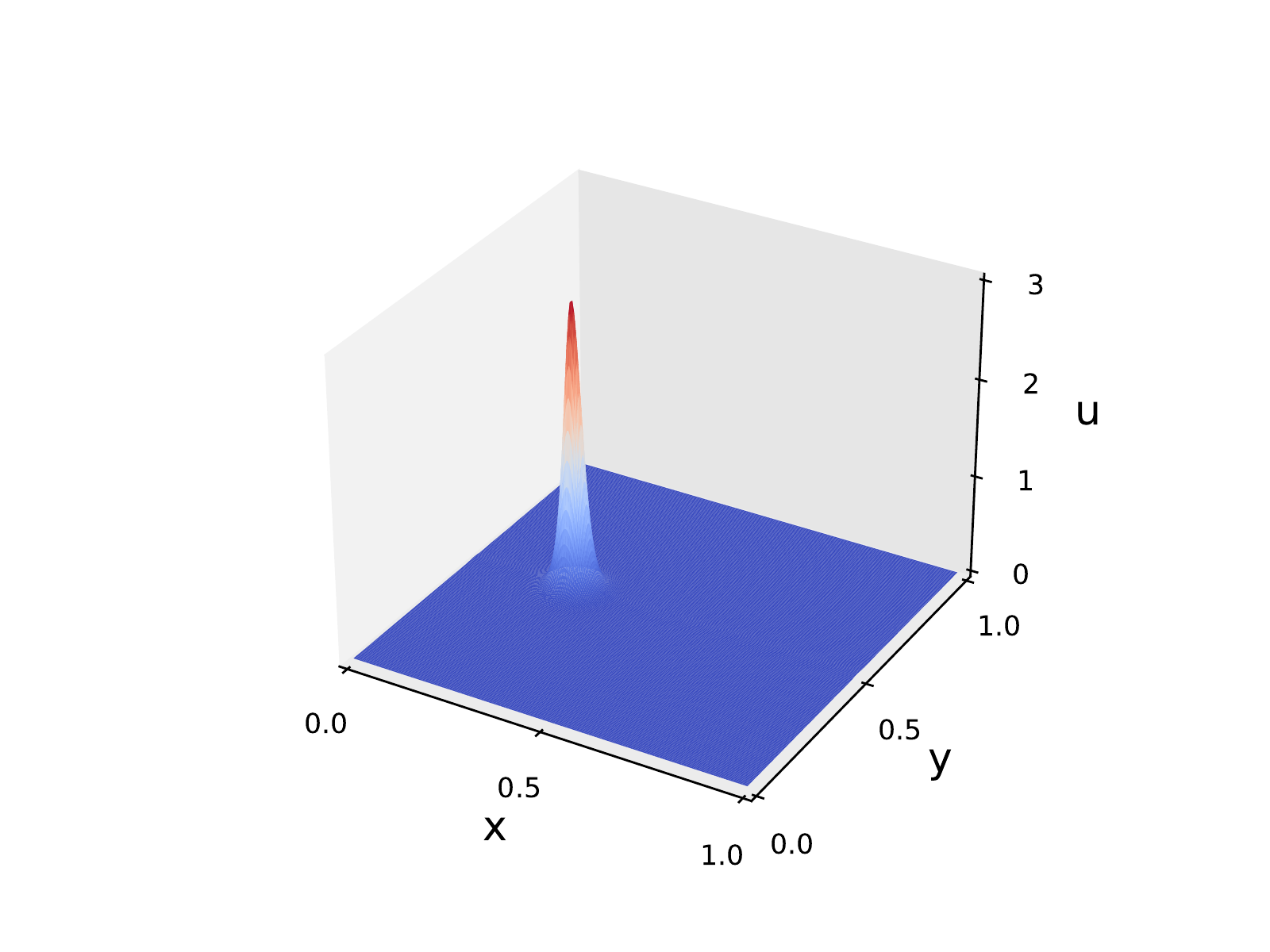}
    \end{minipage}
    }
    \subfigure[active elements at $t=0.2$]{
    \begin{minipage}[b]{0.46\textwidth}
    \includegraphics[width=1\textwidth]{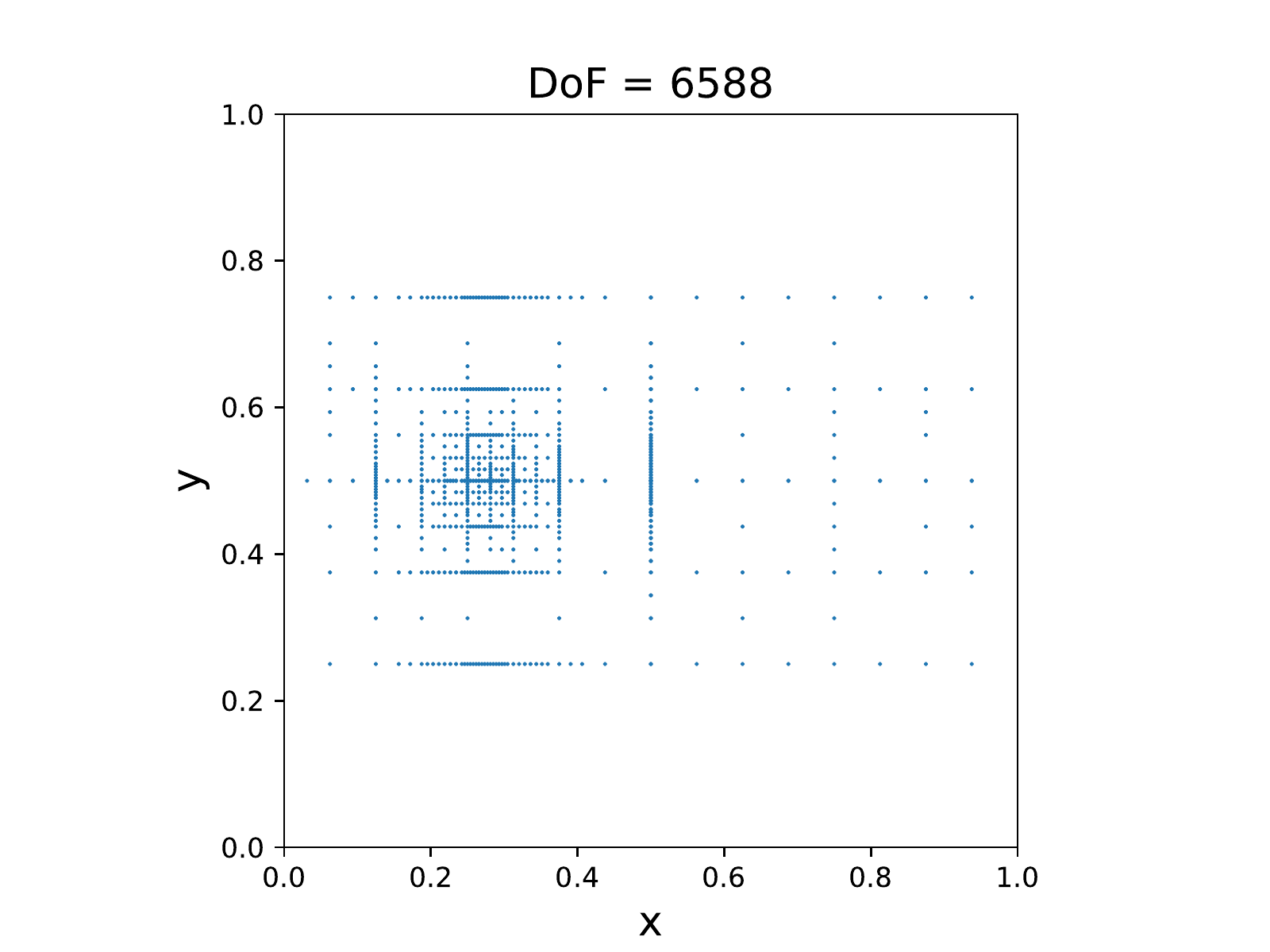}
    \end{minipage}
    }
    \caption{Example \ref{exam:zk-soliton}: ZK equation, single soliton. $t=0$, $0.1$ and $0.2$. $N=8$ and $\epsilon=10^{-4}$. Left: numerical solutions; right: active elements.}
    \label{fig:zk-single}
\end{figure}

\begin{exam}[soliton collisions for the ZK equation]\label{exam:zk-double-soliton}
    Next, we proceed to show the collision of two pulses with the initial condition:
    \begin{equation}
        u(x,y,t) = \sum_{j=1}^2\frac{c_j}{3}\sum_{n=1}^{10}a_n\brac{\cos(2n\arccot(\frac{\sqrt{c_j}}{2}r_j))-1}
    \end{equation}
    with $r_j=\sqrt{(x-x_j)^2+(y-y_j)^2}$, $j=1,2$ and the coefficients $a_n$ for $n=1,\dots,10$ are the same as those given in \eqref{eq:zk-soliton-coefficient}. Here, we simulate two cases with different parameters. The first case is the direct collision of two dissimilar pulses solution. The parameters are $c_1 = 4$ and $c_2 = 1$, $(x_1,y_1)=(1/2,1/2)$, $(x_2,y_2)=(5/8,1/2)$ and $\epsilon=1/4096$. The numerical results are shown in Figure \ref{fig:zk-double-case3}. It is observed that two pulses merge with each other and form a profile with only one-peak. Then two pulses with different amplitudes reappear by emitting ripples \cite{iwasaki1990cylindrical}. The evolution of the numerical solutions is similar to Figure 17 in \cite{xu2005local}. The second case is the deviated collision of two dissimilar pulses solution. The parameters are $c_1 = 4$ and $c_2 = 1$, $(x_1,y_1)=(1/4,7/16)$, $(x_2,y_2)=(1/2,1/2)$, $\epsilon=1/1024$. The numerical results are presented in Figure \ref{fig:zk-double-case4}. The performance is similar to Figure 18 in \cite{xu2005local}.
\end{exam}

\begin{figure}
    \centering
    \subfigure[numerical solutions at $t=0$]{
    \begin{minipage}[b]{0.46\textwidth}
    \includegraphics[width=1\textwidth]{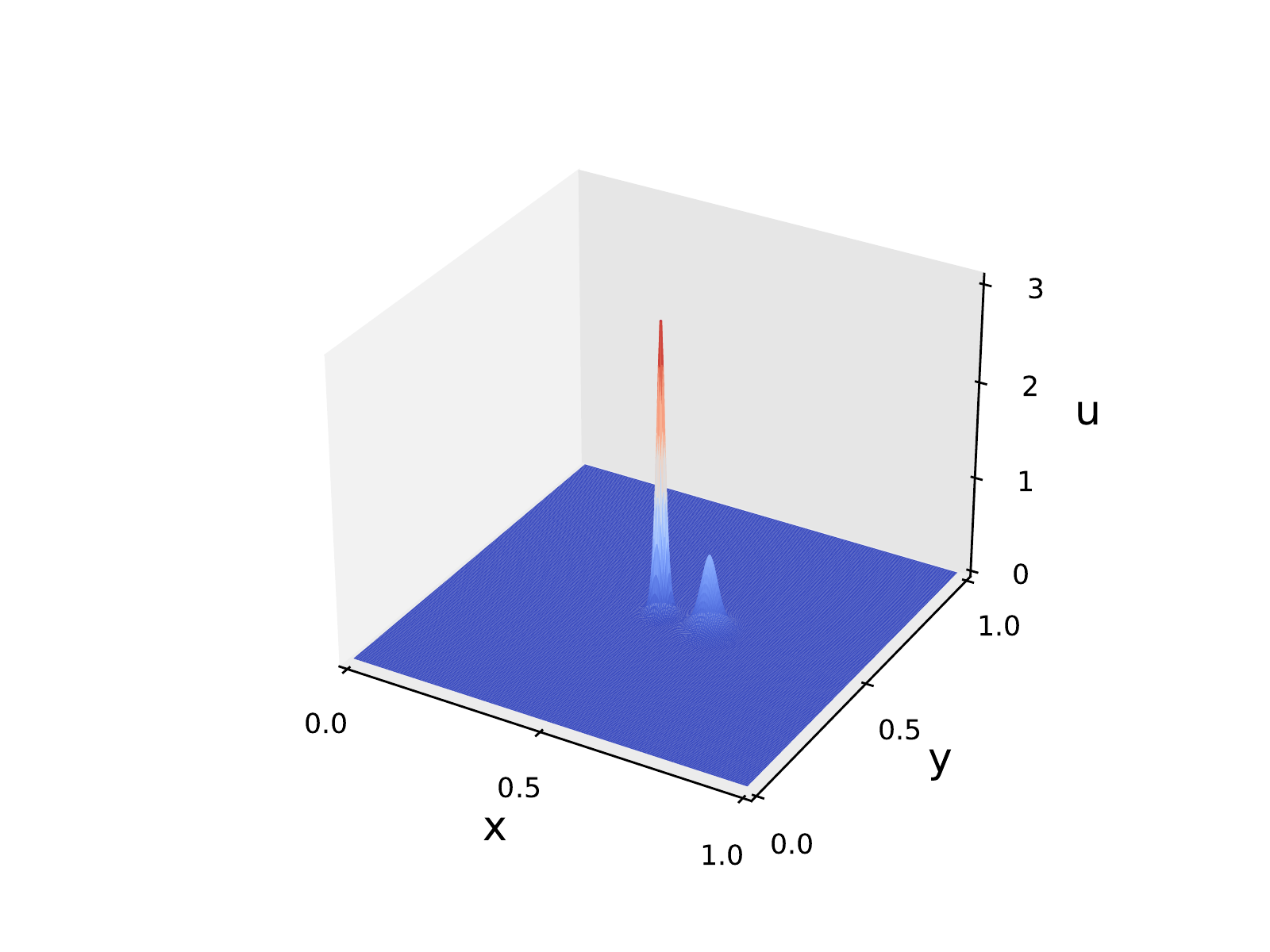}
    \end{minipage}
    }
    \subfigure[active elements at $t=0$]{
    \begin{minipage}[b]{0.46\textwidth}
    \includegraphics[width=1\textwidth]{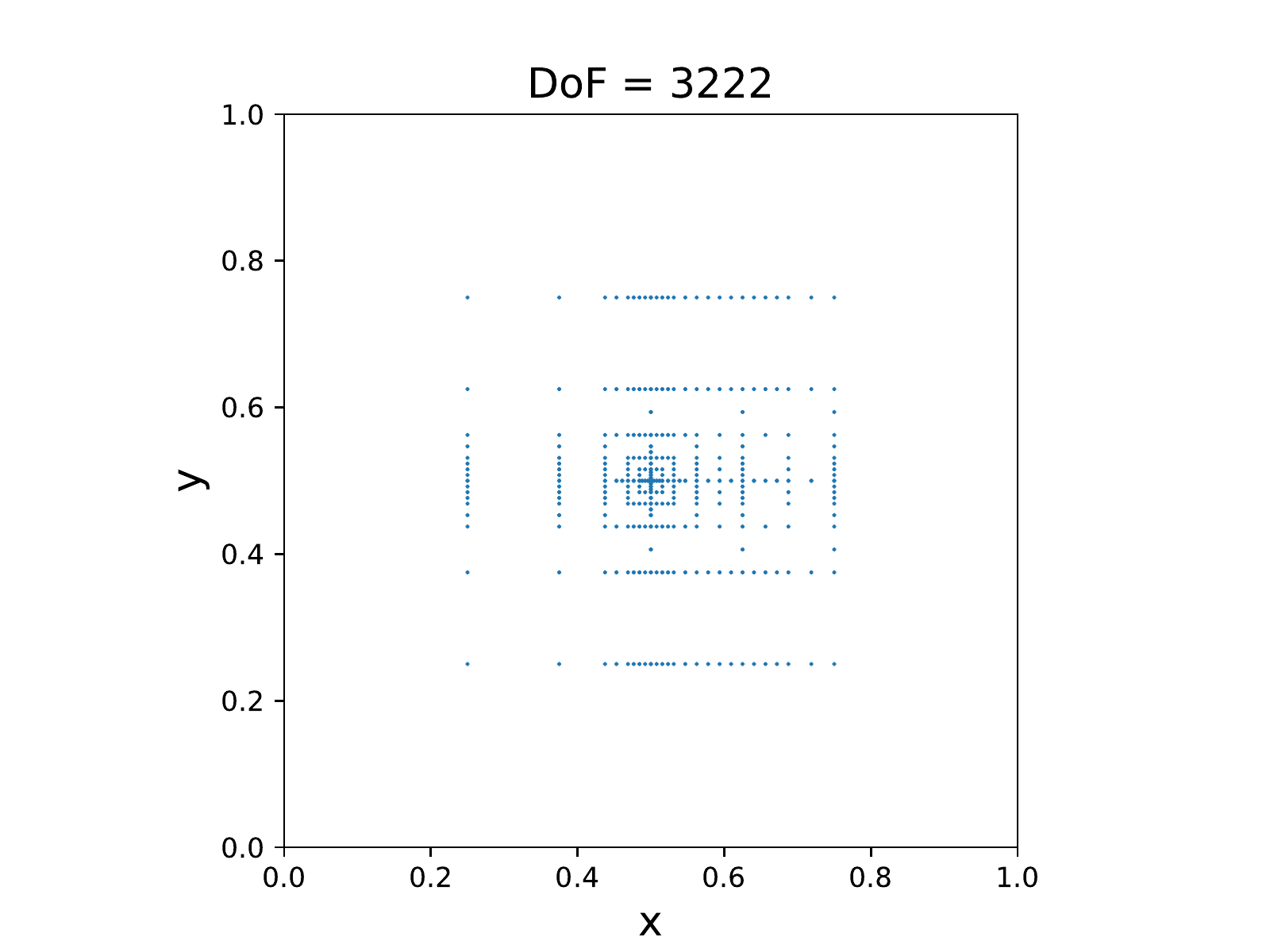}
    \end{minipage}
    }
    \bigskip
    \subfigure[numerical solutions at $t=0.04$]{
    \begin{minipage}[b]{0.46\textwidth}
    \includegraphics[width=1\textwidth]{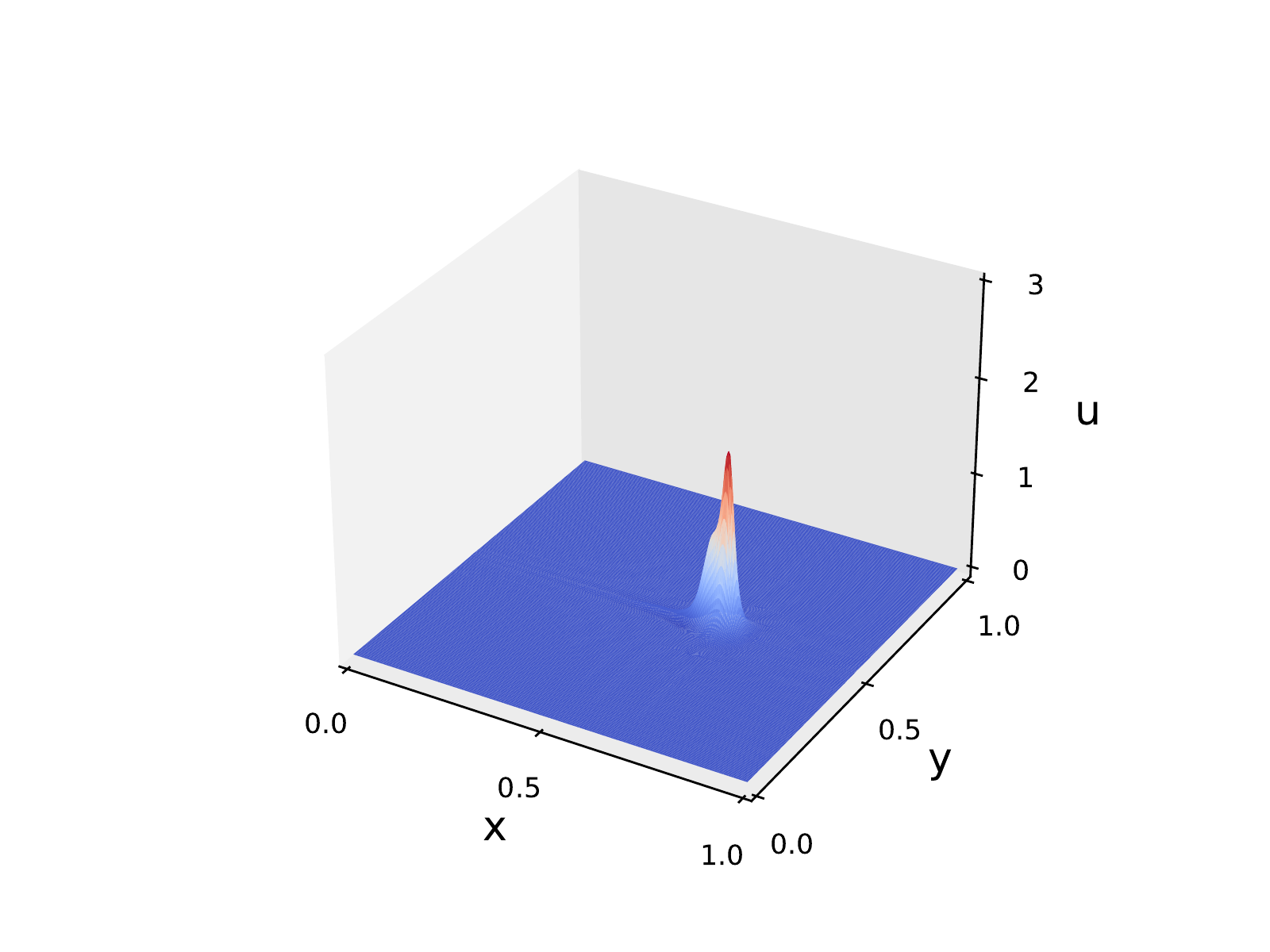}
    \end{minipage}
    }
    \subfigure[active elements at $t=0.04$]{
    \begin{minipage}[b]{0.46\textwidth}
    \includegraphics[width=1\textwidth]{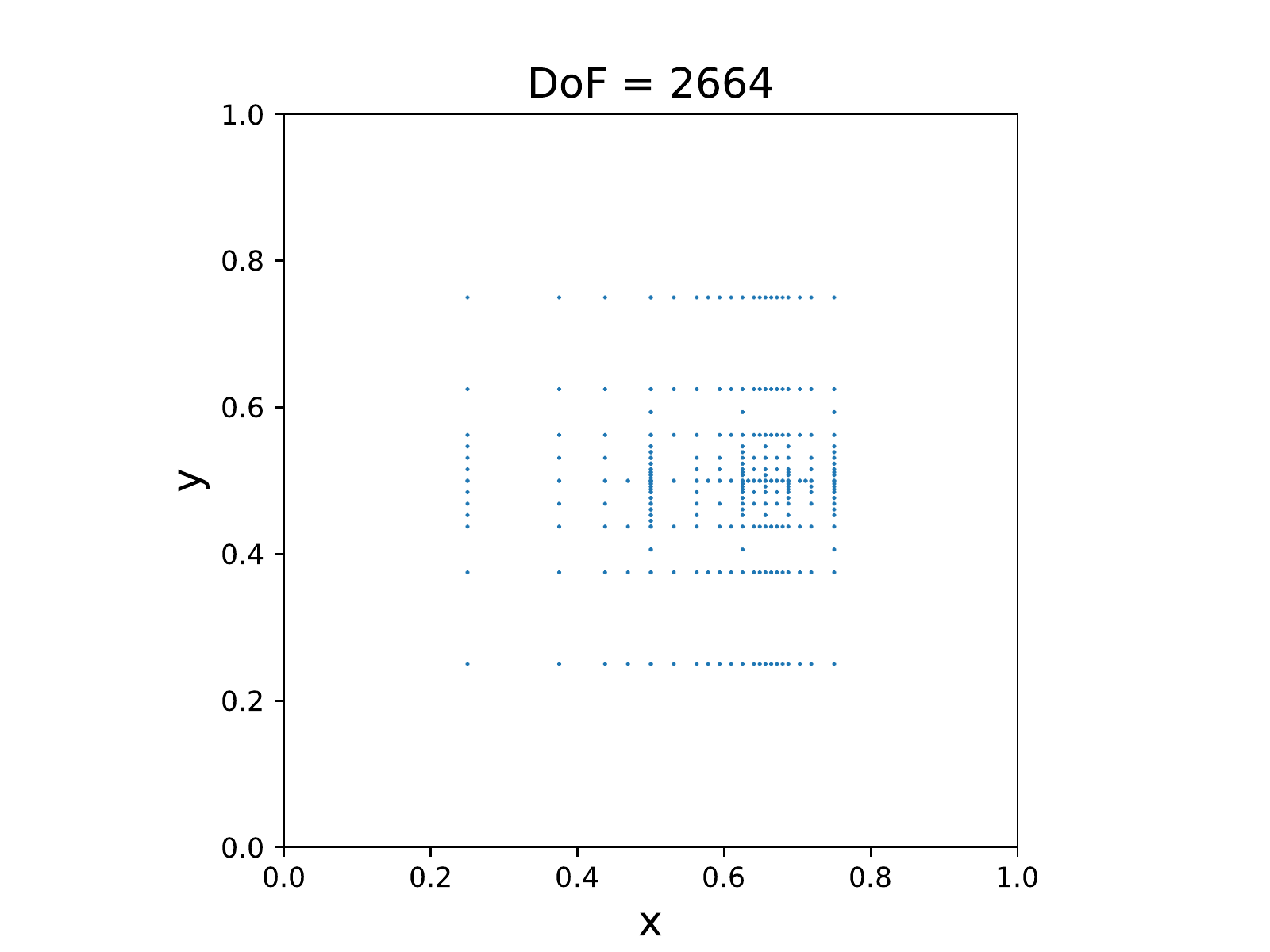}
    \end{minipage}
    }
    \bigskip
    \subfigure[numerical solutions at $t=0.1$]{
    \begin{minipage}[b]{0.46\textwidth}
    \includegraphics[width=1\textwidth]{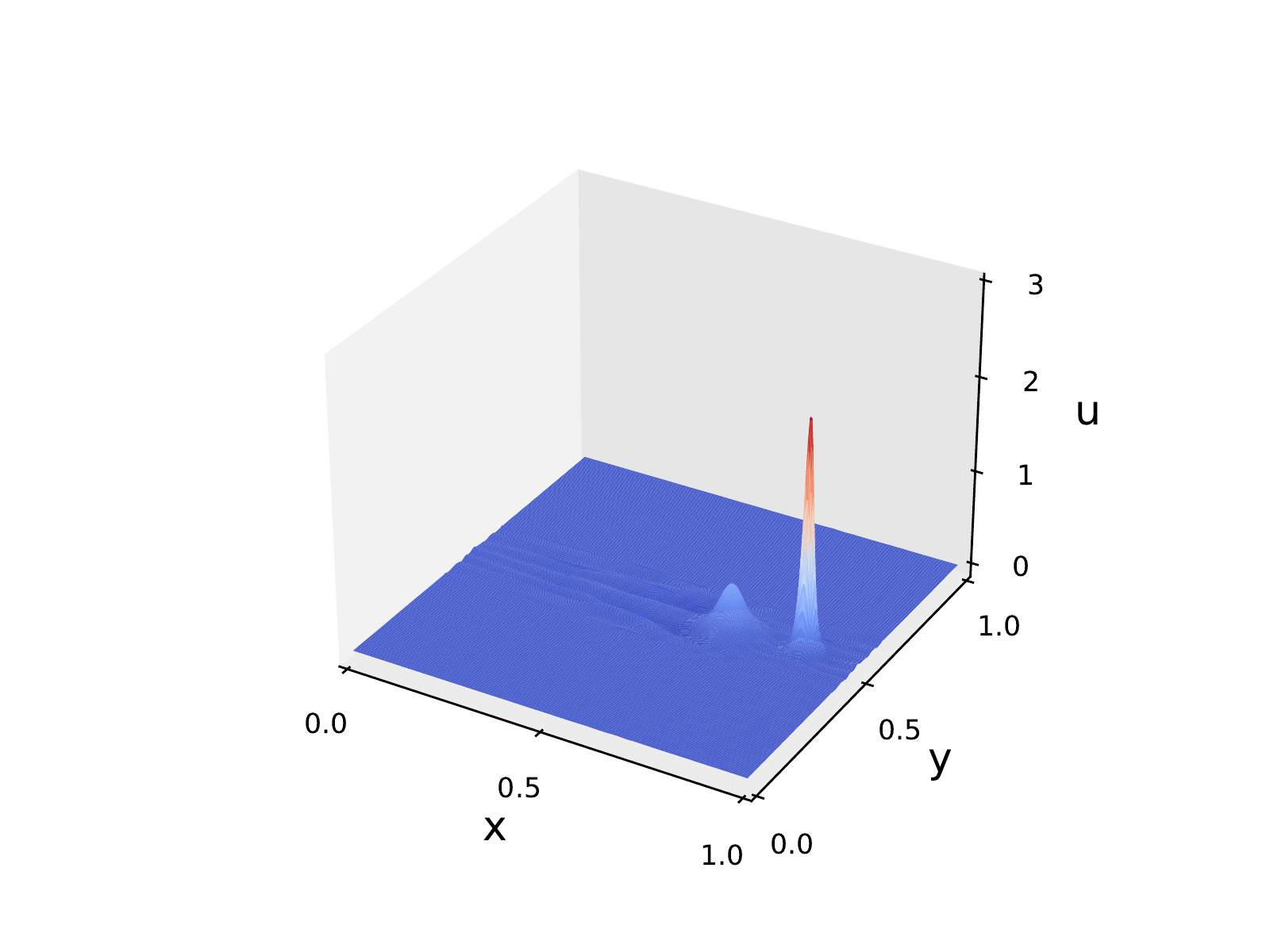}
    \end{minipage}
    }
    \subfigure[active elements at $t=0.1$]{
    \begin{minipage}[b]{0.46\textwidth}
    \includegraphics[width=1\textwidth]{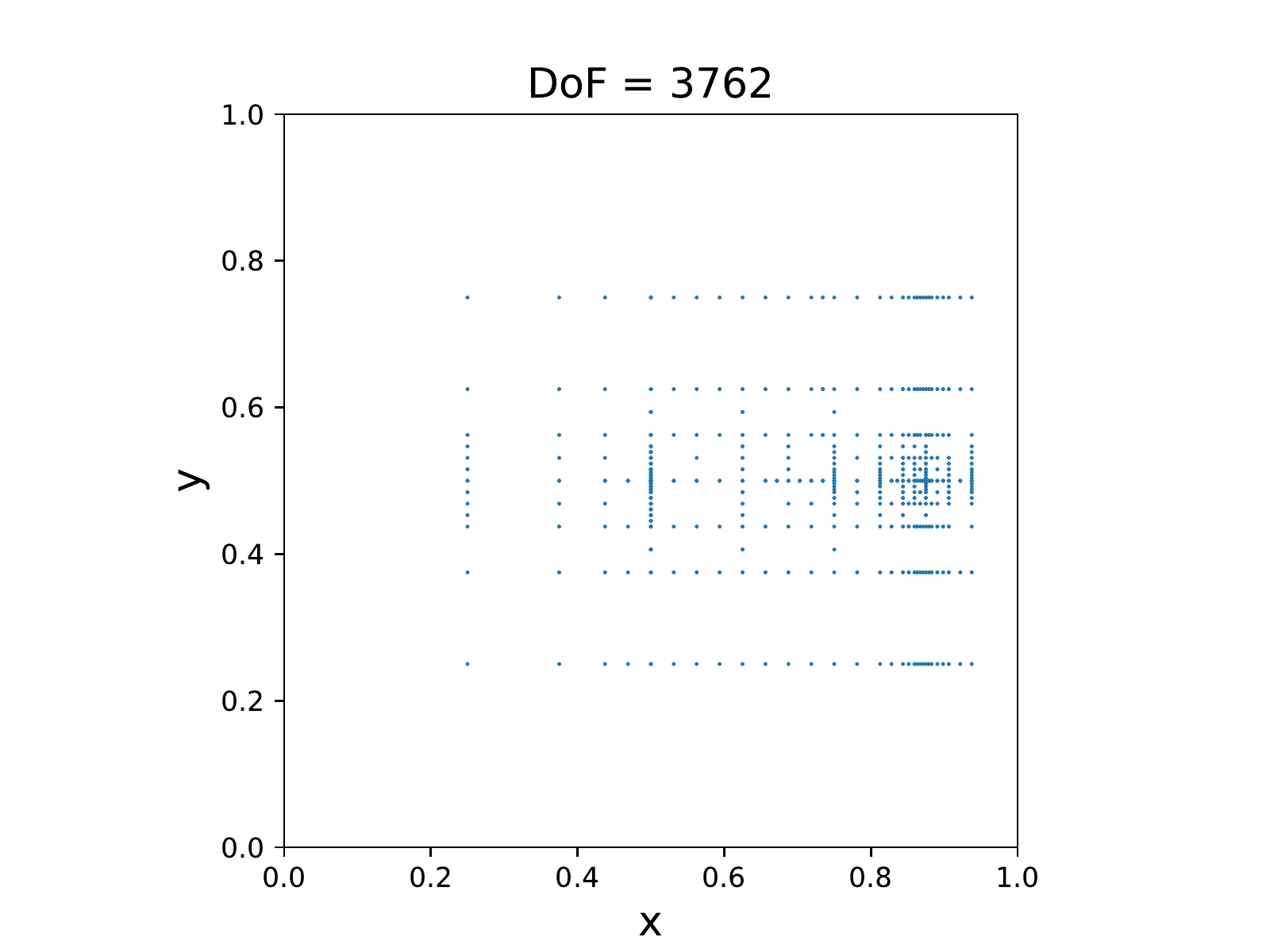}
    \end{minipage}
    }
    \caption{Example \ref{exam:zk-soliton}: ZK equation, direct collision of two dissimilar pulses solution. $t=0$, 0.04 and 0.1. $N=8$ and $\epsilon=10^{-4}$. Left: numerical solutions; right: active elements.}
    \label{fig:zk-double-case3}
\end{figure}

\begin{figure}
    \centering
    \subfigure[numerical solutions at $t=0$]{
    \begin{minipage}[b]{0.46\textwidth}
    \includegraphics[width=1\textwidth]{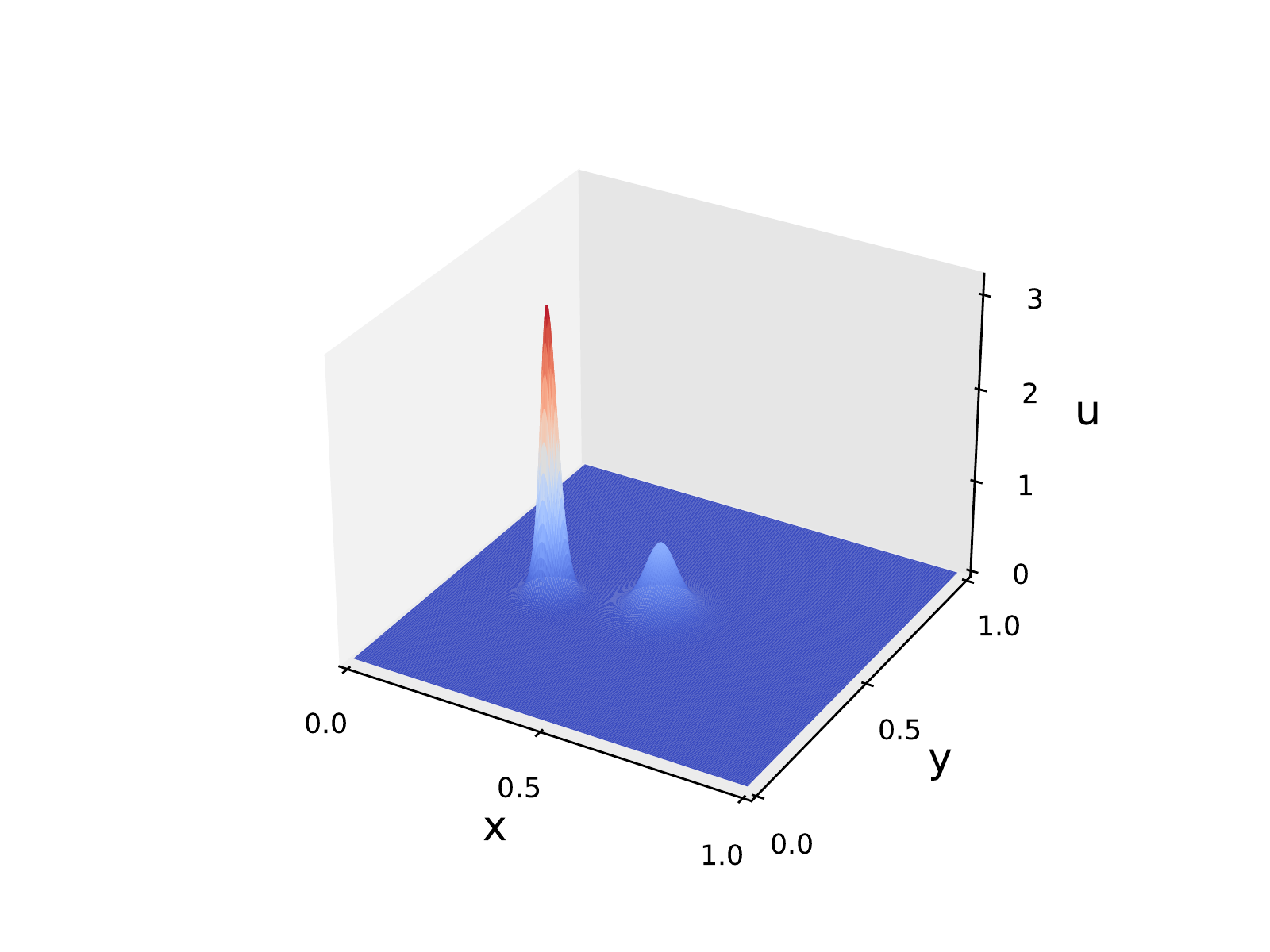}
    \end{minipage}
    }
    \subfigure[active elements at $t=0$]{
    \begin{minipage}[b]{0.46\textwidth}
    \includegraphics[width=1\textwidth]{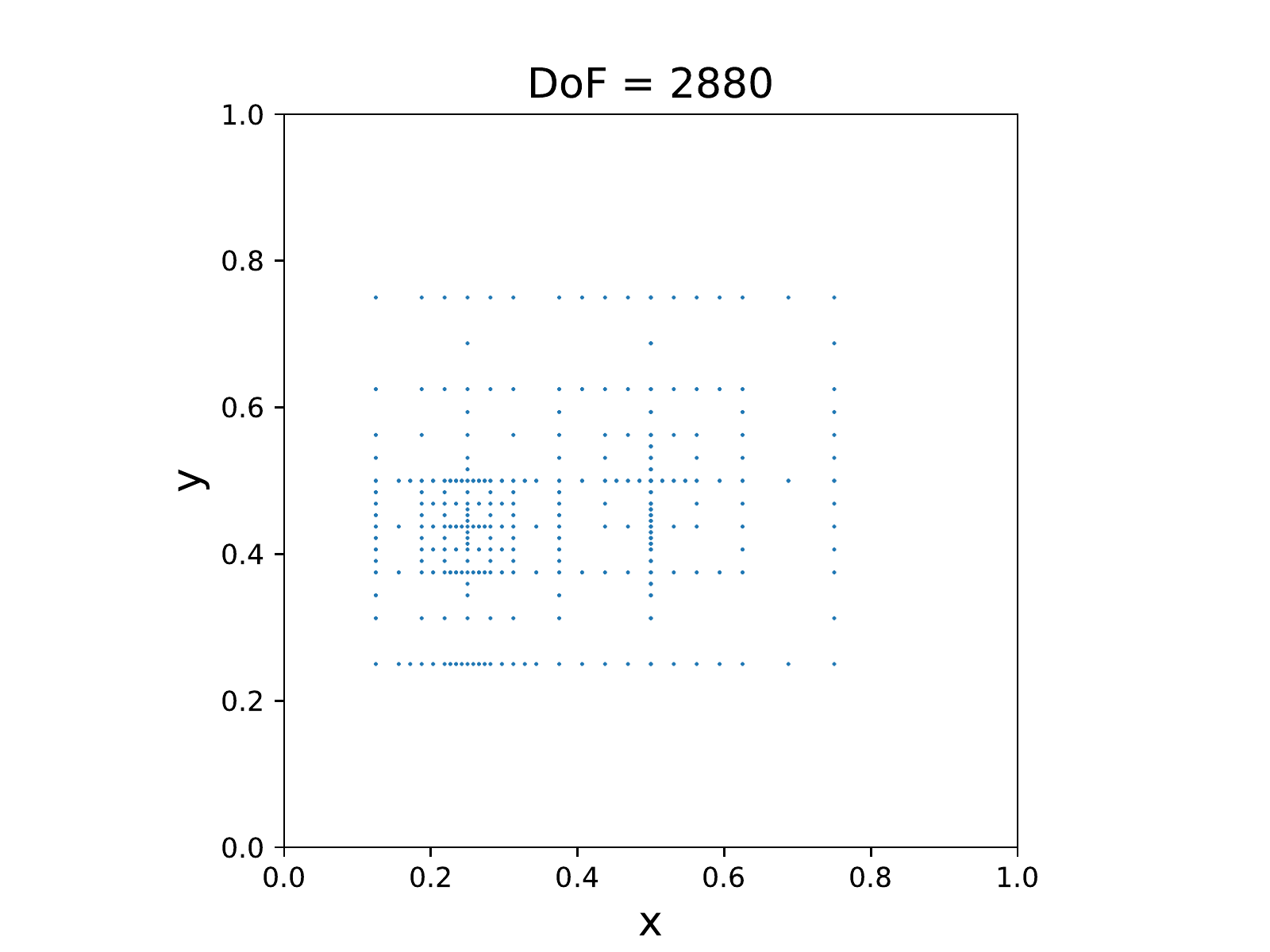}
    \end{minipage}
    }
    \bigskip
    \subfigure[numerical solutions at $t=0.1$]{
    \begin{minipage}[b]{0.46\textwidth}
    \includegraphics[width=1\textwidth]{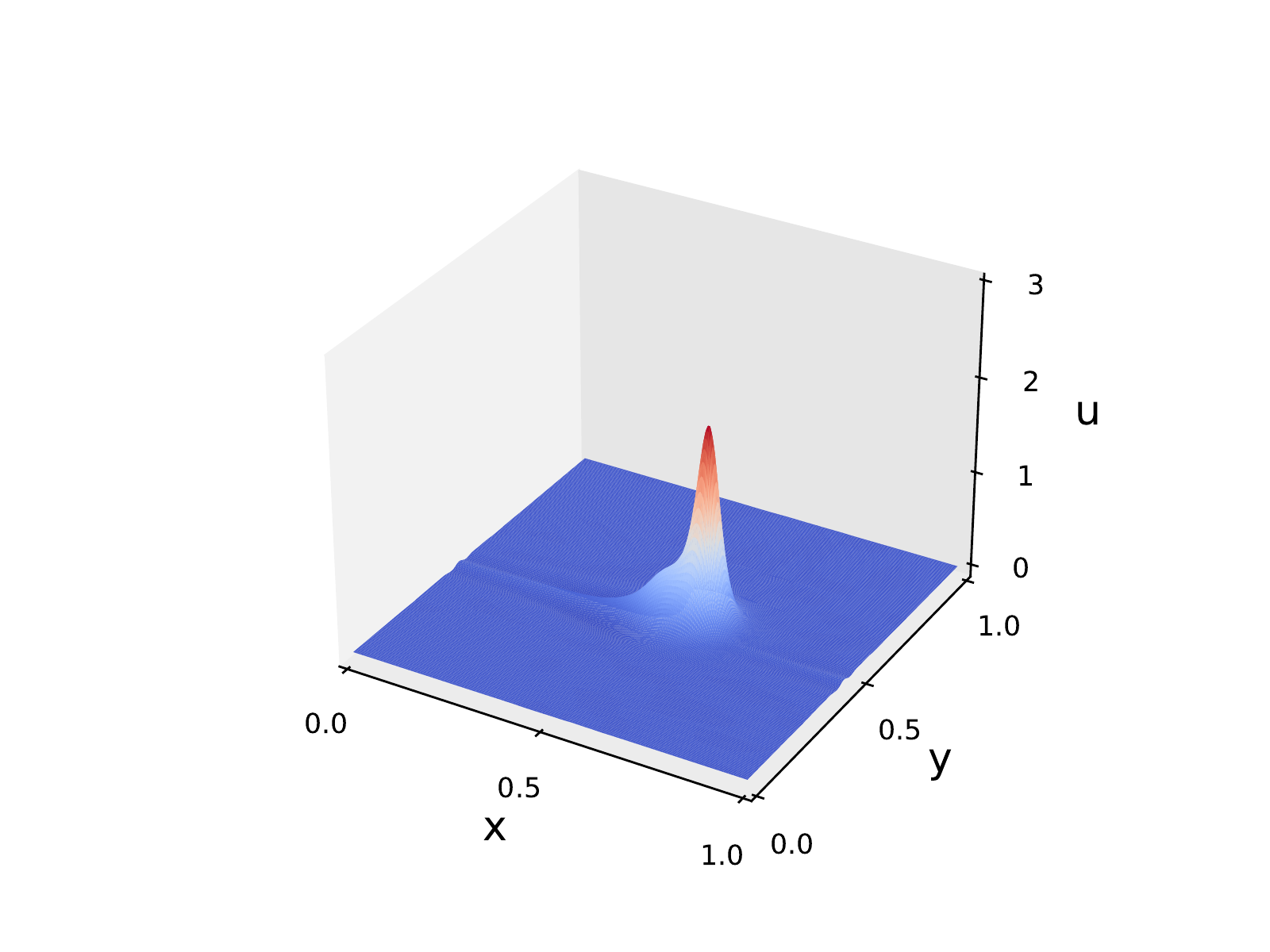}
    \end{minipage}
    }
    \subfigure[active elements at $t=0.1$]{
    \begin{minipage}[b]{0.46\textwidth}
    \includegraphics[width=1\textwidth]{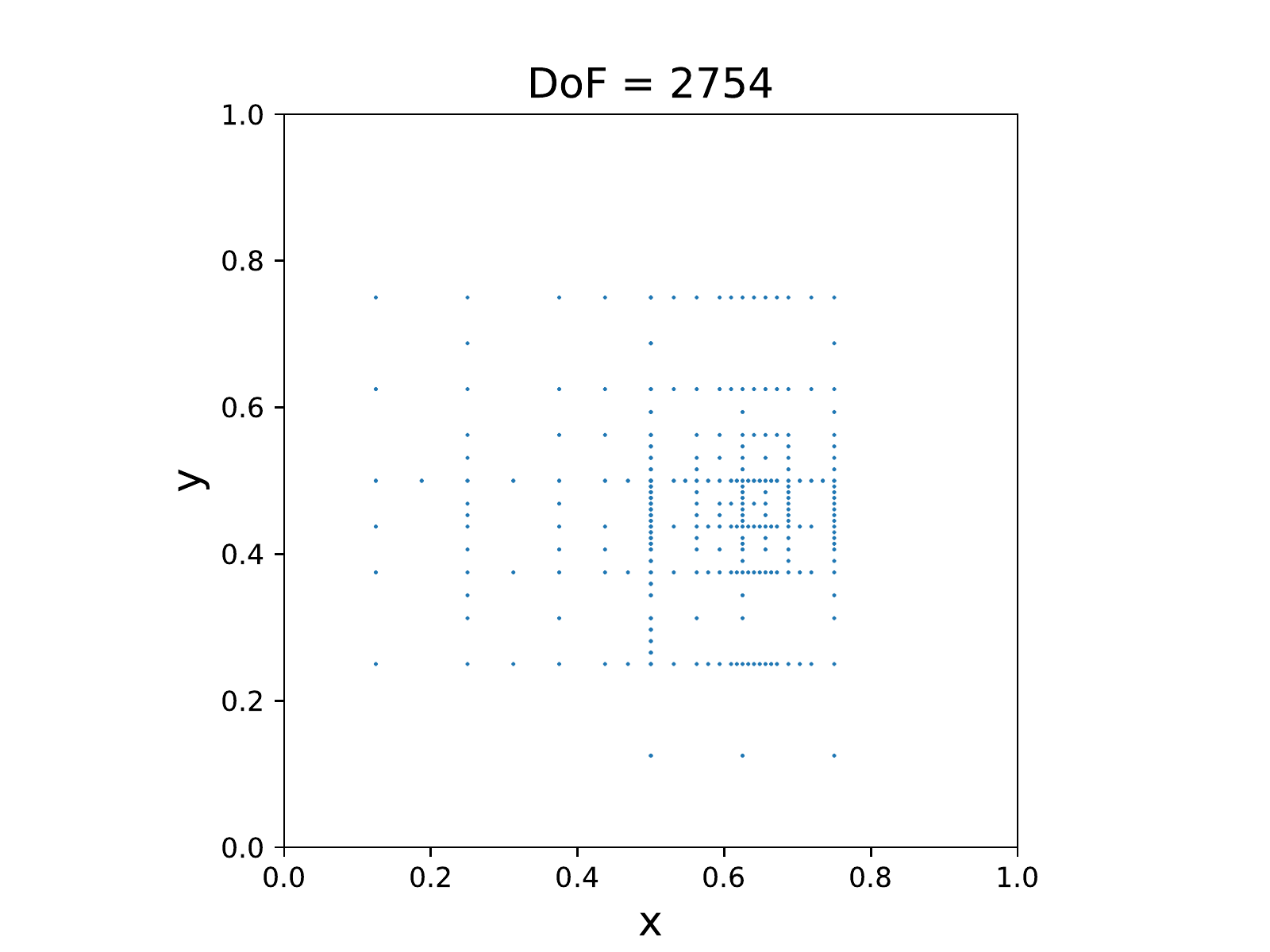}
    \end{minipage}
    }
    \bigskip
    \subfigure[numerical solutions at $t=0.15$]{
    \begin{minipage}[b]{0.46\textwidth}
    \includegraphics[width=1\textwidth]{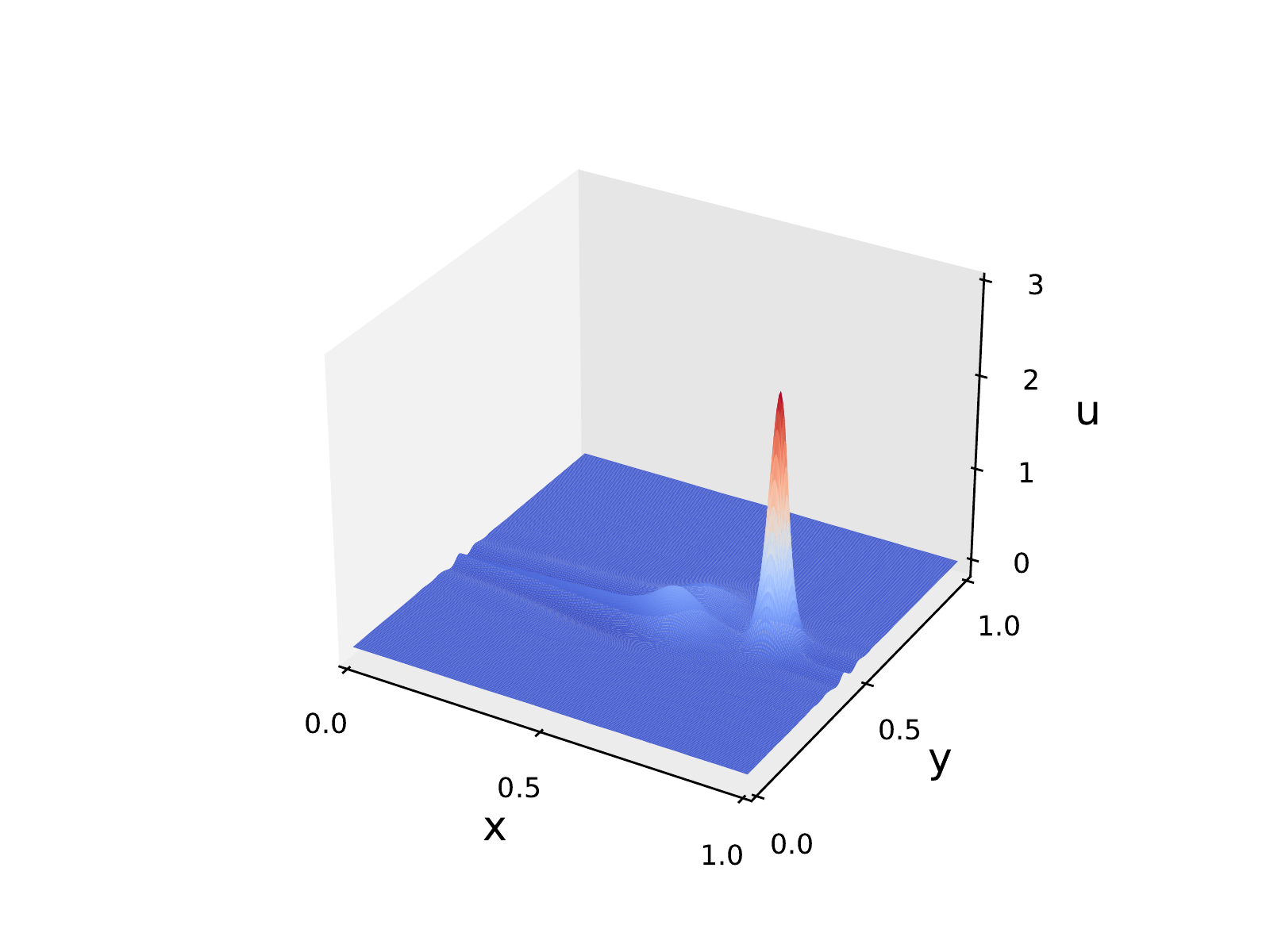}
    \end{minipage}
    }
    \subfigure[active elements at $t=0.15$]{
    \begin{minipage}[b]{0.46\textwidth}
    \includegraphics[width=1\textwidth]{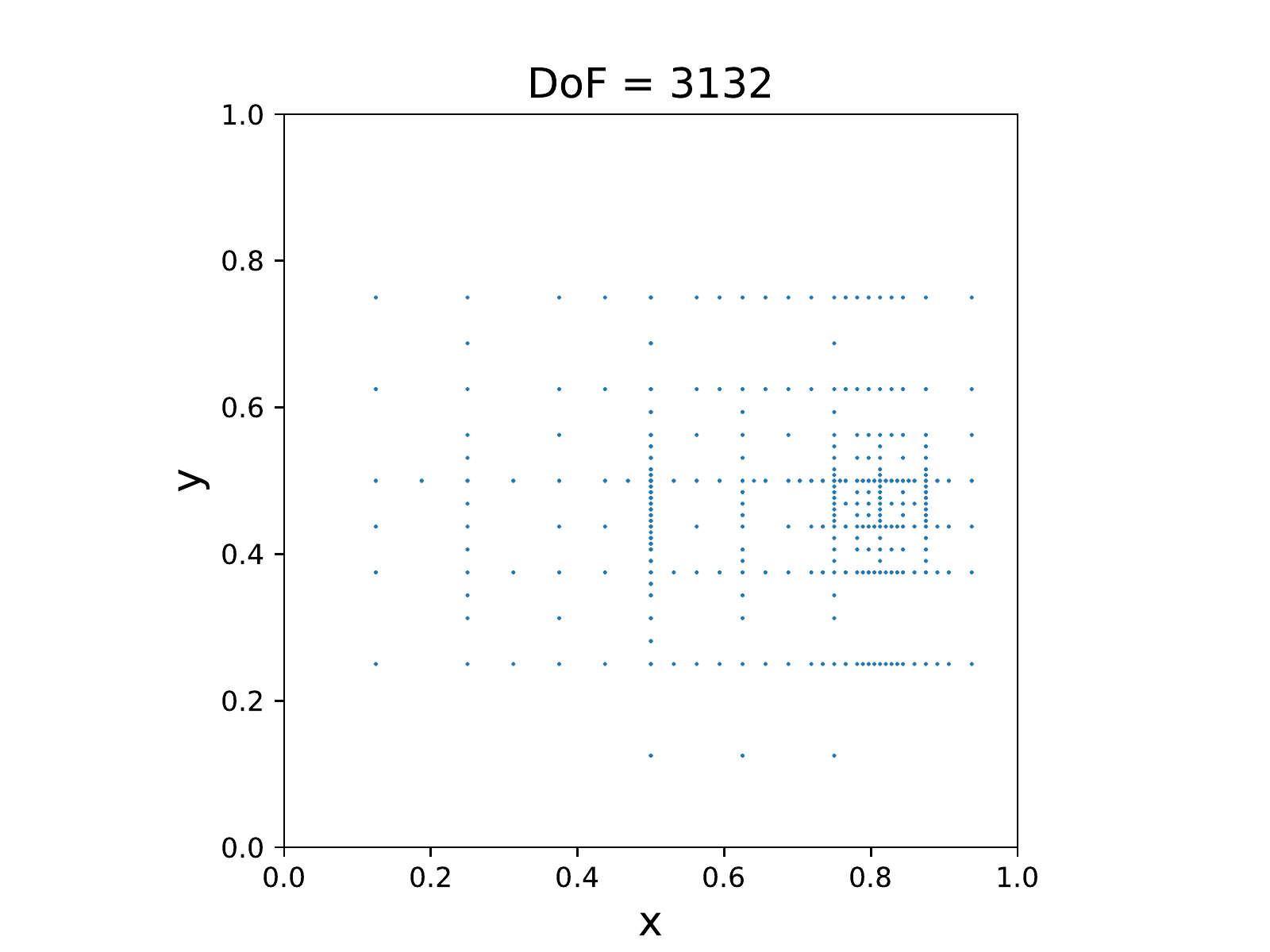}
    \end{minipage}
    }
    \caption{Example \ref{exam:zk-soliton}: ZK equation, deviated collision of two dissimilar pulses solution. $t=0$, 0.1 and 0.15. $N=8$ and $\epsilon=10^{-4}$. Left: numerical solutions; right: active elements.}
    \label{fig:zk-double-case4}
\end{figure}

\begin{exam}[lump solitons for the ZK equation]\label{exam:zk-lump}
    We consider the lump solutions for the ZK equation \cite{jorge2005evolution}:
    \begin{equation}
        u_t + (3u^2)_x + \sigma(u_{xxx} + u_{xyy}) = 0,
    \end{equation}
    with the initial condition
    \begin{equation}
        u(x,y,0) = A e^{-\kappa((x-x_0)^2+(y-y_0)^2)}
    \end{equation}
    with $\sigma={1}/{6400}$, $A = 0.4$, $\kappa=320$ and $(x_0,y_0)=(\half,\half)$.

    The numerical solutions and the active elements are presented in Figure \ref{fig:zk-lump}. It is observed that the lump initial condition evolves into a lump soliton followed by a tail of radiation within a caustic. The solution profiles are comparable to \cite{jorge2005evolution}.
\end{exam}

\begin{figure}
    \centering
    \subfigure[numerical solutions at $t=0$]{
    \begin{minipage}[b]{0.46\textwidth}
    \includegraphics[width=1\textwidth]{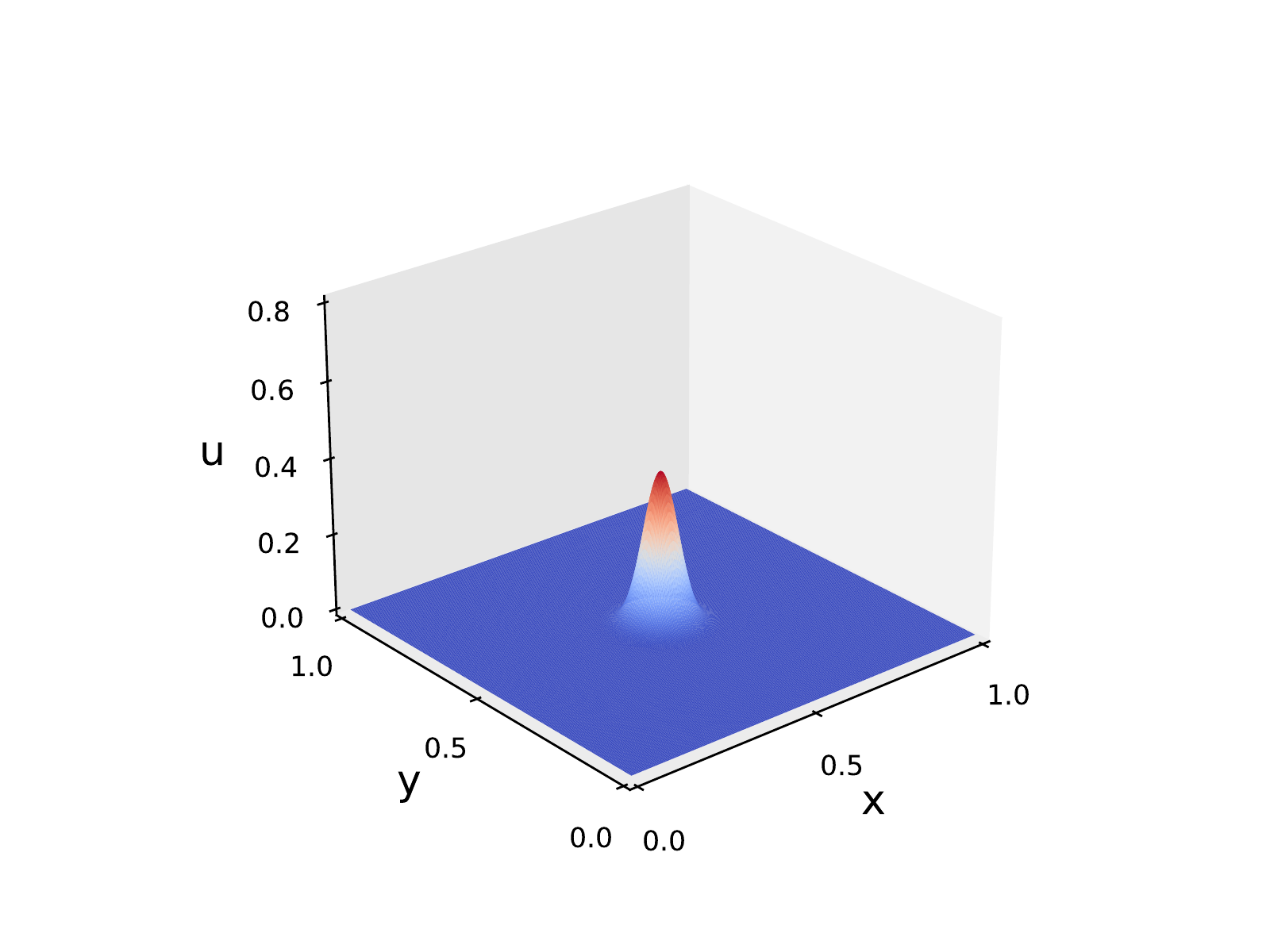}
    \end{minipage}
    }
    \subfigure[active elements at $t=0$]{
    \begin{minipage}[b]{0.46\textwidth}
    \includegraphics[width=1\textwidth]{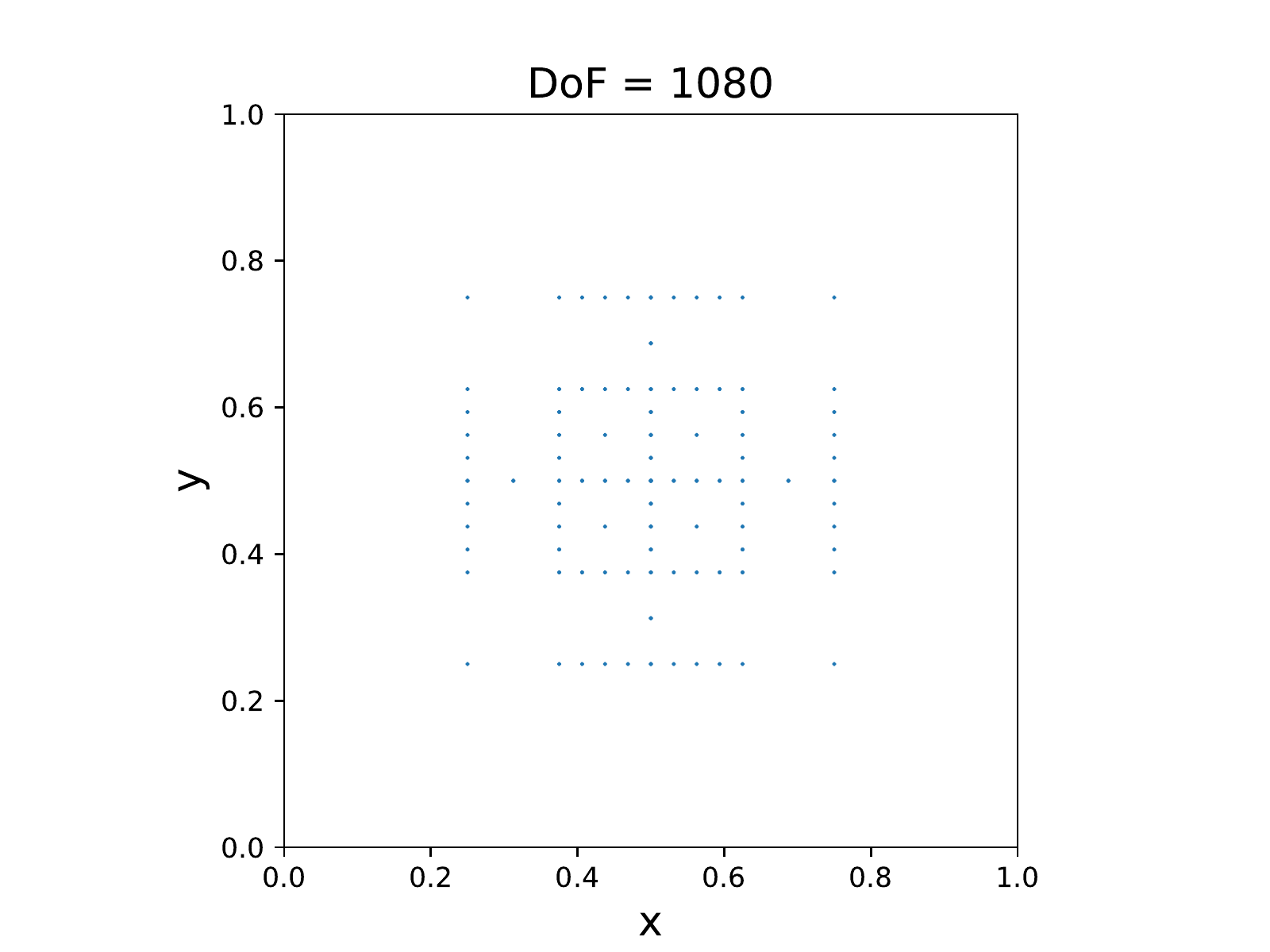}
    \end{minipage}
    }
    \bigskip
    \subfigure[numerical solutions at $t=0.2$]{
    \begin{minipage}[b]{0.46\textwidth}
    \includegraphics[width=1\textwidth]{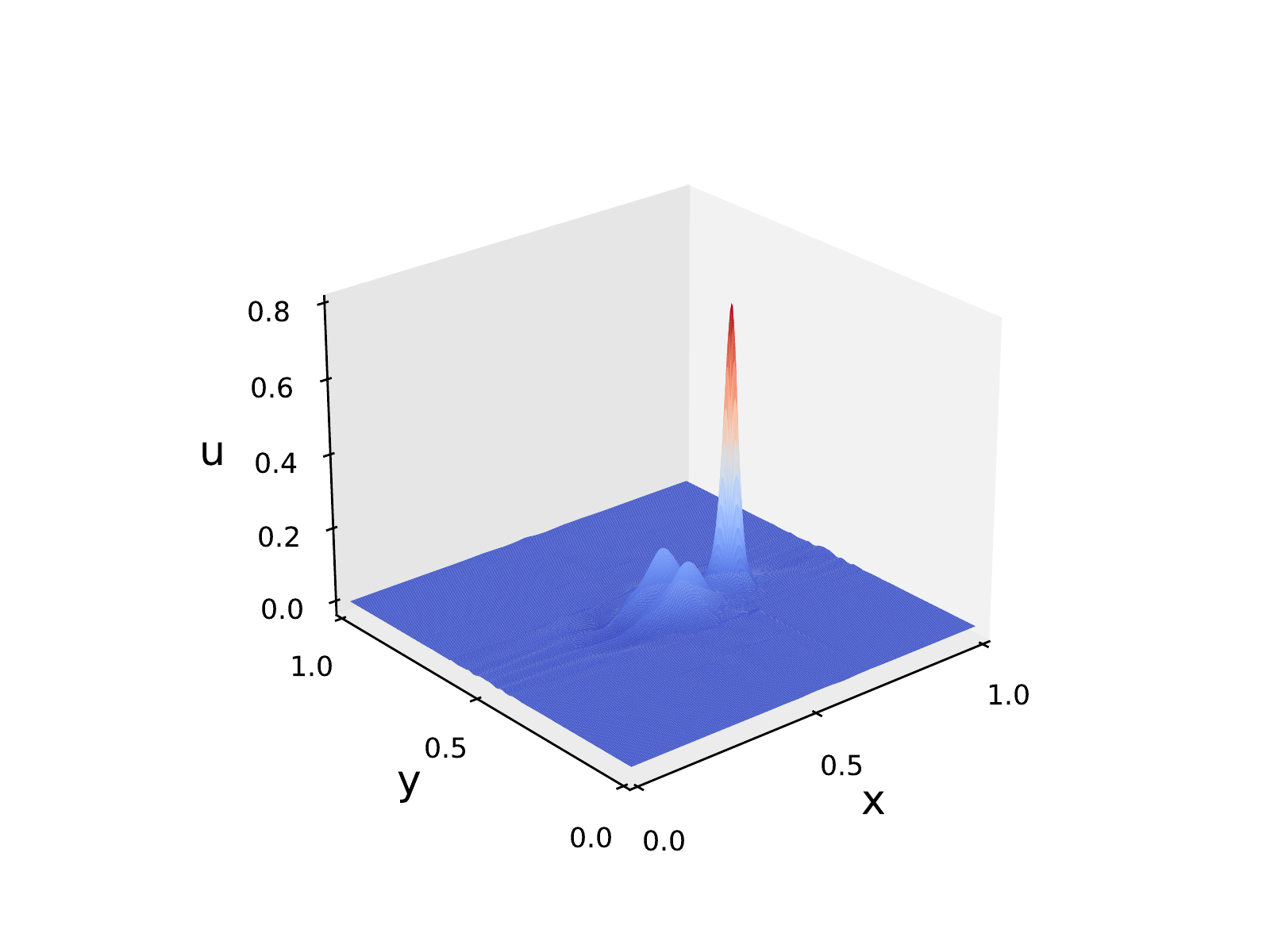}
    \end{minipage}
    }
    \subfigure[active elements at $t=0.2$]{
    \begin{minipage}[b]{0.46\textwidth}
    \includegraphics[width=1\textwidth]{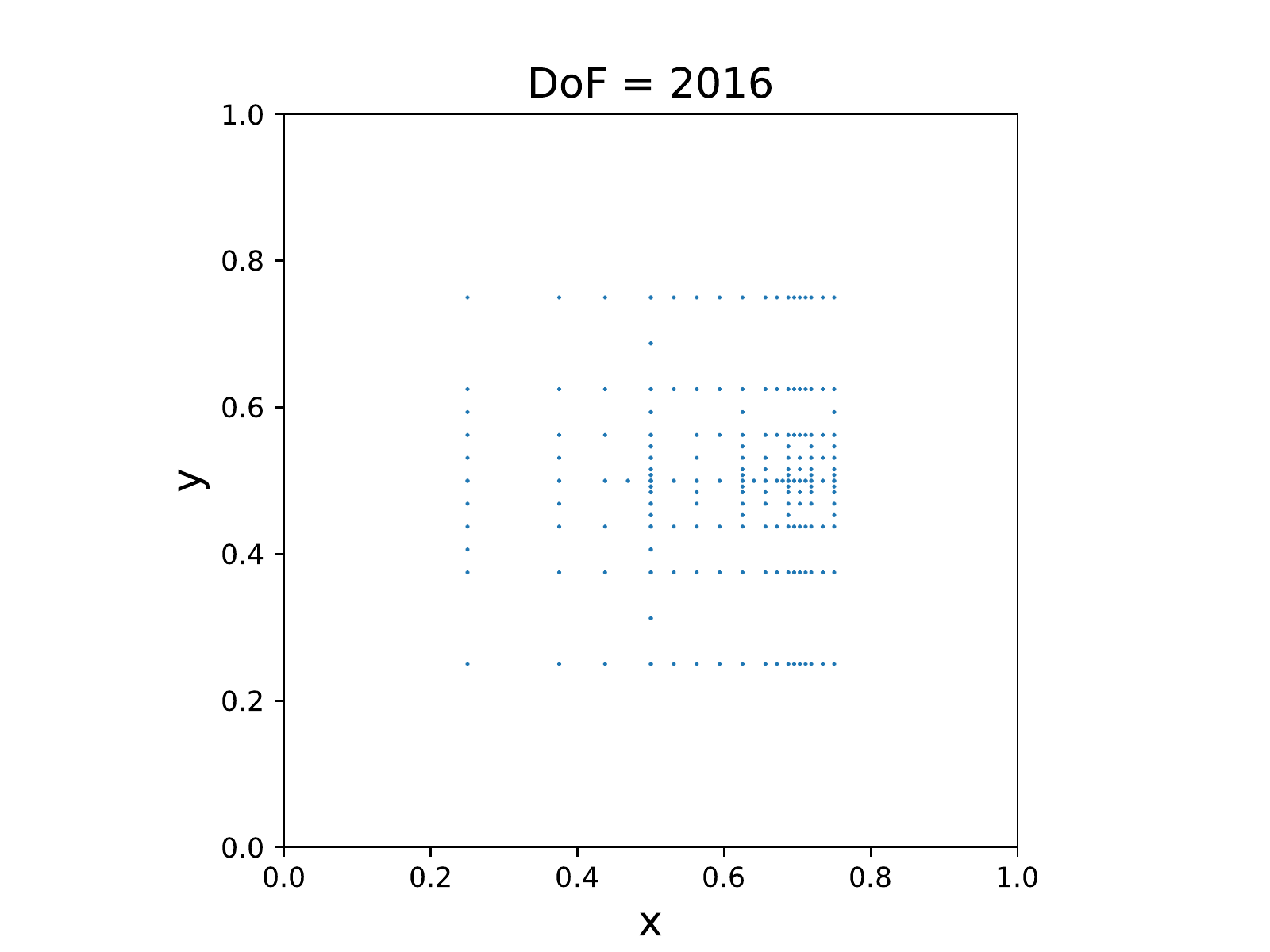}
    \end{minipage}
    }
    \caption{Example \ref{exam:zk-soliton}: ZK equation, lump solitons. $t=0$ and 0.2. $N=8$ and $\epsilon=10^{-4}$. Left: numerical solutions; right: active elements.}
    \label{fig:zk-lump}
\end{figure}

\section{Conclusion}\label{sec:conclusion}

In this work, we propose a class of adaptive multiresolution ultra-weak DG methods for solving dispersive equations including the KdV equation in one dimension and the ZK equation in two dimension. In particular, we propose a new ultra-weak DG method for the ZK equation. We prove the $L^2$ stability of the scheme and the optimal error estimate by a carefully designed local projection. Numerical examples are presented to illustrate the accuracy and capability of capturing the soliton waves. The code generating the results in this paper can be found at the GitHub link: \url{https://github.com/JuntaoHuang/adaptive-multiresolution-DG}.

\begin{appendices}

\section{Proof of Proposition \ref{prop:projection}}\label{sec:appendix-projection}

In this appendix, we present the proof of Proposition \ref{prop:projection}. We first prove the existence and uniqueness of the projection and then prove the approximation property.

Note that the procedure to find $\Pi^{\star}u\in Q^k(K_{i,j})$ is to solve a linear system with a square matrix, so the existence and uniqueness are equivalent. Thus, we only need to prove the uniqueness of the projection $\Pi^{\star}$. By assuming that $u(x,y)=0$, we would like to prove that $\Pi^{\star}u=0$.

Notice that $(\Pi^{\star}u)_y(x,y_{\jl}^+)\in P^{k}(I_i)$. Then, \eqref{proj2}, \eqref{proj6} and the existence and uniqueness of the 1D right Gauss-Radau projection \cite{castillo2002optimal} on $I_i$ implies
\begin{equation}\label{eq:Gauss-Radau-1D-right}
	(\Pi^{\star}u)_y(x,y_{\jl}^+)=0,\quad \forall x \in I_i.	
\end{equation}
Similarly, \eqref{proj3}, \eqref{proj5} and the existence and uniqueness of the 1D left Gauss-Radau projection \cite{castillo2002optimal} on $I_i$ implies
\begin{align}\label{eq:Gauss-Radau-1D-left}
\Pi^{\star}u(x,y_{\jr}^-)=0, \quad \forall x\in I_i.
\end{align}
By using \eqref{eq:Gauss-Radau-1D-right} and \eqref{eq:Gauss-Radau-1D-left}, we can write $\Pi^{\star}u(x,y)$ in the following formulation
\begin{equation}\label{eq:decompose-u-G}
	\Pi^{\star}u(x,y)=\int_y^{y_{\jr}}(s-y_{\jl})G(x,s)\, ds	
\end{equation}
for some $G(x,s)\in P^{k}(I_i)\otimes P^{k-2}(J_j)$.

Plugging in \eqref{eq:decompose-u-G} into \eqref{proj4}, we have
\begin{align}\label{equ1}
&\int_{y_{\jl}}^{y_{\jr}}\int_y^{y_{\jr}}(s-y_{\jl})G(x_{\il}^+,s)\, ds \varphi(y)\, dy\\
=&\int_{y_{\jl}}^{y_{\jr}}\int_{y_{\jl}}^y\varphi(s)\, ds(y-y_{\jl})G(x_{\il}^+,y)\,dy\nonumber \\
=&0,\quad \forall \varphi(y)\in P^{k-2}(J_j)\nonumber
\end{align}
Taking $\int_{y_{\jl}}^y\varphi(s)\, ds=(y-y_{\jl})G(x_{\il}^+,y)\in P^{k-1}(J_j)$ in \eqref{equ1}, we immediately get
\begin{align}
G(x_{\il}^+,y)=0, \quad \forall y\in J_j.
\end{align}
Therefore, we can make further decomposition of $G(x,y)$ in \eqref{eq:decompose-u-G} and have
\begin{equation}\label{eq:decompose-u-H}
	\Pi^{\star}u(x,y)=\int_y^{y_{\jr}}(s-y_{\jl})(x-x_{\il})H(x,s)\, ds,
\end{equation}
for some $H(x,s)\in P^{k-1}(I_i)\otimes P^{k-2}(J_j)$.

Finally, plugging in \eqref{eq:decompose-u-H} into \eqref{proj1}, we have
\begin{align}\label{equ2}
&\int_{x_{\il}}^{x_{\ir}}\int_{y_{\jl}}^{y_{\jr}}\int_y^{y_{\jr}}(s-y_{\jl})(x-x_{\il})H(x,s)\, dsv(x,y)\, dy dx\\
=&\int_{x_{\il}}^{x_{\ir}}\int_{y_{\jl}}^{y_{\jr}}\int_{y_{\jl}}^{y}v(x,s)\,ds(y-y_{\jl})(x-x_{\il})H(x,y)\,dy dx\nonumber\\
=&0 \quad \forall v(x,y)\in P^{k-1}(I_i)\otimes P^{k-2}(J_j). \nonumber
\end{align}
Taking $\int_{y_{\jl}}^{y}v(x,s)\,ds=(y-y_{\jl})H(x,y)\in P^{k-1}(I_i)\otimes P^{k-1}(J_j)$ in \eqref{equ2}, we have $H(x,y)\equiv0.$
Therefore $\Pi^{\star}u(x,y) \equiv0$. We finish the proof of the existence and uniqueness of the projection $\Pi^{\star}$.

We now turn to the proof of the approximation property. Note that the projection $\Pi^{\star}$ is a local projection on $K_{i,j}$. Thus, we do the standard scaling arguments $\xi:=\frac{2(x-x_i)}{h_{x_i}}$ and $\eta:=\frac{2(y-y_j)}{h_{y_j}}$ and only consider the reference cell $[-1,1]\times[-1,1]$. We denote
\begin{equation}\label{eq:scale}
	\Pi^{\star}u(x,y)=\sum_{m=1}^{(k+1)^2}a_mL_m(\xi,\eta),
\end{equation}
where $\{L_m\}_{m=1}^{(k+1)^2}$ is a set of basis functions of $Q^k([-1,1]\times[-1,1)$, e.g., $\{L_m\}_{m=1}^{(k+1)^2}=\{1,\xi,\eta,\ldots,\xi^{k}\eta^{k}\}$. We collect the coefficients in \eqref{eq:scale} in a vector $\bm{a}=(a_1,a_2,\ldots,a_{(k+1)^2})^T$. From the existence and uniqueness of the projection, we can solve a square linear system
\begin{align}
\mathbf{A}\bm{a}=\bm{b}
\end{align}
to get $\bm{a}=\mathbf{A}^{-1}\bm{b}$.

Notice that each component of $\bm{b}$ is bounded by some norms of $u$:
\begin{align*}
\|\bm{b}\|_{l^{\infty}}&\leq C (\|\hat{u}(\xi,\eta)\|_{L^{\infty}([-1,1]\times[-1,1])}+\|\hat{u}_\eta(\xi,\eta)\|_{L^{\infty}([-1,1]\times[-1,1])})\\
&=C(\|u(x,y)\|_{L^{\infty}(K_{i,j})}+h_{y_j}\|u_y(x,y)\|_{L^{\infty}(K_{i,j})})
\end{align*}
where $\hat{u}(\xi,\eta):=u(\frac{1}{2}h_{x_i}\xi+x_i,\frac{1}{2}h_{y_j}\eta+y_j)$. Moreover, $\mathbf{A}$ only depends on the constant $k$. We have
\begin{equation}
\|\bm{a}\|_{l^{\infty}}\leq C(\|u(x,y)\|_{L^{\infty}(K_{i,j})}+h_{y_j}\|u_y(x,y)\|_{L^{\infty}(K_{i,j})})
\end{equation}
and thus
\begin{align}
\|\Pi^{\star}u(x,y)\|_{L^{\infty}(K_{i,j})}\leq C(\|u(x,y)\|_{L^{\infty}(K_{i,j})}+h_{y_j}\|u_y(x,y)\|_{L^{\infty}(K_{i,j})}).
\end{align}
Since the projection is a local projection which preserves the polynomial up to degree $k$-th, the boundedness of the projection and standard approximation theory implies,
\begin{align}
\|\Pi^{\star}u(x,y)-u(x,y)\|_{L^2(K_{i,j})}\leq C h^{k+1}\|u\|_{H^{k+1}(K_{i,j})}.
\end{align}

\end{appendices}

\newpage
\bibliographystyle{abbrv}
\bibliography{ref_dispersive}

\end{document}